\documentclass{amsart}
\usepackage{amsmath, amsthm, amssymb,color}
\usepackage{url}
\usepackage{hyperref}


\newtheorem{theorem}{Theorem}
\newtheorem{proposition}[theorem]{Proposition}

\newtheorem{corollary}[theorem]{Corollary}
\newtheorem{lemma}[theorem]{Lemma}

\theoremstyle{definition}

\numberwithin{theorem}{section}
\numberwithin{equation}{section}

\newcommand{\F}{\mathbb{F}}
\newcommand{\FF}{\mathbb{F}}
\newcommand{\QQ}{\mathbb{Q}}

\newcommand{\USp}{\mathrm{USp}}
\newcommand{\U}{\mathrm{U}}

\newcommand{\M}{\mathrm{M}}

\newcommand{\dd}{\;\mathrm{d}}

\newcommand{\divides}{\mathbin{|}}


\DeclareMathOperator{\Tr}{tr}
\DeclareMathOperator{\re}{Re}

\title[Traces, high powers and one level density]{Traces, high powers and one level density for families of curves over finite fields}
\author[A. Bucur]{Alina Bucur}
\address[Alina Bucur]{Department of Mathematics\\
University of California, San Diego\\
9500 Gilman Drive \#0112\\
La Jolla, CA 92093\\
U.S.A}
\thanks{The first author was partially supported by Simons Foundation grant \#244988.}
\email{alina@math.ucsd.edu}

\author[E. Costa]{Edgar Costa}
\address[Edgar Costa]{Department of Mathematics \\
Dartmouth College \\
27 N Main Street \\
6188 Kemeny Hall\\
Hanover, NH 03755-3551\\
U.S.A}
\thanks{The second author was partially supported by FCT doctoral grant SFRH/BD/69914/2010.}
\email{edgarcosta@math.dartmouth.edu}

\author[C. David]{Chantal David}
\address[Chantal David]{
    Concordia University\\
    1455 de Maisonneuve West\\
    Montr\'{e}al, Qu{e}bec\\
Canada H3G 1M8}
\email{cdavid@mathstat.concordia.ca}
\thanks{The third author was partially supported by a NSERC Discovery Grant 155635-2013.}

\author[J. Guerreiro]{Jo\~{a}o Guerreiro}
\address[Jo\~{a}o Guerreiro]{Department of Mathematics \\
Columbia University \\
Rm 509, MC 4406 \\
2900 Broadway \\
New York, NY 10027 \\
U.S.A}
\thanks{The fourth author was partially supported by FCT doctoral grant SFRH/BD/68772/2010.}
\email{guerreiro@math.columbia.edu}

\author[D. Lowry-Duda]{David Lowry-Duda}
\address[David Lowry-Duda]{Department of Mathematics\\
Brown University\\
151 Thayer Street, Box 1917\\
Providence, RI 02912\\
U.S.A}
\thanks{The fifth author was supported by the National Science Foundation under Grant No. DGE 0228243.}
\email{djlowry@math.brown.edu}

\begin{document}

\begin{abstract}
    The zeta function of a curve $C$ over a finite field may be expressed in terms of the characteristic polynomial of a unitary matrix $\Theta_C$.
    We develop and present a new technique to compute the expected value of $\Tr(\Theta_C^n)$ for various moduli spaces of curves of genus $g$ over a fixed finite field in the limit as $g$ is large, generalizing and extending the work of Rudnick~\cite{rudnick} and Chinis~\cite{chinis}.
    This is achieved by using function field zeta functions, explicit formulae, and the densities of prime polynomials with prescribed ramification types at certain places as given in~\cite{BDFKLOW} and~\cite{Zhao}.
    We extend~\cite{BDFKLOW} by describing explicit dependence on the place and give an explicit proof of the Lindel\"{o}f bound for function field Dirichlet $L$-functions $L(1/2 + it, \chi)$.
    As applications, we compute the one-level density for hyperelliptic curves, cyclic $\ell$-covers, and cubic non-Galois covers.
\end{abstract}

\maketitle
\section{Introduction and statement of results}

Let $\mathbb{F}_q$ be a finite field of odd cardinality, and let $C$ be a smooth curve over $\mathbb{F}_q$.
The Weil conjectures tell us that the Hasse-Weil zeta function has the form
\begin{equation*}
  Z_C (u)
  :=
  \exp\left(\sum_{n = 1}^\infty \#C(\mathbb{F}_{q^n}) \frac{u^n}{n} \right) =  \frac{P_C(u)}{(1-u)(1-qu)},
\end{equation*}
where
\begin{equation*}
  P_C(u)
  :=
  \det \bigl(1 - u \operatorname{Frob}| H^{1} _{\text{et}} \bigl( C\bigl( \overline{\FF_q} \bigr), \mathbb{Q}_\ell \bigr) \bigr) \in \mathbb{Z}[u]
\end{equation*}
is the characteristic polynomial of the Frobenius automorphism, whose roots have absolute value $q^{-1/2}$ and are stable (as a multiset) under complex conjugation.
Furthermore, $P_C( u)$ corresponds to a unique conjugacy class of a unitary symplectic matrix $\Theta_C \in \USp(2g)$ such that the eigenvalues $e^{i \theta_j}$ correspond to the zeros $q^{-1/2} e^{i \theta_j}$ of $P_C(u)$.
This conjugacy class $\Theta_C$ is called \emph{Frobenius class} of $C$.

For many different families of curves $C$, Katz and Sarnak~\cite{katz-sarnak} showed that as $q \to \infty$, the Frobenius classes $\Theta_C$ become equidistributed in certain subgroups of unitary matrices, where the group depends on the monodromy group of the family of curves.
Stated more precisely, suppose $\mathcal{F}(g,q)$ is a natural family of curves of genus $g$ over $\mathbb{F}_q$ with symmetry type $\M(2g)$, equipped with the Haar measure.
The expected value of a function $F$ evaluated on the eigenangles of curves in $\mathcal{F}(g,q)$ is defined as
\begin{equation*}
    \left\langle F(\Theta_C) \right\rangle _{\mathcal{F}(q,g)}
    :=
    \frac{1}{\#\mathcal{F}(q,g)}\sum_{C \in \mathcal{F}(q,g)} F(C).
\end{equation*}
Katz and Sarnak predicted that
\begin{equation*}
    \lim_{q \to \infty} \left\langle F(\Theta_C) \right\rangle_{\mathcal{F}(q,g)}
    =
    \int_{\M(2g)} F(U) \dd U,
\end{equation*}
where the integral is taken with the respect to the Haar measure.
This means that many statistics of the eigenvalues can be computed, in the limit, as integrals over the corresponding unitary monodromy groups.

One particularly important and well-studied statistic is the {one}-level density, which concerns low-lying zeroes. The definition of the one-level density $W_f(U)$ of a  $N \times N$ unitary matrix $U$ and with test function $f$ in the function field setting is given by~\eqref{def-one-level} in Section~\ref{sec:hyperelliptic}.

The work of Katz and Sarnak concerns the $q$-limit.
Recently, there has been work exploring another type of limit, examined by fixing a constant finite field $\F_q$ and looking
at statistics of families of curves as their genus $g \rightarrow \infty$, such as the work of Kurlberg and Rudnick~\cite{KURU} who first investigated that type
of limit for the distribution of $\Tr\bigl(\Theta_C\bigr)$ for the family of hyperelliptic curves. The statistics are then given by a sum of $q+1$ independent and identically distributed random
variables, and not as distributions in groups of random matrices.
In a subsequent work,
Rudnick~\cite{rudnick} investigated the distribution of  $\Tr\bigl(\Theta_C ^n\bigr)$ for the same family of hyperelliptic curves.
Denote by $\mathcal{F}_{2g + 1}$ the family of hyperelliptic curves of genus $g$ given in affine form by
\begin{equation*}
    C: Y^2 = Q(X)
\end{equation*}
where $Q(X)$ is a square-free, monic polynomial of degree $2g+1$.
Rudnick showed that the $g$-limit statistics for
trace of high powers   $ \Tr \bigl( \Theta_C ^n \bigr)$ over the family $\mathcal{F}_{2g + 1}$
agrees (for $n$ in a certain range) with the corresponding statistics over $\USp(2g)$ given by:
\begin{equation}
    \label{eqn:usp2g}
    \int_{\USp(2g)} \Tr U^n \dd U
    =
    \begin{cases}
        2g & n = 0,\\
        - \eta_n & 1 < |n| < 2g,\\
        0 & |n| > 2g.
    \end{cases}
\end{equation}
where
\begin{equation} \label{etan}
    \eta_n
    =
    \begin{cases}
        1 & n \text{ even},\\
        0 & n \text{ odd}.
    \end{cases}
\end{equation}

More precisely:
\begin{theorem}{\cite{rudnick}}
    \label{thm:rudnick}
    For all $n > 0$
    \begin{equation*}
        \left\langle
            \Tr \Theta_C ^n
        \right\rangle_{ \mathcal{F}_{2g + 1} }
        =
        \eta_n q^{-\frac{n}{2}} \sum_{\deg v \divides \frac{n}{2}} \frac{\deg v}{q^{\deg v} + 1}
        + O\bigl( g q^{-g} \bigr)
        +
        \begin{cases}
            - \eta_n                        & 0 < n < 2g, \\
            -1 - \frac{1}{q - 1}            & n = 2g, \\
            O\bigl( n q^{\frac{n}{2} - 2g} \bigr) & n > 2g,
        \end{cases}
    \end{equation*}
    where the sum is over all finite places of $\mathbb{F}_q[X]$.

    Furthermore, if $3 \log_q g < n < 4 g - 5 \log_q g$ and $n \neq 2g$, then
    \begin{equation*}
        \left \langle \Tr \Theta_C ^n \right \rangle = \int_{\USp(2g)} \Tr U^n \dd U + o\left( \frac{1}{g} \right).
    \end{equation*}
    Moreover, if $f$ is an even test function in the Schwartz space $\mathcal{S}(\mathbb{R})$ with Fourier transform $\hat{f}$ supported in $(-2,2)$, then
    \begin{equation*}
      \left\langle W_f \right\rangle_{\mathcal{F}_{2g+1}} = \int_{\USp(2g)} W_f(U) \dd U + \frac{\operatorname{dev}(f)}{g} + o\left( \frac{1}{g} \right),
    \end{equation*}
    where
    \begin{equation*}
        \operatorname{dev}(f) = \hat{f}(0)\sum_v \frac{\deg v}{q^{2 \deg v} - 1} - \hat{f}(1)\frac{1}{q-1}
    \end{equation*}
    and the sum is over all finite places of $\mathbb{F}_q[X]$.
\end{theorem}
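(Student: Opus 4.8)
The plan is to express all three assertions through the single character average $\langle\chi_Q(f)\rangle_{\mathcal{F}_{2g+1}}$, and to separate the perfect-square polynomials $f$, which carry the main terms, from the non-squares, whose contribution is an error controlled by the Riemann Hypothesis for function field $L$-functions. For $C\colon Y^2=Q(X)$ with $Q$ monic squarefree of degree $2g+1$, the numerator of the zeta function is the Dirichlet $L$-function $P_C(u)=L(u,\chi_Q)=\sum_{f\text{ monic}}\chi_Q(f)u^{\deg f}=\prod_v(1-\chi_Q(v)u^{\deg v})^{-1}$, a polynomial of degree $2g$ obeying the functional equation $L(u,\chi_Q)=q^g u^{2g}L(1/(qu),\chi_Q)$, where $\chi_Q$ is the quadratic character attached to $Q$ and $P_C(u)=\prod_{j=1}^{2g}(1-q^{1/2}e^{i\theta_j}u)$. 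Comparing the coefficient of $u^n$ in $-u\,\tfrac{d}{du}\log$ of these two descriptions gives the exact identity $\Tr\Theta_C^n=-q^{-n/2}\sum_{\deg f=n}\Lambda(f)\chi_Q(f)$, with $\Lambda$ the von Mangoldt function on $\F_q[X]$; averaging over $\mathcal{F}_{2g+1}$ and interchanging the two finite sums leaves $\langle\Tr\Theta_C^n\rangle=-q^{-n/2}\sum_{\deg f=n}\Lambda(f)\langle\chi_Q(f)\rangle_{\mathcal{F}_{2g+1}}$.

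For the main term I would isolate the perfect squares; when $n$ is odd there are none, explaining the absence of a main term. When $n=2m$, the square terms are $f=P^{2k}$ with $k\deg P=m$, where $\Lambda(f)=\deg P$ and $\chi_Q(P^{2k})$ is the indicator of $(Q,P)=1$, so $\langle\chi_Q(P^{2k})\rangle_{\mathcal{F}_{2g+1}}$ is the proportion of monic squarefree $Q$ of degree $2g+1$ coprime to $P$. Using $\sum_{h\text{ sqfree}}u^{\deg h}=(1-qu^2)/(1-qu)$ and its coprime-to-$P$ analogue, that proportion equals $\tfrac{q^{\deg P}}{q^{\deg P}+1}+O(\deg P\cdot q^{-2g})$; together with $\tfrac{q^{\deg P}}{q^{\deg P}+1}=1-\tfrac1{q^{\deg P}+1}$ and the exact prime polynomial theorem $\sum_{d\divides m}d\,\pi_q(d)=q^m$, the square contribution comes out to $\eta_n\bigl(q^{-n/2}\sum_{\deg v\divides n/2}\tfrac{\deg v}{q^{\deg v}+1}-1\bigr)$ with error $O(nq^{-2g})$, for every $n>0$; this is the displayed main term.

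For the non-square $f$, quadratic reciprocity rewrites $\langle\chi_Q(f)\rangle_{\mathcal{F}_{2g+1}}$ — up to explicit sign factors depending on degrees and leading coefficients — as an average of a Legendre-type symbol over squarefree $Q$; expanding the squarefree condition by M\"obius turns this into a combination of short character sums $\sum_{\deg A=\ell}\bigl(\tfrac{f}{A}\bigr)$, each a coefficient of the $L$-polynomial of the quadratic character attached to the squarefree part of $f$, twisted by finitely many Euler factors. When $n<2g$ that $L$-polynomial has degree strictly below the summation range forced by $\deg Q=2g+1$, so the Weil/Riemann Hypothesis bound on its coefficients, summed over $Q$, gives the error $O(gq^{-g})$. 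When $n=2g$ the conductor degree equals $2g$, the summation range just reaches the top coefficient of the $L$-polynomial, and the functional equation — accounting for the parity of the conductor degree — forces that coefficient to contribute a genuine constant; isolating it produces the extra $-\tfrac1{q-1}$, so $\langle\Tr\Theta_C^{2g}\rangle=-1-\tfrac1{q-1}+O(gq^{-g})$. When $n>2g$ one is reading Taylor coefficients past $\deg P_C=2g$, and combining the trivial bound $|\Tr\Theta_C^n|\le 2g$ with the Newton recursion, the functional equation, and the Riemann Hypothesis estimates for the twisted $L$-functions gives $O(nq^{n/2-2g})$. I expect this step to be the main obstacle: keeping precise track of the degrees of the twisted $L$-polynomials and of the functional-equation boundary term, so as to land exactly on $-1-\tfrac1{q-1}$ at $n=2g$.

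Both corollaries follow by substitution. For the matrix-integral statement, insert $3\log_q g<n<4g-5\log_q g$ with $n\ne 2g$: the secondary main term is $q^{-n/2}\sum_{\deg v\divides n/2}\tfrac{\deg v}{q^{\deg v}+1}\ll q^{-n/2}(n/2)^{o(1)}\ll g^{-3/2+o(1)}$, the error $O(gq^{-g})$ is $o(1/g)$, and for $n>2g$ one has $nq^{n/2-2g}\ll g\,q^{-(5/2)\log_q g}=g^{-3/2}$; comparing with~\eqref{eqn:usp2g} then gives $\langle\Tr\Theta_C^n\rangle=\int_{\USp(2g)}\Tr U^n\,dU+o(1/g)$. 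For the one-level density I would feed the trace formula into the explicit formula $W_f(\Theta_C)=\hat f(0)+\tfrac1g\sum_{0<n<4g}\hat f(\tfrac n{2g})\Tr\Theta_C^n$, which holds since $\hat f$ is even with support in $(-2,2)$ and $\Tr\Theta_C^{-n}=\Tr\Theta_C^n$: the $-\eta_n$ pieces reassemble $\hat f(0)+\tfrac1g\sum_{0<n\le 2g}\hat f(\tfrac n{2g})(-\eta_n)=\int_{\USp(2g)}W_f(U)\,dU$; the secondary main term, after replacing $\hat f(\tfrac n{2g})$ by $\hat f(0)+O(n/g)$ and summing $\sum_{k\ge1}q^{-k\deg v}=(q^{\deg v}-1)^{-1}$, contributes $\tfrac{\hat f(0)}{g}\sum_v\tfrac{\deg v}{q^{2\deg v}-1}+o(1/g)$; the $n=2g$ term leaves the residue $-\tfrac{\hat f(1)}{g(q-1)}$; and the errors $O(gq^{-g})$ and $O(nq^{n/2-2g})$ sum to $o(1/g)$, the latter since $\hat f$, being smooth with support compactly inside $(-2,2)$, vanishes once $n/2g$ is near $2$. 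Collecting terms yields $\langle W_f\rangle_{\mathcal{F}_{2g+1}}=\int_{\USp(2g)}W_f(U)\,dU+\operatorname{dev}(f)/g+o(1/g)$.
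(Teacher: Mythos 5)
Your proposal reconstructs, essentially correctly, Rudnick's original argument for this theorem; but note that the paper does not prove Theorem~\ref{thm:rudnick} at all --- it is quoted verbatim from \cite{rudnick} --- and when the paper proves the analogous statements for the full moduli space $\mathcal{H}_g$ (Theorems~\ref{thm:moment-hyperelliptic}, \ref{cor:moment-hyperelliptic} and~\ref{thm:1level-density-hyperelliptic}), it takes a genuinely different route. You expand $\Tr \Theta_C^n$ as a complete character sum $-q^{-n/2}\sum_{\deg f = n}\Lambda(f)\chi_Q(f)$, extract the main term from the perfect squares $f=P^{2k}$, and control non-square $f$ via the squarefree sieve and the Weil bound; the paper instead uses the explicit formula only at the level of splitting types of places (Lemma~\ref{lemma:hyperelliptic}) and then inputs the densities of quadratic extensions with prescribed local behavior (Theorem~\ref{thm:density-hyperelliptic}, obtained from \cite{BDFKLOW} via Tauberian theorems together with the explicit Lindel\"{o}f bound of Section~\ref{sec:lindelof}). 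The trade-off is real: your (Rudnick's) route reaches $n$ up to $4g$ and test functions with $\operatorname{supp}\hat f\subset(-2,2)$, and captures the transition at $n=2g$ and hence the $\hat f(1)$ term in $\operatorname{dev}(f)$; the paper's route only reaches $n<2g(1-\varepsilon')$ and $\operatorname{supp}\hat f\subset(-1,1)$ and so never sees the $\hat f(1)$ deviation, but it is simpler and generalizes to cyclic $\ell$-covers and cubic non-Galois covers. One caution: the step you yourself flag as the main obstacle --- the non-square contribution for $n\ge 2g$, where the exact constant $-1-\frac{1}{q-1}$ at $n=2g$ and the bound $O(nq^{n/2-2g})$ for $n>2g$ must emerge from the boundary of the sieve and the functional equation --- is genuinely the delicate point of Rudnick's proof and is only sketched here, not carried out. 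Everything else (the square computation via $\sum_{d\mid m}d\,\pi(d)=q^m$, the range checks for $3\log_q g<n<4g-5\log_q g$, and the assembly of $\operatorname{dev}(f)$ from the secondary main term and the $n=2g$ residue) checks out.
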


We remark that the bias towards having more points over $\F_{q^n}$ whenever $n$ is even (and in a certain range with respect to $g$), which follows from the symplectic symmetry, was first pointed out by Brock and Granville~\cite{bg}.

The results of Rudnick~\cite{rudnick} hold for statistics over the space  $\mathcal{F}_{2g + 1}$, which is only a subset of the moduli space of hyperelliptic curves of genus $g$,
$\mathcal{H}_g$ (cf. Section~\ref{sec:hyperelliptic}).
The statistics for the whole moduli space of hyperelliptic curves of genus $g$, $\mathcal{H}_g$, were obtained by Chinis
\cite{chinis}, and they differ slightly from the statistics for $\mathcal{F}_{2g + 1}$.
\begin{theorem}{\cite{chinis}}
    \label{thm:chinis}
    For $n$ odd,
    \begin{equation*}
        \left\langle \Tr\bigl(\Theta_C^n\bigr) \right\rangle _{\mathcal{H}_g} = 0,
    \end{equation*}
    and for $n$ even,
    \begin{equation*}
        \left\langle \Tr \bigl( \Theta_C^n \bigr) \right\rangle_{\mathcal{H}_g}
        =
        q^{-\frac{n}{2}}  \sum_{\substack{{\deg v \divides\frac{n}{2}}\\{\deg v \neq 1}}} \frac{\deg v}{q^{\deg v}+1}
        + O\bigl( g q^{\frac{-g}{2}} \bigr)
        +
        \begin{cases}
            -1                                  & 0 < n < 2g,\\
            -1-\frac{1}{q^2-1}                  & n = 2g,\\
            O\bigl( nq^{\frac{n}{2}-2g} \bigr) & 2g < n,
        \end{cases}
    \end{equation*}
    where the sum is over all finite places of $\mathbb{F}_q[X]$.
\end{theorem}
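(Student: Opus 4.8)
The plan is to treat the two parities separately. For $n$ odd the vanishing is forced by quadratic twist symmetry: fix a nonsquare $c \in \F_q^\times$; for squarefree $D$ the twist $C^{(c)} \colon Y^2 = c D(X)$ is a hyperelliptic curve of genus $g$, and from $\chi_{cD}(f) = \chi_2(c)^{\deg f} \chi_D(f)$ (with $\chi_2$ the quadratic character of $\F_q$) one gets $\mathcal{L}(u, \chi_{cD}) = \mathcal{L}(\chi_2(c) u, \chi_D)$, so the reciprocal roots of $P_{C^{(c)}}$ are $\chi_2(c)$ times those of $P_C$ and hence $\Tr(\Theta_{C^{(c)}}^n) = \chi_2(c)^n \Tr(\Theta_C^n)$. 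Now $\mathcal{H}_g$ is stable under the involution $C \mapsto C^{(c)}$ --- a constant twist does not change the automorphism group --- and this involution negates $\Tr(\Theta_C^n)$ when $n$ is odd, so $\sum_{C \in \mathcal{H}_g} \Tr(\Theta_C^n)$ vanishes on every twist-orbit (a fixed point $C \cong C^{(c)}$ forces $\Tr(\Theta_C^n) = -\Tr(\Theta_C^n) = 0$). Hence $\langle \Tr(\Theta_C^n) \rangle_{\mathcal{H}_g} = 0$ exactly; note that $\mathcal{F}_{2g+1}$ is not twist-stable, which is why Theorem~\ref{thm:rudnick} yields only $O(g q^{-g})$ there.

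For $n$ even I would parametrize $\mathcal{H}_g$ by the branch locus. A genus $g$ hyperelliptic curve is a double cover of $\mathbb{P}^1$ branched over a squarefree effective divisor $B$ of degree $2g+2$, with isomorphic curves corresponding to $\mathrm{PGL}_2(\F_q)$-equivalent divisors together with a quadratic twist. Since $\Tr(\Theta_C^n)$ is invariant under $\mathrm{PGL}_2(\F_q)$ and, for $n$ even, under quadratic twist, and since the $\mathrm{PGL}_2(\F_q)$-action on branch divisors is free away from the $O(q^{g + O(1)})$ curves with automorphism group strictly larger than $\ZZ/2\ZZ$, averaging over $\mathcal{H}_g$ reduces to averaging over branch divisors:
\[
  \left\langle \Tr(\Theta_C^n) \right\rangle_{\mathcal{H}_g} = \frac{1}{\#\{B\}} \sum_B \Tr(\Theta_{C_B}^n) + O(g q^{-g/2}),
\]
with $C_B$ a double cover branched over $B$. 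Splitting according to whether $B$ contains $\infty$: those $B$ containing $\infty$ give the curves $Y^2 = D'(X)$ with $D'$ monic squarefree of degree $2g+1$, i.e.\ Rudnick's $\mathcal{F}_{2g+1}$, and the remaining ones give $Y^2 = D(X)$ with $D$ monic squarefree of degree $2g+2$ (so $\infty$ splits), a family $\mathcal{F}_{2g+2}$ with $\#\mathcal{F}_{2g+2} = q \cdot \#\mathcal{F}_{2g+1}$. Therefore
\[
  \left\langle \Tr(\Theta_C^n) \right\rangle_{\mathcal{H}_g} = \frac{1}{q+1} \left\langle \Tr(\Theta_C^n) \right\rangle_{\mathcal{F}_{2g+1}} + \frac{q}{q+1} \left\langle \Tr(\Theta_C^n) \right\rangle_{\mathcal{F}_{2g+2}} + O(g q^{-g/2}).
\]

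The first average is Theorem~\ref{thm:rudnick}, so it remains to evaluate the second. For $D \in \mathcal{F}_{2g+2}$ the $L$-function satisfies $\mathcal{L}(u, \chi_D) = (1-u) P_C(u)$ (the factor $1-u$ is the trivial zero of the even character $\chi_D$), so the explicit formula gives $q^{n/2} \Tr(\Theta_C^n) = -\psi_n(\chi_D) - 1$ where $\psi_n(\chi) = \sum_{\deg f = n} \Lambda(f) \chi(f)$. One averages $\psi_n$ over $D \in \mathcal{F}_{2g+2}$ by the character-sum analysis used for Theorem~\ref{thm:rudnick} (resting on the prime densities of \cite{BDFKLOW} and \cite{Zhao}): for $0 < n < 2g$ the only surviving main term comes from $f$ a perfect square, for which $\chi_D(f)$ equals $1$ or $0$ according as $D$ is coprime to $f$ or not, so $\langle \chi_D(f) \rangle$ is the density $\prod_{P \divides f} \frac{|P|}{|P|+1}$ of such $D$ up to a negligible error; since the perfect-square $f$ of degree $n$ with $\Lambda(f) \neq 0$ are the prime powers $P^{2j}$ with $\deg P \divides n/2$, collecting these and using $\sum_{\deg v \divides m} \deg v = q^m$ (sum over finite places of $\F_q[X]$) gives $\langle \psi_n(\chi_D) \rangle = q^{n/2} - \sum_{\deg v \divides n/2} \frac{\deg v}{q^{\deg v}+1}$, with error $O(g q^{-g/2})$ after division by $q^{n/2}$. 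Thus for $0 < n < 2g$,
\[
  \left\langle \Tr(\Theta_C^n) \right\rangle_{\mathcal{F}_{2g+2}} = -1 + q^{-n/2} \sum_{\deg v \divides n/2} \frac{\deg v}{q^{\deg v}+1} - q^{-n/2} + O(g q^{-g/2}).
\]
Feeding this and Theorem~\ref{thm:rudnick} into the weighted combination, and writing $S = \sum_{\deg v \divides n/2} \frac{\deg v}{q^{\deg v}+1}$, the result is $q^{-n/2} S - 1 - \frac{q}{q+1} q^{-n/2} + O(g q^{-g/2})$; the $q$ monic linear places contribute exactly $q \cdot \frac{1}{q+1} = \frac{q}{q+1}$ to $S$, so the extra term $-\frac{q}{q+1} q^{-n/2}$ cancels their contribution, leaving the stated formula with the sum restricted to $\deg v \neq 1$. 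At $n = 2g$ the term $-\frac{1}{q-1}$ of Theorem~\ref{thm:rudnick} (a functional-equation edge contribution, absent for $\mathcal{F}_{2g+2}$ since $2g < \deg D$) enters with weight $\frac{1}{q+1}$ and becomes $-\frac{1}{q^2-1}$; for $n > 2g$ both subfamilies contribute $O(n q^{n/2 - 2g})$.

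The one genuinely delicate point is the average of $\psi_n(\chi_D)$ over $\mathcal{F}_{2g+2}$: beyond the square-diagonal main term one must also pin down the secondary (``dual'') contributions for $\deg f$ near $2g$, arising from the functional equation of $\mathcal{L}(u, \chi_D)$ for the even character, and keep the total error small enough to survive multiplication by $q^{n/2}$ and summation over all monic $f$ of degree $n \leq 2g$ --- this is precisely where the place-explicit strengthening of the density estimates of \cite{BDFKLOW} is required. A more routine, bookkeeping-type obstacle is justifying the reduction in the second step, i.e.\ bounding the number of hyperelliptic curves over $\F_q$ with automorphism group strictly larger than $\ZZ/2\ZZ$ (equivalently, the branch divisors with nontrivial $\mathrm{PGL}_2(\F_q)$-stabilizer).
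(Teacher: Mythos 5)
Your argument is correct in outline, but it follows a genuinely different route from the one this paper takes. The paper does not reprove Chinis's theorem at full strength: Section~\ref{sec:hyperelliptic} derives a weaker version (Theorem~\ref{thm:moment-hyperelliptic} and Corollary~\ref{cor:moment-hyperelliptic}, with error term $O\bigl(q^{(\varepsilon-1)(g+1)+n(1+\varepsilon)}\bigr)$ and no treatment of $n \geq 2g$) by combining the explicit formula of Lemma~\ref{lemma:hyperelliptic} --- which expresses $-q^{n/2}\Tr\Theta_C^n$ as sums of $\deg v$ over split and inert places with $\deg v \divides n$ or $\deg v \divides n/2$ --- with the densities of quadratic extensions having prescribed splitting type at a given place (Theorem~\ref{thm:density-hyperelliptic}, i.e.\ \cite{BDFKLOW} made place-explicit via the Lindel\"{o}f bound of Section~\ref{sec:lindelof}); the split and inert main terms over $\deg v \divides n$ cancel, leaving exactly the inert contribution over $\deg v \divides n/2$. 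You instead split $\mathcal{H}_g$ into $\mathcal{F}_{2g+1}$ and $\mathcal{F}_{2g+2}$ with weights $\frac{1}{q+1}$ and $\frac{q}{q+1}$, quote Theorem~\ref{thm:rudnick} for the first, and rerun Rudnick's average of $\psi_n(\chi_D)$ for the second --- which is essentially Chinis's original proof. Your route buys the sharper error $O(gq^{-g/2})$, the exact vanishing for odd $n$ (your constant-twist involution is clean, and the paper's method only yields an $O(\cdot)$ bound for odd $n$), and the regimes $n \geq 2g$; the paper's route buys simplicity and, crucially for its purposes, a template that transfers verbatim to cyclic $\ell$-covers and cubic non-Galois covers. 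Two caveats on your side. First, the average of $\chi_D(f)$ over $D \in \mathcal{F}_{2g+2}$ for non-square prime powers $f$, including the dual contribution coming from the functional equation of $\mathcal{L}(u,\chi_D)/(1-u)$ when $\deg f$ is near $2g$, is the actual content of \cite{chinis} and does not rest on the density results of \cite{BDFKLOW} or \cite{Zhao} as you suggest; Rudnick's argument uses elementary character-sum bounds over squarefree moduli. Second, your claim that the $n=2g$ edge term is absent for $\mathcal{F}_{2g+2}$ ``since $2g < \deg D$'' is not justified by a degree count alone: it is equivalent to Chinis's value $-1-\frac{1}{q^2-1}$ at $n=2g$ given the rest of your computation, but establishing it requires the same dual-sum analysis at the top coefficients of $\mathcal{L}(u,\chi_D)$, which you have deferred rather than carried out.
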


It is interesting that when studying the distribution of zeta zeros for hyperelliptic curves Faifman and Rudnick~\cite{fr} can restrict to half of the moduli space (in this case, polynomials of even degree) without it affecting the result; but when one restricts to $\mathcal F_{2g+1}$ the one-level density is not quite the same as the one-level density on the whole moduli space $\mathcal H_g$. The difference is explained by the fact that the infinite place behaves differently in $\mathcal F_{2g+1}$ and $\mathcal F_{2g+2}$.
work of
The results of Rudnick were vastly generalized in a recent paper of Bui and Florea \cite{Bui-Florea}, which give formulas for the one-level density which are uniform in $q$ and $d$, and they can then identify lower order terms when the support of the test function holds in various ranges. For the one-level density of classical Dirichlet L-functions associated to quadratic characters, some recent work of Fiorilli, Parks and Sodergren~\cite{FPS} exhibits all the lower order terms which are descending powers of $\log{X}$.

We are interested in a different generalization, extending the statistics of Rudnick and Chinis
to statistics of families of curves for fixed $q$ and as $g$ varies.
In Section~\ref{sec:hyperelliptic}, we first present a new proof for Theorem~\ref{thm:chinis} using function field zeta functions and explicit formulae, specifically relying on densities of prime polynomials of different ramification types, as described in~\cite{BDFKLOW}.
Our technique is much simpler than what is used in \cite{rudnick} and \cite{Bui-Florea}, and the result presented in Section~\ref{sec:hyperelliptic} is  weaker than the results of Rudnick and Chinis (as our result holds for a more limited range of $n$), but it has the benefit of having clear generalization to many families of curves.
We present two such generalizations here.
In Section~\ref{sec:general-ell} we generalize the result from Section~\ref{sec:hyperelliptic} for cyclic $\ell$-covers curve, and in Section~\ref{sec:cubic_nongalois}, we do the same for cubic curves corresponding to non-Galois extensions. We summarize our main results in the following theorems, whose details can be found in Section~\ref{sec:general-ell}
and~\ref{sec:cubic_nongalois}. Throughout this paper, all explicit constants in the error terms can depend on $\ell$ and $q$.

\begin{theorem}
    \label{introthm:Hell} Let $\ell$ be an odd prime and let ${\mathcal{H}_{g,\ell}}$ be the moduli space of $\ell$ covers of genus $g$.
    For any $\varepsilon > 0$ and $n$ such that $6 \log_q{g} < n < (1 - \varepsilon) \left(\frac{2 g}{\ell - 1} + 2 \right)$, as $g \rightarrow \infty$ we have
    \begin{align*}
        \left\langle \Tr \Theta_C ^n \right\rangle_{\mathcal{H}_{g,\ell}}
        &=  \int_{\U(2g)} \Tr U^n \dd U + O\left( \frac{1}{g} \right).
    \end{align*}
    Let $f$ be an even test function in the Schwartz space $\mathcal{S}(\mathbb{R})$ with $ \operatorname{supp} \hat{f} \subset \left(-\frac{1}{\ell - 1}, \frac{1}{\ell - 1} \right)$, then
    \begin{eqnarray*}
        \left \langle W_f (\Theta_C) \right\rangle_{\mathcal{H}_{g,\ell}} &=&  \int_{\U (2g)} W_f(U) \dd U \\
        &&- \hat{f}(0) \frac{\ell-1}{g} \sum_{v} \frac{\deg v}{(1 + (\ell-1) q^{-\deg v})(q^{\ell\deg v/2} - 1)}
        + O\left( \frac{1}{g^{2 - \varepsilon}} \right),
    \end{eqnarray*}
    where the sum is over all places $v$ of $\FF_q(X)$.
\end{theorem}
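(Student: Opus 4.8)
The plan is to pass through the Weil explicit formula: write $q^{n/2}\Tr\Theta_C^{n}$ as a weighted character sum over prime polynomials, average it over $\mathcal H_{g,\ell}$ using the densities of primes with prescribed splitting, and, for the one-level density, feed the resulting formula for $\langle\Tr\Theta_C^{n}\rangle$ back into the explicit formula linking zeros and traces. Concretely, for $C\colon Y^{\ell}=F(X)$ in $\mathcal H_{g,\ell}$ one factors $P_C(u)=\prod_{j=1}^{\ell-1}L(u,\chi_F^{\,j})$ up to a bounded-degree archimedean factor, with $\chi_F$ the order-$\ell$ Kummer character attached to $F$; taking logarithmic derivatives (equivalently, comparing point counts of $C$ and $\mathbb P^{1}$) gives
\[
  q^{n/2}\Tr\Theta_C^{n}=-\sum_{d\divides n}d\sum_{\deg v=d}c_v(n/d)+O(1),\qquad c_v(m):=\sum_{j=1}^{\ell-1}\chi_F(v)^{jm},
\]
where $c_v(m)=\ell-1$ if $\chi_F(v)^{m}=1$, $c_v(m)=-1$ if $\chi_F(v)^{m}$ is a primitive $\ell$-th root of unity, and $c_v(m)=0$ if $v\divides F$. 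By Riemann--Hurwitz the conductor of the cover has degree $\tfrac{2g}{\ell-1}+2+O(1)$, so the hypothesis $n<(1-\varepsilon)\bigl(\tfrac{2g}{\ell-1}+2\bigr)$ keeps $n$ a definite proportion below it, which renders the archimedean and functional-equation contributions $O(q^{-\varepsilon'g})$ for some $\varepsilon'>0$.

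Averaging over $\mathcal H_{g,\ell}$, the densities of primes with prescribed ramification type in a cyclic $\ell$-cover from \cite{BDFKLOW} --- in the sharpened form, proved earlier in this paper, making the dependence on $\deg v$ explicit, and complemented by \cite{Zhao} --- give, uniformly for $\deg v$ below a fixed proportion of $g$,
\[
  \bigl\langle c_v(m)\bigr\rangle_{\mathcal H_{g,\ell}}=\begin{cases}\dfrac{\ell-1}{1+(\ell-1)q^{-\deg v}}+(\text{small}) & \ell\divides m,\\ (\text{small}) & \ell\ndivides m.\end{cases}
\]
The first line is $\ell-1$ times the probability that $v$ is unramified in the cover; the cancellation in the second comes from summing $\chi_F(v)^{jm}$ over the $\ell-1$ non-trivial characters. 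Thus the surviving contribution to $\langle q^{n/2}\Tr\Theta_C^{n}\rangle$ comes from $d$ with $\ell d\divides n$, dominated by $d=n/\ell$, of size $(\ell-1)q^{n/\ell}$; dividing by $q^{n/2}$ yields a quantity of size $(\ell-1)q^{-n(\ell-2)/(2\ell)}$. Since $\ell\ge 3$ gives $\tfrac{2\ell}{\ell-2}\le 6$, the hypothesis $6\log_q g<n$ forces this to be $o(1/g)$, and the remaining terms together with the errors marked ``small'' are $O(1/g)$ in this range. As $\int_{\U(2g)}\Tr U^{n}\dd U=0$ for all $n\ge 1$ (a standard random-matrix computation), this gives the first assertion.

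For the one-level density I use the function-field explicit formula
\[
  W_f(\Theta_C)=\hat f(0)+\frac1g\sum_{k\ge1}\hat f\!\Bigl(\tfrac{k}{2g}\Bigr)\Tr\Theta_C^{k},
\]
which, applied to a Haar-random $U\in\U(2g)$ (and using again that all $\int_{\U(2g)}\Tr U^{k}\dd U$ vanish), also gives $\int_{\U(2g)}W_f(U)\dd U=\hat f(0)$. Because $\operatorname{supp}\hat f$ is compact and contained in $\bigl(-\tfrac1{\ell-1},\tfrac1{\ell-1}\bigr)$, the $k$-sum runs over a range covered by the previous step; inserting the formula for $\langle\Tr\Theta_C^{k}\rangle$ and regrouping the sum by the place $v$ (so that only $k=\ell(\deg v)j$ contributes) yields
\[
  \bigl\langle W_f\bigr\rangle_{\mathcal H_{g,\ell}}-\int_{\U(2g)}W_f(U)\dd U=-\frac{\ell-1}{g}\sum_{v}\frac{\deg v}{1+(\ell-1)q^{-\deg v}}\sum_{j\ge1}\hat f\!\Bigl(\tfrac{\ell(\deg v)j}{2g}\Bigr)q^{-\ell(\deg v)j/2}+(\text{error}),
\]
and replacing $\hat f\bigl(\tfrac{\ell(\deg v)j}{2g}\bigr)$ by $\hat f(0)$ collapses the inner sum to the geometric series $\dfrac{\hat f(0)}{q^{\ell\deg v/2}-1}$, which is precisely the stated secondary term.

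It remains to control the error, and this is where I expect the real work to lie. There are two pieces: first, replacing $\hat f(\,\cdot\,)$ by $\hat f(0)$ --- since $f$ is even, $\hat f$ is even and smooth, so $\hat f(x)=\hat f(0)+O(x^{2})$, and the resulting tail is $O(1/g^{2})$ thanks to the factors $q^{-\ell(\deg v)j/2}$ (again using $\ell\ge 3$); second, the ``small'' terms in $\langle\Tr\Theta_C^{k}\rangle$, which when summed against $\tfrac1g\hat f(k/2g)$ over $1\le k\ll g$ contribute $O(1/g^{2-\varepsilon})$, the $g^{\varepsilon}$ absorbing the $\deg v$-dependence of the density error terms. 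The whole argument hinges on knowing the prime-splitting densities of \cite{BDFKLOW} with error terms that are explicit and summable in $\deg v$ --- exactly the extension advertised in the introduction --- and on controlling the near-conductor (functional-equation) contributions finely enough that, after the two cancellations above, what remains beats $1/g$; the hypotheses on $n$ and on $\operatorname{supp}\hat f$ are calibrated so that the dominant surviving error is of size $g^{-(2-\varepsilon)}$.
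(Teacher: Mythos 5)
Your proposal is correct and follows essentially the same route as the paper: the exact explicit formula from the factorization $P_C(u)=\prod_j L(u,\chi_F^j)$ (your character sums $c_v(m)$ are just a repackaging of the paper's split/inert decomposition), averaging against the prime-splitting densities with explicit $\deg v$-dependence so that the split and inert main terms cancel and only the $\ell\mid n$ contribution of size $q^{n/\ell-n/2}$ survives, and then the Fourier expansion of $W_f$ regrouped by places with $\hat f$ replaced by $\hat f(0)$. The only cosmetic differences are that you use the second-order Taylor expansion of the even function $\hat f$ where the paper splits the inner sum at $g^{\varepsilon}$ and uses a first-order expansion, and you defer (but correctly identify) the key input, namely the density error terms $O(q^{(\varepsilon-1/2)d+\varepsilon\deg v})$ and the polynomial correction $P_{v,\mathrm{inert}}(d)/P(d)=(\ell-1)+O(\deg v/d+\cdots)$, which is exactly what the paper's Theorems 3.3 and 5.1 supply.
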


\begin{theorem}
  \label{introthm:E3} Let $E_3(g)$ be the space of cubic non-Galois extensions of $\F_q(X)$ with discriminant of degree $2g+4$,  and let $\delta,B > 0$ be fixed constants as in Theorem~\ref{thm:zhao}.

 For $6 \log_q{g} < n < \frac{\delta g}{B+1/2}$, and as $g \rightarrow \infty$,
    \begin{equation*}
        \left\langle \Tr \Theta_C ^n \right\rangle_{E_3(g)}
        =  \int_{\USp(2g)} \Tr U^n \dd U + O\left( \frac{1}{g} \right).
    \end{equation*}
   Let $f$ be an even test function in the Schwartz space $\mathcal{S}(\mathbb{R})$ with $ \operatorname{supp} \hat{f} \subset (- \frac{\delta}{2B + 1}, \frac{\delta}{2B + 1} )$, then for any $\varepsilon > 0$,
   \begin{align*}
      \left \langle W_f (\Theta_C) \right\rangle_{E_3(g)} =  \int_{\USp (2g)} W_f(U) \dd U - \frac{\hat{f}(0)}{g}  \kappa  + O\left( \frac{1}{g^{2 - \varepsilon}} \right),
  \end{align*}
  where $\kappa$ is defined by \eqref{def-kappa}.
\end{theorem}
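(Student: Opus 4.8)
The plan is to carry over, with only the input changed, the method developed for hyperelliptic curves in Section~\ref{sec:hyperelliptic}: Weil's formula converts traces of Frobenius into point counts, these are decomposed as sums over places of $\F_q(X)$ indexed by the factorization type of the place, and the average over the family is then controlled by an equidistribution statement for those types --- here Zhao's theorem (Theorem~\ref{thm:zhao}) in place of the densities of~\cite{BDFKLOW}. Concretely, if $C$ is the smooth projective model of a cubic non-Galois extension $K/\F_q(X)$ in $E_3(g)$, then Riemann--Hurwitz gives $g(C)=g$ (this is what the normalization ``discriminant of degree $2g+4$'' encodes), and the introduction's formula reads
\[
  \Tr\Theta_C^{\,n}=q^{-n/2}\bigl(q^n+1-\#C(\F_{q^n})\bigr).
\]
One then writes $\#C(\F_{q^n})=\sum_v N_v(n,C)$, summed over the places $v$ of $\F_q(X)$, where $N_v(n,C)$ is an explicit function of $\deg v$, of $n$, and of the factorization type of $v$ in $K$ (split, partially split, inert, partially ramified, or totally ramified). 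The ramified $v$ are those dividing the discriminant, so their contribution --- together with that of the place at infinity --- is where the particular normalization of $E_3(g)$ intervenes; this is the analogue of the infinite-place discrepancy between $\mathcal F_{2g+1}$ and $\mathcal H_g$ mentioned above.

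The next step is to average over $E_3(g)$. For each place $v$, Theorem~\ref{thm:zhao} gives the proportion of $C\in E_3(g)$ with a prescribed factorization type at $v$ as the natural $S_3$-density ($1/6$, $1/2$, $1/3$ for the split, partially split and inert types) plus an explicit arithmetic correction of size $O(q^{-\deg v})$ reflecting the possibility of ramification, plus a genuine error controlled by $\delta$ and $B$. Feeding this into $\langle N_v(n,\,\cdot\,)\rangle_{E_3(g)}$, summing over $v$, and grouping by $d=\deg v$ (the relevant divisibility conditions being $d\mid n$, $d\mid n/2$, $d\mid n/3$ according to type), the identity $\sum_{e\mid d}e\,\pi_q(e)=q^d$ shows that the main terms collapse to $q^n+1+\eta_n q^{n/2}$ up to contributions of strictly smaller exponential size, and after dividing by $q^{n/2}$ one recovers $\int_{\USp(2g)}\Tr U^n\dd U=-\eta_n$. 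The hypothesis $n>6\log_q g$ is exactly what makes those smaller contributions (which include a term of size $q^{-n/6}$ coming from the places with $3\deg v\mid n$, the arithmetic corrections, the expected contribution of the ramified places, and the $q^{-g}$-size error from the completed $L$-function) all $\ll 1/g$, while $n<\delta g/(B+1/2)$ is what makes the accumulated error from Theorem~\ref{thm:zhao}, which grows with $n$, still dominated by the power saving in $g$. This proves the first displayed formula, and for the one-level density one keeps the explicit lower-order terms instead of absorbing them.

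For the second statement I would invoke the explicit formula. Poisson summation applied to the eigenangles of $\Theta_C\in\USp(2g)$ turns the one-level density~\eqref{def-one-level} into
\[
  W_f(\Theta_C)=\hat f(0)+\frac1g\sum_{n\ge 1}\hat f\!\Bigl(\tfrac{n}{2g}\Bigr)\Tr\Theta_C^{\,n},
\]
a finite sum since $\hat f$ is compactly supported; the condition $\operatorname{supp}\hat f\subset\bigl(-\tfrac{\delta}{2B+1},\tfrac{\delta}{2B+1}\bigr)$ says precisely that the summation ranges over $n<\delta g/(B+1/2)$, i.e. exactly the range in which the trace formula of the first part holds. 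Averaging over $E_3(g)$ and inserting that formula, the terms $\int_{\USp(2g)}\Tr U^n\dd U$ reassemble --- via the same explicit formula on the random-matrix side --- into $\int_{\USp(2g)}W_f(U)\dd U$, and the explicit lower-order terms, after being weighted by $\tfrac1g\hat f(n/2g)$, are reorganized by exchanging the order of summation: for each place $v$ the inner sum over $n$ becomes a geometric series in the exponent, producing $-\hat f(0)\,\kappa/g$ with $\kappa$ the convergent sum over places in~\eqref{def-kappa}. Replacing $\hat f(n/2g)$ by $\hat f(0)$ in this step costs $O(1/g^{2-\varepsilon})$ --- the $\varepsilon$ coming from the tail of the $n$-sum near the edge of $\operatorname{supp}\hat f$, where that approximation degrades --- and the genuinely small errors (those from Theorem~\ref{thm:zhao} and the $q^{-g}$-size terms) contribute $O(1/g^{2-\varepsilon})$ as well.

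The main obstacle is the uniform control, over all places $v$ of degree up to roughly $n$, of the error term in Zhao's equidistribution result: this is what forces the admissible range of $n$ to be only linear in $g$, with the explicit slope $\delta/(B+1/2)$, and it is the point at which the cubic non-Galois case is genuinely harder than the hyperelliptic one, where the densities of~\cite{BDFKLOW} come with far better error terms. A secondary but delicate point is the bookkeeping --- under the constraint that the discriminant has degree $2g+4$ --- of the ramified places and of the place at infinity, which is where the precise value of $\kappa$ in~\eqref{def-kappa} gets pinned down.
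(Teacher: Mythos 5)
Your proposal is correct and follows essentially the same route as the paper: the explicit formula decomposing $-q^{n/2}\Tr\Theta_C^n$ over places by splitting type, averaging termwise via Zhao's equidistribution theorem so that the main densities reproduce $-\eta_n$ with secondary terms of size $q^{-n/6}$ and an accumulated error $q^{-\delta g+(B+1/2)n}$ (whence the two constraints on $n$), and then for the one-level density the Fourier expansion, a swap of the $n$- and $v$-sums turning each place's contribution into a geometric series, and a Taylor expansion of $\hat f$ near $0$ yielding $-\hat f(0)\kappa/g + O(g^{-2+\varepsilon})$. This matches the paper's Theorems~\ref{thm:trace-cubicnongalois} and~\ref{thm-oneLD} and Corollary~\ref{cor:cubic-ngalois} step for step.
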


We remark that the one-level densities exhibit the predicted symmetries: unitary for cyclic covers of order $\ell$ (for $\ell$ an odd prime), and symplectic for cubic non-Galois extensions.

The main theorems of Sections~\ref{sec:hyperelliptic} and~\ref{sec:general-ell} rely on results concerning the densities of prime polynomials with prescribed ramification types at particular places from~\cite{BDFKLOW}, but also require understanding of dependence on those places.
In Section~\ref{sec:lindelof}, we show how to make explicit this dependence, which involves proving the explicit Lindel\"{o}f bound for $L(s, \chi)$.

\section*{Acknowledgements}
The proof of the Lindel\"{o}f Hypothesis presented in Section~\ref{sec:lindelof} was suggested to us by Soundararajan, following his work with Chandee for a similar bound for the Riemann zeta function~\cite{chandee-sound}, and we are very grateful for his suggestion and help. We would also like to thank Daniel Fiorilli and Lior Bary-Soroker for helpful discussions,
and Zeev Rudnick for helpful discussions related to this work, and comments on previous versions of this paper.
We would like to thank the Arizona Winter School for creating opportunities for research and providing an excellent platform for starting the collaboration that lead to this paper.
We also wish to thank ICERM (Providence, RI) for its hospitality during Fall 2015 when this paper was finalized.

\section{Hyperelliptic covers}
\label{sec:hyperelliptic}

In this section we present a weaker version of Theorem~\ref{thm:chinis}, using a different technique, namely using the
results of~\cite{BDFKLOW} to count the function field extensions corresponding to the hyperelliptic curves in $\mathcal{H}_g$ with prescribed ramification/splitting
conditions.
Let $\mathcal{H}_g$ be the moduli space of hyperelliptic curves of genus $g$. Every such curve has an affine model
\begin{equation*}
    C : Y^2 = Q(X),
\end{equation*}
with $Q(X)$ is a square-free polynomial of degree $2g+1$ or $2g+2$.

\begin{theorem}
    \label{thm:density-hyperelliptic}
    Let $E(\mathbb{Z}/2\mathbb{Z}, g)$ be the set of quadratic extensions of genus $g$ of $\mathbb{F}_q[X]$,
    let $v_0$ be a place, and
    let $E( \mathbb{Z}/2\mathbb{Z}, g, v_0, \omega)$ be the subset of $E(\mathbb{Z}/2\mathbb{Z}, g)$  with prescribed behavior $\omega
    \in \left\{ \text{ramified}, \text{split}, \text{inert} \right\}$
    at the place $v_0$.
    Then for any $\varepsilon > 0$,
    \begin{align*}
      \frac{\# E(\mathbb{Z}/2\mathbb{Z}, g, v_0,\text{ramified})}{\# E(\mathbb{Z}/2\mathbb{Z}, g)} &= \frac{ q^{- \deg v_0}}{1+q^{- \deg v_0}}
      +
      O \bigl( q^{-2g} \bigr)\\
      \frac{\# E(\mathbb{Z}/2\mathbb{Z}, g, v_0,\text{split})}{\# E(\mathbb{Z}/2\mathbb{Z}, g)}
      &=
      \frac{\# E(\mathbb{Z}/2\mathbb{Z}, g, v_0,\text{inert})}{\# E(\mathbb{Z}/2\mathbb{Z}, g)}\\ &= \frac{ 1}{2 \bigl( 1 + q^{- \deg v_0} \bigr)}
      +
      O \bigl( q^{(\varepsilon - 1) (g+1) + \varepsilon \deg v_0 } \bigr).
    \end{align*}
\end{theorem}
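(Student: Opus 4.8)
The plan is to count quadratic extensions of $\F_q(X)$ of genus $g$ with a prescribed local condition at $v_0$ by reducing the count to a count of square-free monic polynomials of controlled degree with a congruence/coprimality condition modulo the prime $P_0$ underlying $v_0$. A quadratic extension of genus $g$ corresponds to a curve $Y^2 = Q(X)$ with $Q$ square-free; the genus is determined by $\deg Q$ together with whether the point at infinity ramifies, so $E(\Z/2\Z,g)$ is naturally a disjoint union of the family $\mathcal{F}_{2g+1}$ (with $\deg Q = 2g+1$, $v_\infty$ ramified) and the even-degree family (with $\deg Q = 2g+2$), up to the usual identification $Q \sim c^2 Q$. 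First I would fix the bijection precisely and record $\#E(\Z/2\Z,g) = \tfrac{q-1}{\#\F_q^{\times 2}} \bigl( \#\{Q \text{ sq-free monic}, \deg Q = 2g+1\} + \#\{Q \text{ sq-free monic}, \deg Q = 2g+2\} \bigr)$; recall that the number of square-free monic polynomials of degree $d \ge 2$ is exactly $q^d - q^{d-1} = q^d(1 - 1/q)$, so the denominator is $\asymp q^{2g}$ up to an explicit constant.

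Next I would translate the local condition at a \emph{finite} place $v_0$, corresponding to a monic irreducible $P_0$ of degree $\deg v_0 = d_0$. The place $v_0$ ramifies in $\F_q(X)(\sqrt{Q})$ iff $P_0 \divides Q$ (and then exactly $P_0 \| Q$ since $Q$ is square-free); it splits iff $Q$ is a nonzero square mod $P_0$; and it is inert iff $Q$ is a non-square mod $P_0$. So the count splits according to the residue $Q \bmod P_0 \in \F_{q^{d_0}}$: ramified corresponds to $Q \equiv 0$, split to $Q$ landing in the $(q^{d_0}-1)/2$ nonzero squares, inert to the $(q^{d_0}-1)/2$ non-squares. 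The key analytic input is an equidistribution statement: among square-free monic polynomials of degree $d$, the proportion with $P_0 \divides Q$ is $\dfrac{q^{-d_0}}{1+q^{-d_0}} + (\text{small})$, and conditionally on $P_0 \nmid Q$ the residue $Q \bmod P_0$ is equidistributed among the $q^{d_0}-1$ nonzero classes with a power-saving error. I would obtain this either from a direct sieve — writing the indicator of square-freeness as $\sum_{A^2 \divides Q} \mu(A)$ and summing the resulting geometric series, being careful to separate the $A$ divisible by $P_0$ from those coprime to $P_0$ — or by invoking the density results of~\cite{BDFKLOW} directly, which is the intended route given the surrounding text. The ramified density $\frac{q^{-d_0}}{1+q^{-d_0}}$ comes out because among square-free $Q$ one is comparing $\#\{P_0\| Q\}$ against $\#\{P_0 \nmid Q\}$, giving the ratio $q^{-d_0} \cdot \frac{1}{1 - q^{-d_0}} \big/ \bigl(1 + q^{-d_0}\cdot\frac{1}{1-q^{-d_0}}\bigr)$ after accounting for the local square-free factor $(1 - q^{-2d_0})$ cancelling appropriately; the split and inert densities are then each half of the complementary mass $\frac{1}{1+q^{-d_0}}$, yielding $\frac{1}{2(1+q^{-d_0})}$.

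The two families of degrees $2g+1$ and $2g+2$ must be handled together, and one must check that the place at infinity does not interfere: $v_0$ is a finite place by hypothesis, so the infinite behavior only affects which degree parity contributes to genus $g$, not the local count at $v_0$; summing the two contributions and dividing by the total reproduces the claimed main terms with the error terms coming from the worse of the two. I expect the main obstacle to be bookkeeping the \emph{uniformity in $d_0 = \deg v_0$} of the error term — in particular getting the clean shape $O(q^{(\varepsilon-1)(g+1) + \varepsilon \deg v_0})$ for the split/inert case rather than a weaker bound. This requires tracking, in the sieve or in the cited density theorem, exactly how the secondary terms grow with $q^{d_0}$: the sieve has a range $\deg A \le (2g+2)/2 = g+1$, and the terms with $\deg A$ close to $g+1$ combined with the divisibility bookkeeping modulo $P_0$ are what force the $\varepsilon$-loss and the $q^{\varepsilon \deg v_0}$ factor. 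The ramified case is cleaner (error $O(q^{-2g})$) because the divisibility $P_0 \divides Q$ already cuts the count by $q^{-d_0}$, absorbing the dependence on $d_0$. I would conclude by assembling the three estimates and simplifying, noting that $\frac{q^{-d_0}}{1+q^{-d_0}} + 2 \cdot \frac{1}{2(1+q^{-d_0})} = 1$ as a sanity check that the three local behaviors exhaust all extensions.
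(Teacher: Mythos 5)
Your route --- parametrizing $E(\mathbb{Z}/2\mathbb{Z},g)$ by square-free monic $Q$ of degree $2g+1$ or $2g+2$ (times the quadratic twist class), translating the condition at $v_0$ into the value of the residue symbol $\chi_{v_0}(Q)$, and running a square-free sieve --- is genuinely different from the paper's, which simply quotes the counts of~\cite{BDFKLOW} (obtained there from generating Dirichlet series and a Tauberian theorem) and then appeals to Section~\ref{sec:lindelof} for the dependence on $v_0$. Your main terms and the sanity check $\frac{q^{-d_0}}{1+q^{-d_0}}+2\cdot\frac{1}{2(1+q^{-d_0})}=1$ are correct, although your parenthetical derivation of the ramified density is garbled as written (it evaluates to $q^{-d_0}$, not $\frac{q^{-d_0}}{1+q^{-d_0}}$); the clean computation compares the local factor $q^{-d_0}(1-q^{-d_0})$ for ``exactly divisible by $P_0$'' against $1-q^{-2d_0}$ for ``square-free at $v_0$''.

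The genuine gap sits exactly where you say you ``expect the main obstacle'': the factor $q^{\varepsilon\deg v_0}$ for \emph{every} $\varepsilon>0$. Carrying out your sieve, the split/inert count reduces to $\sum_{\deg Q=d}\mu^2(Q)\chi_{v_0}(Q)=\sum_{2\deg A\le d}\mu(A)\chi_{v_0}(A)^2\sum_{\deg B=d-2\deg A}\chi_{v_0}(B)$; the inner sum is the coefficient of $u^{d-2\deg A}$ in the polynomial $L(u,\chi_{v_0})$, which the Riemann Hypothesis bounds by $\binom{\deg v_0-1}{m}q^{m/2}$ for $m=d-2\deg A$. Each surviving term then contributes $q^{\deg A}\cdot q^{(d-2\deg A)/2}=q^{d/2}$ times a binomial coefficient, so the sieve delivers an error of size $q^{g+1}\,2^{\deg v_0}$ --- equivalently, the convexity bound $\sup_{|u|=q^{-1/2}}|L(u,\chi_{v_0})|\le 2^{\deg v_0-1}$. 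Since $2^{\deg v_0}=q^{\deg v_0\log_q 2}$, this proves the stated estimate only for $\varepsilon\ge\log_q 2$, not for arbitrary $\varepsilon>0$. Upgrading $2^{\deg v_0}$ to $q^{\varepsilon\deg v_0}$ is precisely the Lindel\"of bound $\log|L(1/2+it,\chi_{v_0})|\le\bigl(\frac{\log 2}{2}+o(1)\bigr)\frac{d}{\log_q d}$ of Theorem~\ref{thm:lh}, proved via the Carneiro--Vaaler inequality combined with the explicit formula; your proposal never identifies this as a needed input, and without it the error term as stated is out of reach. (Your fallback of ``invoking the density results of~\cite{BDFKLOW} directly'' does not close this either: their error terms have inexplicit dependence on $v_0$, and making that dependence explicit is the entire content of this theorem and of Section~\ref{sec:lindelof}.)
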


\begin{proof}
    It is shown in~\cite{BDFKLOW} that
    \begin{align*}
      \# E(\mathbb{Z}/2\mathbb{Z}, g) &= 2 q^{2g+2} \bigl( 1 - q^{-2} \bigr) \\
        \# E(\mathbb{Z}/2 \mathbb{Z}, g, v_0, \text{ramified})
        &=  \frac{ q^{- \deg v_0}}{1+q^{- \deg v_0}} 2 q^{2g+2} \bigl(1 - q^{-2}\bigr) + O(1) \\
      \# E(\mathbb{Z}/2\mathbb{Z}, g, v_0, \text{split}) &= \# E(\mathbb{Z}/2\mathbb{Z}, g, v_0, \text{inert}) \\
      &= \frac{ 1}{ 2 \bigl( 1 + q^{- \deg v_0} \bigr) }  2 q^{2g+2} \bigl( 1 - q^{2} \bigr)
      + O_{v_0} \bigl( q^{(g + 1)(1 + \varepsilon)}  \bigr),
    \end{align*}
    where $O_{v_o}$ indicates that the implicit constant may depend on $v_0$.
    We prove in Section~\ref{sec:lindelof} that keeping track of the dependence on $v_0$ gives
    \begin{equation*}
      \# E(\mathbb{Z}/2\mathbb{Z}, g, v_0, \omega) = \frac{ 1}{2(1+q^{- \deg v_0})}
      2 q^{2g+2} (1 - q^{2}) + O \bigl( q^{(g+1)(1+\varepsilon) + \varepsilon \deg v_0} \bigr),
    \end{equation*}
    for $\omega \in \left\{ \text{split}, \text{inert} \right\}$ which proves the theorem.
\end{proof}
\begin{lemma}
    \label{lemma:hyperelliptic}
    Let $C$ be a fixed $\F_q$-point in the moduli space $\mathcal{H}_{g}$, $\FF_q(C)$ its function field and $\Tr \Theta_C ^n$ be the $n$-th power of the trace of $C$.
    Then
    \begin{equation}
        \label{eqn:expansion-hyperelliptic}
        -q^{\frac{n}{2}} \Tr \Theta_C ^n
        =
        \sum_{{\substack{\deg v \divides n\\ v \text{ split in }\\ \FF_q(C)  }}} \deg v
        +
        \eta_n \sum_{{\substack{\deg v \divides  \frac{n}{2} \\ v \text{ inert in }\\ \FF_q(C) }}} 2 \deg v
        -
        \sum_{{\substack{\deg v \divides  n \\ v \text{ inert in }\\ \FF_q(C) }}} \deg v,
    \end{equation}
    where the sums are over all places $v$ of $\FF_q(X)$ (including the place at infinity) with the prescribed behavior.
\end{lemma}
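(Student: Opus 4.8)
The plan is to compute $\#C(\mathbb{F}_{q^n})$ in two different ways and compare. On one hand, expanding the logarithm in the defining series of $Z_C(u)=P_C(u)/((1-u)(1-qu))$ and using that $P_C(u)=\prod_{j=1}^{2g}\bigl(1-q^{1/2}e^{i\theta_j}u\bigr)$, a comparison of the coefficients of $u^n/n$ gives the classical identity
\[
  \#C(\FF_{q^n}) = q^n + 1 - q^{\frac n2}\Tr\Theta_C^n,
\]
so that $-q^{\frac n2}\Tr\Theta_C^n = \#C(\FF_{q^n}) - \#\mathbb{P}^1(\FF_{q^n})$, where we use $\#\mathbb{P}^1(\FF_{q^n}) = q^n+1$. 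Thus it suffices to identify this difference with the right-hand side of~\eqref{eqn:expansion-hyperelliptic}.

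On the other hand, I would count points on the smooth projective models using the Euler product for the zeta function: $\#C(\FF_{q^n}) = \sum_{\deg P \divides n}\deg P$ with $P$ ranging over the closed points of $C$, and likewise $q^n+1 = \sum_{\deg v \divides n}\deg v$ with $v$ ranging over the places of $\FF_q(X)$. Since $\FF_q(C)/\FF_q(X)$ is a quadratic extension, each place $v$ — including the place at infinity — is split (two places above, each of degree $\deg v$), inert (one place of degree $2\deg v$), or ramified (one place of degree $\deg v$). Grouping the places $P$ of $C$ according to the place $v$ lying below them and substituting these degree data, I would rewrite $\#C(\FF_{q^n}) - (q^n+1)$ as a single sum over $v$: the ramified places cancel entirely, and each split $v$ with $\deg v \divides n$ contributes $2\deg v - \deg v = \deg v$, accounting for the first sum in~\eqref{eqn:expansion-hyperelliptic}.

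The only point requiring a small case split is the inert places. For inert $v$, the unique place $P$ above it has degree $2\deg v$, so it contributes $2\deg v$ to $\#C(\FF_{q^n})$ exactly when $2\deg v \divides n$, that is, when $n$ is even and $\deg v \divides \tfrac n2$; in indicator notation this is $\eta_n$ times the indicator of $\deg v \divides \tfrac n2$. Meanwhile $v$ itself contributes $\deg v$ to $q^n+1$ whenever $\deg v \divides n$. Hence each inert $v$ contributes $2\eta_n\deg v\cdot[\deg v \divides \tfrac n2] - \deg v\cdot[\deg v \divides n]$ to $\#C(\FF_{q^n}) - (q^n+1)$, and summing over all inert $v$ produces the remaining two sums in~\eqref{eqn:expansion-hyperelliptic}, with the correct signs.

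I do not anticipate a genuine obstacle: the argument is entirely formal. The two points demanding care are (i) that $\#C(\FF_{q^n})$ must refer to the \emph{smooth projective} model of $C$, so that the Euler product and the decomposition of places are valid and one need not separately handle the affine models of degree $2g+1$ and $2g+2$ — the difference between these cases is precisely the behavior of the place at infinity, which is subsumed in the uniform statement; and (ii) the bookkeeping identity $[\,2\deg v \divides n\,] = \eta_n\,[\,\deg v \divides \tfrac n2\,]$, which is what converts the inert contribution into a sum over $\deg v \divides \tfrac n2$ weighted by $\eta_n$.
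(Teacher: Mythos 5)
Your proof is correct and is essentially the paper's argument in disguise: the paper takes the logarithmic derivative of $P_C(q^{-s}) = \zeta_{\FF_q(C)}(s)/\zeta_{\FF_q(X)}(s)$ expressed as an Euler product, which is exactly your comparison of $\#C(\FF_{q^n}) = q^n + 1 - q^{n/2}\Tr\Theta_C^n$ with the count of closed points grouped by splitting type over each place $v$. The bookkeeping, including the identity $[\,2\deg v \divides n\,] = \eta_n[\,\deg v \divides \tfrac n2\,]$ and the cancellation of the ramified places, is all correct.
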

\begin{proof}
    For any function field $K$, over $\FF_q(X)$,  we  denote its zeta function by $\zeta_K(s)$.
    The lemma follows by taking the logarithmic derivative on both sides of
    \begin{equation*}
        \prod_{j=1} ^{2g} \bigl( 1 - q^{1/2} q^{-s} e^{i \theta_j} \bigr) = P_C\bigl( q^{-s} \bigr) = \frac{\zeta_{\FF_q(C)} (s)}{\zeta_{\FF_q(X)} (s)}
    \end{equation*}
    with respect to $q^{-s}$ after expressing $\zeta_{\FF_q(C)}(s)/\zeta_{\FF_q(X)}(s)$ as an Euler product.
\end{proof}

\begin{theorem}
    The average $n$-th moment of the trace over hyperelliptic curves of genus $g$ is given by
    \label{thm:moment-hyperelliptic}
    \begin{align*}
        \left\langle -q^{\frac{n}{2} } \Tr \Theta_C ^n \right\rangle_{\mathcal{H}_{g}}
        & =
        \eta_n q^{\frac{n}{2}} -  \eta_n \sum_{\substack{\deg v \divides  \frac{n}{2} \\\deg v \neq 1} } \frac{\deg v}{1+q^{\deg v}} +  O\bigl( q^{(\varepsilon-1)(g+1) + n(1 + \varepsilon)} \bigr)
    \end{align*}
    for all $\varepsilon > 0$, and where the sum is over all finite places $v$ of $\FF_q(X)$.
\end{theorem}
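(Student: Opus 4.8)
The plan is to average the exact identity~\eqref{eqn:expansion-hyperelliptic} of Lemma~\ref{lemma:hyperelliptic} over $C \in \mathcal{H}_g$ and insert the densities furnished by Theorem~\ref{thm:density-hyperelliptic}. Identifying $\mathcal{H}_g$ with $E(\mathbb{Z}/2\mathbb{Z},g)$ and noting that the sums over places $v$ with $\deg v \divides n$ are finite, we may interchange averaging and summation to obtain
\[
    \left\langle -q^{n/2}\Tr\Theta_C^n\right\rangle_{\mathcal{H}_g}
    = \sum_{\deg v \divides n} \deg v \cdot \rho_{\mathrm s}(v)
    + \eta_n \sum_{\deg v \divides n/2} 2\deg v \cdot \rho_{\mathrm i}(v)
    - \sum_{\deg v \divides n} \deg v \cdot \rho_{\mathrm i}(v),
\]
where $\rho_{\mathrm s}(v)$ and $\rho_{\mathrm i}(v)$ denote the proportion of curves in $\mathcal{H}_g$ in which the place $v$ of $\FF_q(X)$ is split, respectively inert, and the place at infinity (of degree $1$) is included in each sum. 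By Theorem~\ref{thm:density-hyperelliptic}, applied at each $v_0 = v$, we have $\rho_{\mathrm s}(v) = \rho_{\mathrm i}(v) = \tfrac12(1+q^{-\deg v})^{-1} + O(q^{(\varepsilon-1)(g+1)+\varepsilon\deg v})$, so the first and third sums have the same main term and cancel up to their error contributions, while the middle sum contributes $\eta_n \sum_{\deg v \divides n/2} \dfrac{\deg v}{1+q^{-\deg v}}$ plus error.

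Next I would evaluate this main term by writing $\dfrac{\deg v}{1+q^{-\deg v}} = \deg v - \dfrac{\deg v}{q^{\deg v}+1}$ and using the elementary identity $\sum_{\deg v \divides m}\deg v = q^m + 1$, valid for the sum over \emph{all} places of $\FF_q(X)$ including $\infty$; this is just the statement, read off coefficient by coefficient, that the logarithmic derivative of $\zeta_{\FF_q(X)}(s)=(1-q^{1-s})^{-1}(1-q^{-s})^{-1}$ has the expected Euler-product expansion. With $m=n/2$ the first piece contributes $\eta_n(q^{n/2}+1)$. On the other hand the $q+1$ places of degree $1$ — the $q$ finite places $X-a$ together with $\infty$ — each contribute $\tfrac{1}{q+1}$ to $\sum_{\deg v \divides n/2}\tfrac{\deg v}{q^{\deg v}+1}$, for a total of $1$, which cancels the $+1$ above and leaves exactly
\[
    \eta_n q^{n/2} - \eta_n \sum_{\substack{\deg v \divides n/2 \\ \deg v \neq 1}} \frac{\deg v}{1+q^{\deg v}},
\]
now a sum over finite places only, which is the asserted main term.

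It remains to control the error. Using $\sum_{\deg v = d}\deg v \le q^{d}+1$, the error from the near-cancellation of the split and inert sums is
\[
    \ll q^{(\varepsilon-1)(g+1)}\sum_{d\divides n} q^{(1+\varepsilon)d}
    \ll n\, q^{(\varepsilon-1)(g+1)+(1+\varepsilon)n},
\]
and the error from the middle sum is smaller, of size $\ll n\, q^{(\varepsilon-1)(g+1)+(1+\varepsilon)n/2}$; absorbing the harmless factor $n$ into the exponent (since $n \ll_\varepsilon q^{\varepsilon n}$) and relabelling $\varepsilon$ yields $O(q^{(\varepsilon-1)(g+1)+n(1+\varepsilon)})$. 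The one genuinely delicate point — and the reason the refinement of~\cite{BDFKLOW} carried out in Section~\ref{sec:lindelof} is needed here — is precisely this last estimate: there are $\asymp q^n$ places with $\deg v \divides n$, so the bound $O_{v_0}(\cdot)$ as originally stated in~\cite{BDFKLOW} would be useless once summed over all of them, whereas the explicit $q^{\varepsilon\deg v_0}$ dependence in Theorem~\ref{thm:density-hyperelliptic} keeps the geometric sum over places under control. Everything else is routine bookkeeping.
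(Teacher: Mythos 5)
Your proposal is correct and follows essentially the same route as the paper: average the explicit formula of Lemma~\ref{lemma:hyperelliptic}, swap sums, insert the densities of Theorem~\ref{thm:density-hyperelliptic} so the split and inert main terms cancel, and evaluate the remaining sum via $\sum_{\deg v \divides m}\deg v = q^m+1$ with the degree-one places (including $\infty$) absorbing the $+1$. The error bookkeeping, including absorbing the factor $n$ by relabelling $\varepsilon$ and the remark on why the explicit $q^{\varepsilon \deg v_0}$ dependence is indispensable, matches the paper's argument.
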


\begin{proof}
	We start out by averaging equation~\eqref{eqn:expansion-hyperelliptic} over hyperelliptic curves of genus $g$, hence $\left\langle -q^{\frac{n}{2} } \Tr \Theta_C ^n \right\rangle_{\mathcal{H}_{g}}$ equals
    \begin{equation*}
        \frac{1}{\# E(k, \mathbb{Z}/2\mathbb{Z}, g)}
        \sum_{C \in \mathcal{H}_g}
         \left(
         \sum_{\substack{\deg v \divides n\\ v \text{ split in }\\ \FF_q(C)  }} \deg v
        +
         \eta_n \sum_{\substack{\deg v \divides  \frac{n}{2} \\ v \text{ inert in }\\ \FF_q(C) }} 2 \deg v
         - \sum_{\substack{\deg v \divides  n \\ v \text{ inert in }\\ \FF_q(C) }} \deg v \right).
    \end{equation*}
    Swapping the order of summation gives us
    \begin{align*}
        \left\langle -q^{\frac{n}{2} } \Tr \Theta_C ^n \right\rangle_{\mathcal{H}_{g}}
        &
        = \sum_{\deg v \divides n} \deg v \frac{\# E(\mathbb{Z}/2\mathbb{Z}, g, v, \text{split})}{\# E(k, \mathbb{Z}/2\mathbb{Z}, g)}\\
        &
        \quad +  \eta_n \sum_{\deg v \divides \frac{n}{2}} 2\deg v \frac{\# E(\mathbb{Z}/2\mathbb{Z}, g, v, \text{inert})}{\# E(\mathbb{Z}/2\mathbb{Z}, g)}\\
        &
        \quad - \sum_{\deg v \divides n} \deg v \frac{\# E(\mathbb{Z}/2\mathbb{Z}, g, v, \text{inert})}{\# E(\mathbb{Z}/2\mathbb{Z}, g)}.
    \end{align*}
    Applying Theorem~\ref{thm:density-hyperelliptic} we get
    \begin{align*}
        \left\langle
            -q^{\frac{n}{2} } \Tr \Theta_C ^n
        \right\rangle_{\mathcal{H}_{g}}
        &=
        \sum_{\deg v \divides n} \deg v \left( \frac{1}{2(1+q^{-\deg v})} +  O\bigl( q^{(\varepsilon-1)(g+1) + \varepsilon \deg v}\bigr) \right)
        \\
        & \quad +
        \eta_n \sum_{\deg v \divides \frac{n}{2}} \deg v \left( \frac{1}{1+q^{-\deg v}}  +  O\bigl( q^{(\varepsilon-1)(g+1)  + \varepsilon \deg v }\bigr) \right)\\
        & \quad -
        \sum_{\deg v \divides n} \deg v \left( \frac{1}{2(1+q^{-\deg v})} +  O\bigl( q^{(\varepsilon-1)(g+1)  + \varepsilon \deg v}\bigr) \right).
    \end{align*}
    The main terms of the first and the third sums cancel, but their error terms do not.
    Therefore
    \begin{align*}
        \left\langle
            -q^{\frac{n}{2} } \Tr \Theta_C ^n
        \right\rangle_{\mathcal{H}_{g}}
        &=
        \eta_n \sum_{\deg v \divides \frac{n}{2}}  \frac{\deg v}{1+q^{-\deg v}}
         +
         O\left( q^{(\varepsilon-1)(g+1)} \sum_{\deg v \divides n} \deg v \: q^{\varepsilon \deg v}\right)\\
        & =
        \eta_n \sum_{\deg v \divides  \frac{n}{2}} \frac{\deg v}{1+q^{-\deg v}} +  O\bigl( q^{(\varepsilon-1)(g+1) + n( 1 + \varepsilon)} \bigr),
    \end{align*}
    where the last equality follows from the prime number theorem for $\mathbb{F}_q[X]$  (as proved in~\cite{rosen2002number} for instance).
    Using the following identity
    \begin{equation}
        \label{eqn:pnt}
        q^n = \sum_{d \divides  n} d \:  \pi(d),
    \end{equation}
    where $\pi(d)$ is the number of irreducible polynomials of degree $d$ defined over $\F_q$, we have for $n$ even that
    \begin{align*}
        \sum_{\deg v \divides  \frac{n}{2}} \frac{\deg v}{1+q^{-\deg v}}
        & = \sum_{\deg v \divides  \frac{n}{2}} \deg v - \sum_{\deg v \divides  \frac{n}{2}} \frac{\deg v }{1+q^{\deg v}}\\
        & =
        \sum_{d \divides  \frac{n}{2} } d \: \pi(d) + 1 - \sum_{\deg v \divides  \frac{n}{2}} \frac{\deg v }{1+q^{\deg v}}\\
        & =
        q^{\frac{n}{2}} -  \sum_{\substack{\deg v \divides  \frac{n}{2} \\\deg v \neq 1} } \frac{\deg v}{1+q^{\deg v}}.
    \end{align*}
 We remark that in the second equality above, the extra $1$ arises from the place at infinity.
\end{proof}

As expected from~\cite{rudnick} and~\cite{chinis}, the previous theorem agrees with corresponding statistics over $\USp(2g)$.
Recall from~\cite{eigenvalues} that
\begin{equation} \label{U-moments}
\int_{\USp(2g)} \Tr U^n \dd U = \begin{cases}
    2g & n = 0\\
    - \eta_n & 1 < n < 2g\\
    0 & n> 2g.
\end{cases}
\end{equation}

\begin{corollary}
    \label{cor:moment-hyperelliptic}
For any $\varepsilon > 0$, and as $g \rightarrow \infty$,
\begin{align*}
  \left\langle \Tr \Theta_C ^n \right\rangle_{\mathcal{H}_{g}}
  &=
  - \eta_n  \left( 1 - \frac{1}{1+q^{\frac{n}{2}}}\right) + O \left(q^{-\frac{n}{4}} + q^{(\varepsilon - 1) g + n (\varepsilon + \frac{1}{2})} \right).
  \end{align*}
  Moreover, for any $\varepsilon' >0$ and $n$ such that $4 \log_q{g} < n < 2g \bigl(1 - \varepsilon' \bigr)$, we have as $g \rightarrow \infty$
  \begin{align*}
  \left\langle \Tr \Theta_C^n \right\rangle_{\mathcal{H}_{g}}
    &=  \int_{\USp(2g)} \Tr U^n \dd U + O\left( \frac{1}{g} \right).
  \end{align*}
\end{corollary}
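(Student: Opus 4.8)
The plan is to derive both statements of the Corollary from Theorem~\ref{thm:moment-hyperelliptic} by dividing through by $-q^{n/2}$ and then estimating the resulting sum over places. Dividing the identity of Theorem~\ref{thm:moment-hyperelliptic} by $-q^{n/2}$ yields
\begin{equation*}
    \left\langle \Tr \Theta_C^n \right\rangle_{\mathcal{H}_g}
    = -\eta_n + \eta_n q^{-n/2} \sum_{\substack{\deg v \divides n/2 \\ \deg v \neq 1}} \frac{\deg v}{1 + q^{\deg v}}
    + O\bigl( q^{(\varepsilon - 1)(g+1) + n(\varepsilon + 1/2)} \bigr) ,
\end{equation*}
and since $(\varepsilon-1)(g+1) \le (\varepsilon-1)g$ for $0 < \varepsilon < 1$, this error is already of the shape appearing in the first assertion. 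For $n$ odd we have $\eta_n = 0$ and the first assertion is immediate, so we may assume $n$ even, and it remains to show that
\begin{equation*}
    q^{-n/2} \sum_{\substack{\deg v \divides n/2 \\ \deg v \neq 1}} \frac{\deg v}{1 + q^{\deg v}}
    = \frac{1}{1 + q^{n/2}} + O\bigl( q^{-n/4} \bigr) .
\end{equation*}

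To estimate the last sum I would write it as $\sum_{d \divides n/2,\, d \neq 1} \frac{d\,\pi(d)}{1 + q^{d}}$, where $\pi(d)$ is the number of monic irreducibles of degree $d$ over $\F_q$, and isolate the top divisor $d = n/2$. From~\eqref{eqn:pnt} each $d\,\pi(d)$ is nonnegative and bounded by $q^{d}$, so $\sum_{d \divides n/2,\, d < n/2} d\,\pi(d) \le \sum_{d \le n/4} q^{d} = O(q^{n/4})$; combining this with the identity $\sum_{d \divides n/2,\, d < n/2} d\,\pi(d) = q^{n/2} - (n/2)\pi(n/2)$ gives $(n/2)\pi(n/2) = q^{n/2} + O(q^{n/4})$. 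Hence the $d = n/2$ term equals $\tfrac{q^{n/2}}{1+q^{n/2}} + O(q^{-n/4}) = 1 - \tfrac{1}{1+q^{n/2}} + O(q^{-n/4})$, while the terms with $1 < d < n/2$ contribute at most $q^{-1}\sum_{d \divides n/2,\, d < n/2} d\,\pi(d) = O(q^{n/4})$. Adding these and multiplying by $q^{-n/2}$ gives the displayed estimate, the discrepancy $\tfrac{1}{1+q^{n/2}} - q^{-n/2} = O(q^{-n})$ being absorbed into the error; this establishes the first assertion.

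For the second assertion I would specialize to the window $4 \log_q g < n < 2g(1-\varepsilon')$, in which $1 < n < 2g$, so that~\eqref{U-moments} gives $\int_{\USp(2g)} \Tr U^n \dd U = -\eta_n$ and, by the first assertion,
\begin{equation*}
    \left\langle \Tr \Theta_C^n \right\rangle_{\mathcal{H}_g} - \int_{\USp(2g)} \Tr U^n \dd U
    = \frac{\eta_n}{1 + q^{n/2}} + O\bigl( q^{-n/4} + q^{(\varepsilon-1)g + n(\varepsilon + 1/2)} \bigr) .
\end{equation*}
The first two contributions are $O(1/g)$ since $n > 4 \log_q g$ forces $q^{-n/2} < g^{-2}$ and $q^{-n/4} < g^{-1}$. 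For the last term, inserting $n < 2g(1-\varepsilon')$ bounds its exponent by $g\bigl(\varepsilon(3 - 2\varepsilon') - \varepsilon'\bigr)$, which is a fixed negative multiple of $g$ as soon as $\varepsilon < \varepsilon'/(3 - 2\varepsilon')$; fixing any such $\varepsilon$ makes this term $O(q^{-cg})$ for some $c = c(\varepsilon') > 0$, hence $o(1/g)$. The only step requiring genuine care is this final balancing of $\varepsilon$ against $\varepsilon'$ together with the choice of threshold $4 \log_q g$; everything else is routine manipulation of the prime number theorem for $\F_q[X]$ and of the error terms already recorded in Theorem~\ref{thm:moment-hyperelliptic}.
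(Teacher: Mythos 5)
Your proposal is correct and follows essentially the same route as the paper: dividing the identity of Theorem~\ref{thm:moment-hyperelliptic} by $-q^{n/2}$, using the prime number theorem identity~\eqref{eqn:pnt} to show the divisor sum is dominated by its $d=n/2$ term up to $O(q^{n/4})$, and then choosing $\varepsilon$ so that $(\varepsilon-1)+2(1-\varepsilon')(\varepsilon+\tfrac12)<0$, which is exactly the condition $\varepsilon<\varepsilon'/(3-2\varepsilon')$ you arrive at. You have merely written out the details that the paper's two-sentence proof leaves implicit.
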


\begin{proof}
    Applying the prime number theorem to Theorem~\ref{thm:density-hyperelliptic}, we have
    \begin{equation*}
        \eta_n \sum_{\substack{\deg v \divides  \frac{n}{2} \\\deg v \neq 1} } \frac{\deg v}{1+q^{\deg v}} = \eta_n \frac{ q^{\frac{n}{2}} }{1+q^{\frac{n}{2}}} + \eta_n O\left(q^{\frac{n}{4}}\right).
    \end{equation*}
    To prove the second statement, we apply the first statement choosing $\varepsilon$ small enough such that
        $(\varepsilon - 1) + 2\bigl(1 - \varepsilon'\bigr) \left(\varepsilon + \frac{1}{2}\right) < 0$.
\end{proof}

We can apply the last result to determine the one-level density of hyperelliptic curves, as done in \cite{rudnick}, and we recall the definition of the one-level density in the function field setting with the relevant properties below for completeness. We will also apply this to other families of curves in the following sections.

Let $f$ be an even test function in the Schwartz space $\mathcal{S}(\mathbb{R})$, and for any integer $N \geq 1$, we define
\begin{equation*} \label{def-1}
    F(\theta) := \sum_{k \in {\mathbb Z}} f\left( N \left( \frac{\theta}{2 \pi} - k \right) \right),
\end{equation*}
which has period $2 \pi$ and is localized in an interval of size approximatively $1/N$ in
${\mathbb R}/2 \pi {\mathbb Z}$. Then, for a unitary matrix $N \times N$ matrix $U$ with eigenvalues $e^{i \theta_j}, j=1, \dots, N$, we define the one-level density
\begin{equation}
    \label{def-one-level}
    W_f(U) := \sum_{j=1}^N F(\theta_j),
\end{equation}
counting the number of angles $\theta_j$ in an interval of length  approximatively $1/N$ around 0 (weighted with the function $f$).
Using the Fourier expansion, we have that
\begin{equation*}
    W_f(U) = \int_{-\infty}^\infty f(x) dx + \frac1N \sum_{n \neq 0} \hat{f} \left( \frac{n}{N} \right) \Tr U^n .
\end{equation*}
Katz and Sarnak conjectured that for any fixed $q$, the expected value of $W_f(\Theta_c)$ over ${\mathcal{H}_{g}}$ will converge to $\int_{\USp(2g)} W_f(U) \dd U$ as $g \rightarrow \infty$ for any test function, and we show in the next theorem that this holds for test functions on a limited support (which is more restrictive than the support obtained in \cite[Corollary 3]{rudnick}).

\begin{theorem}
    \label{thm:1level-density-hyperelliptic}
    Let $f$ be an even test function in the Schwartz space $\mathcal{S}(\mathbb{R})$ with $ \operatorname{supp} \hat{f} \subset (- 1, 1)$. Then for any $\varepsilon > 0$,
    \begin{align*}
        \left \langle W_f (\Theta_C) \right\rangle_{\mathcal{H}_{g}} =  \int_{\USp (2g)} W_f(U) \dd U + \frac{\hat{f}(0)}{g}  \sum_{\deg{v} \neq 1}  \frac{\deg{v}}{q^{ 2 \deg v} -1}   +
        O\left( \frac{1}{g^{2-\varepsilon}} \right),
    \end{align*}
    where the sum is over all finite places $v$ of $\FF_q(X)$.
    Moreover,
    \begin{align*}
        \lim_{g \rightarrow \infty} \left \langle W_f (\Theta_C) \right\rangle_{\mathcal{H}_{g}}
        & = \lim_{g \rightarrow \infty} \int_{\USp(2g)} W_f(U) \dd U\\
        & = \int_{\mathbb{R}} f(x) \left( 1 - \frac{\sin (2 \pi x)}{2 \pi x} \right) \dd x.
    \end{align*}
\end{theorem}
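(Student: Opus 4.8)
The plan is to compute the expected value of $W_f(\Theta_C)$ by inserting the Fourier expansion of $W_f$ and then averaging term by term, using Corollary~\ref{cor:moment-hyperelliptic} for the trace moments. Starting from
\[
  W_f(U) = \int_{\mathbb R} f(x)\dd x + \frac{1}{2g}\sum_{n\neq 0}\hat f\!\left(\frac{n}{2g}\right)\Tr U^n,
\]
with $N = 2g$ (the eigenvalues of $\Theta_C$ live on a $2g\times 2g$ symplectic matrix), we take the average over $\mathcal H_g$. Since $f$ is even, $\hat f$ is even and $\Tr\Theta_C^{-n} = \overline{\Tr\Theta_C^n} = \Tr\Theta_C^n$ (the eigenangles come in conjugate pairs), so the sum over $n\neq 0$ is twice the sum over $n>0$. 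Because $\operatorname{supp}\hat f\subset(-1,1)$, only terms with $|n| < 2g$ contribute, so we never reach the range $n\geq 2g$ where the error term in Corollary~\ref{cor:moment-hyperelliptic} degrades; this is exactly why the support restriction appears.

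The second step is to split the sum over $0<n<2g$ into small $n$ (say $n\leq 4\log_q g$) and large $n$ ($4\log_q g < n < 2g$). For the small-$n$ range, the total contribution is $O(g^{-1}\cdot\log_q g\cdot\|\hat f\|_\infty) = O(g^{-1+\varepsilon})$ after noting each $\Tr\Theta_C^n$ is bounded by $2g q^{n/2}$ trivially — wait, that is too crude; instead I would note that for these small $n$ the first formula in Corollary~\ref{cor:moment-hyperelliptic} still applies and the main term $-\eta_n(1 - (1+q^{n/2})^{-1})$ contributes, after multiplication by $\frac{1}{2g}\hat f(n/2g)\cdot 2$ and summation, a piece that combines with the large-$n$ range. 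For the large-$n$ range, $4\log_q g < n < 2g(1-\varepsilon')$, the second formula in Corollary~\ref{cor:moment-hyperelliptic} gives $\langle\Tr\Theta_C^n\rangle = \int_{\USp(2g)}\Tr U^n\dd U + O(1/g)$, and the sum of the $O(1/g)$ errors over $\lesssim 2g$ values of $n$, weighted by $\frac{1}{2g}$, is $O(1/g)$ — this needs a slightly more careful treatment to reach $O(g^{-2+\varepsilon})$, presumably by using the sharper first estimate in the corollary uniformly and summing the geometric-type series $\sum_n \hat f(n/2g)(1+q^{n/2})^{-1}$ and $\sum_n \hat f(n/2g) q^{(\varepsilon-1)g + n(\varepsilon+1/2)}$, both of which are $O(1/g)$ after accounting for the Taylor expansion $\hat f(n/2g) = \hat f(0) + O(n/g)$.

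The third step is the identification of the secondary term. Summing the main terms $-\eta_n\bigl(1 - (1+q^{n/2})^{-1}\bigr)$ against $\frac{1}{g}\hat f(n/2g)$ over $n>0$ produces, on one hand, $\frac{1}{2g}\sum_{n\neq 0}\hat f(n/2g)\int_{\USp(2g)}\Tr U^n\dd U = \int_{\USp(2g)}W_f(U)\dd U - \int_{\mathbb R}f$, and on the other hand a correction $\frac{\hat f(0)}{g}\sum_{n\text{ even}, n>0}(1+q^{n/2})^{-1} + (\text{lower order})$, where after Taylor-expanding $\hat f$ around $0$ the error is absorbed into $O(g^{-2+\varepsilon})$. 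Writing $n = 2m$ and using $\sum_{m\geq 1}(1+q^m)^{-1} = \sum_{\deg v\neq 1}\frac{\deg v}{q^{2\deg v}-1}$ — this last identity I would derive from $\frac{1}{1+q^m} = \sum_{k\geq 1}(-1)^{k-1}q^{-km}$ combined with $q^m = \sum_{d\mid m}d\,\pi(d)$, mirroring the manipulation already used in the proof of Theorem~\ref{thm:moment-hyperelliptic}, and keeping in mind the bookkeeping with the place at infinity — gives the stated $\operatorname{dev}$-type term $\frac{\hat f(0)}{g}\sum_{\deg v\neq 1}\frac{\deg v}{q^{2\deg v}-1}$.

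For the final limit statement, I would invoke the classical Katz–Sarnak computation that $\lim_{g\to\infty}\int_{\USp(2g)}W_f(U)\dd U = \int_{\mathbb R}f(x)\bigl(1 - \frac{\sin 2\pi x}{2\pi x}\bigr)\dd x$ (this is the one-level density of the symplectic ensemble; it follows from \eqref{U-moments} and the Fourier expansion of $W_f$, since $\frac{1}{2g}\sum_{0<|n|<2g}\hat f(n/2g)(-\eta_n) \to -\int_0^1\hat f(x)\dd x = -\frac12\int_{-1}^1\hat f(x)\dd x$, which is $-\int_{\mathbb R}f(x)\frac{\sin 2\pi x}{2\pi x}\dd x$ by Plancherel since $\operatorname{supp}\hat f\subset(-1,1)$), and then observe that the secondary term $\frac{\hat f(0)}{g}\sum_{\deg v\neq 1}\frac{\deg v}{q^{2\deg v}-1}$ tends to $0$ because the sum over places converges (it is dominated by $\sum_d d\cdot\pi(d)\,q^{-2d} \ll \sum_d q^{-d} < \infty$). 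The main obstacle I anticipate is the bookkeeping in step two: getting the error down to the claimed $O(g^{-2+\varepsilon})$ rather than the naive $O(g^{-1})$ requires exploiting the smoothness of $\hat f$ near $0$ (to replace $\hat f(n/2g)$ by $\hat f(0)$ with controlled error) simultaneously with the decay in $n$ of the main term $(1+q^{n/2})^{-1}$, so that the cross terms $\frac{n}{g}\cdot\frac{1}{g}\cdot q^{-n/2}$ sum to something of size $g^{-2}$; and one must verify that the $n$ near $2g$ boundary of the sum, where Corollary~\ref{cor:moment-hyperelliptic} is weakest, still only contributes $O(g^{-2+\varepsilon})$ after the $\frac{1}{2g}$ weighting and the rapid decay of $\hat f$ toward the edge of its support.
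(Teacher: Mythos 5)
Your overall skeleton (the Fourier expansion with $N=2g$, the restriction to $0<n<2\alpha g$ coming from $\operatorname{supp}\hat f\subset(-1,1)$, the identification of $\int_{\USp(2g)}W_f(U)\dd U$ with $\hat f(0)-\frac1g\sum_{n\le\alpha g}\hat f(n/g)$ via \eqref{U-moments}, and the final limit computation) matches the paper. But there is a genuine gap in your third step, which is the heart of the theorem: the secondary term cannot be extracted from Corollary~\ref{cor:moment-hyperelliptic}. The corollary's error term $O(q^{-n/4})$, after weighting by $\frac1g\hat f(n/2g)$ and summing over $n$, contributes $O(1/g)$ --- the same order as the secondary term you are trying to identify --- so the corollary's main term $-\eta_n\bigl(1-(1+q^{n/2})^{-1}\bigr)$ does not determine the coefficient of $1/g$. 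Concretely, the identity you propose, $\sum_{m\ge1}(1+q^m)^{-1}=\sum_{\deg v\neq1}\frac{\deg v}{q^{2\deg v}-1}$, is false: for $q=3$ the left side is approximately $0.40$ while the right side is approximately $0.12$. The discrepancy is exactly the accumulation of those $O(q^{-m/2})$ errors, which are far from negligible for small $m$ (for $m=1$ the arithmetic sum $\sum_{\deg v\mid 1,\,\deg v\neq1}$ is empty, while $(1+q)^{-1}$ is not).

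The route that works --- and the one the paper takes --- is to keep the exact arithmetic expression from Theorem~\ref{thm:moment-hyperelliptic}, namely $\langle\Tr\Theta_C^n\rangle=-\eta_n+\eta_n q^{-n/2}\sum_{\deg v\mid n/2,\,\deg v\neq1}\frac{\deg v}{1+q^{\deg v}}+O(q^{(\varepsilon-1)(g+1)+n(\varepsilon+1/2)})$, and only then swap the order of summation over $n$ and over places. Writing the even $n$ as $n=2k\deg v$ turns the secondary sum into $\sum_{\deg v\neq1}\frac{\deg v}{1+q^{\deg v}}\sum_{k}\hat f\bigl(\tfrac{k\deg v}{g}\bigr)q^{-k\deg v}$; splitting the inner sum at $k\deg v\le g\phi(g)$ with $\phi(g)=g^{-1+\varepsilon}$, Taylor expanding $\hat f$ at $0$ there, and summing the geometric series to $\frac{1}{q^{\deg v}-1}$ yields $\hat f(0)\sum_{\deg v\neq1}\frac{\deg v}{(1+q^{\deg v})(q^{\deg v}-1)}=\hat f(0)\sum_{\deg v\neq1}\frac{\deg v}{q^{2\deg v}-1}$ up to $O(\phi(g)+gq^{-g\phi(g)})$, which after the outer factor $1/g$ gives the stated $O(g^{-2+\varepsilon})$. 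Your instinct that the smoothness of $\hat f$ near $0$ must be played against the decay in $n$ is correct, but it has to be applied to the double sum over pairs $(v,k)$, not to the collapsed quantity $(1+q^{n/2})^{-1}$.
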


As we mentioned in the introduction, a vast generalization of the formula above was obtained by Bui and Florea \cite{Bui-Florea} in some recent work.

\begin{proof}
    As $\hat{f}$ is continuous, its support is contained in $[-\alpha, \alpha]$ for some $0 < \alpha < 1$. Using the Fourier expansion as above, we get
    \begin{equation}
    \label{formula-for-OLD}
    \begin{aligned}
        W_f(\Theta_C)
        &
        = \sum_{j=1} ^{2g} \sum_{k \in \mathbb{Z}} f\left(2g\left(\frac{\theta_j}{2 \pi} - k \right)\right)
        \\
        &
        = \int_\mathbb{R} f(x) \dd x + \frac{1}{2g} \sum_{n \neq 0} \hat{f}\left( \frac{n}{2g} \right) \Tr \Theta^n _C
        \\
        &
        = \hat{f}(0) + \frac{1}{g} \sum_{n=1} ^{2\alpha g}  \hat{f} \left( \frac{n}{2g} \right) \Tr \Theta_C ^n ,
    \end{aligned}
\end{equation}
    where the last equality follows from $f$ being even and the condition on the support of $\hat{f}$.
    Averaging $W_f(\Theta_C)$ over our family of curves and applying Theorem \ref{thm:moment-hyperelliptic} with
    \begin{equation*}
        0 < \varepsilon < \frac{1-\alpha}{2+2\alpha},
    \end{equation*}
    we get
    \begin{align*}
        \left\langle W_f(\Theta_C) \right\rangle_{\mathcal{H}_{g}} =& \hat{f}(0) - \frac{1}{g} \sum_{n=1} ^{\alpha g} \hat{f}\left( \frac{n}{g} \right)
        \\
        &+ \frac{1}{g}  \sum_{n=1} ^{\alpha g} \hat{f}\left( \frac{n}{g} \right) \frac{1}{q^n}  \sum_{\substack{\deg v \divides  {n} \\\deg v \neq 1} } \frac{\deg v}{1+q^{\deg v}}
        +O\left(q^{-\varepsilon g} \right)\\
        =&  \int_{\USp (2g)} W_f(U) \dd U
        \\
        &+ \frac{1}{g}  \sum_{n=1} ^{\alpha g} \hat{f}\left( \frac{n}{g} \right)  \frac{1}{q^n} \sum_{\substack{\deg v \divides  {n} \\\deg v \neq 1} } \frac{\deg v}{1+q^{\deg v}}
        +O\left(q^{-\varepsilon g} \right),
    \end{align*}
    where we note that by~\eqref{U-moments} and recalling that $f$ is even and $\operatorname{supp}\hat{f} \subset (-1, 1)$,
    \begin{equation*}
        \int_{\USp (2g)} W_f(U) \dd U = \hat{f}(0) - \frac{1}{g} \sum_{n = 1} ^{\alpha g}  \hat{f} \left( \frac{n}{g} \right).
    \end{equation*}

    We now compute
    \begin{multline} \label{first-step}
        \sum_{n=1}^{\alpha g} \hat{f} \left( \frac{n}{g} \right) \frac{1}{q^n} \sum_{\substack{\deg v \divides  {n} \\\deg v \neq 1} }  \frac{\deg{v}}{1 + q^{\deg v}}
        =\\
        \sum_{\substack {\deg{v} \leq \alpha g\\ \deg{v} \neq 1}}  \frac{\deg{v}}{1 + q^{\deg v}} \sum_{k \deg{v} \leq \alpha g}\hat{f} \left( \frac{k \deg{v}}{g} \right) \frac{1}{q^{k \deg v}}
    \end{multline}
    Suppose $\phi(g)$ is a function tending to $0$ as $g$ tends to infinity, to be specified later.
    We break the range of the inside sum of the right hand side at $g \phi(g)$.
    For the first range, we use the Taylor expansion for $\hat f$ to write $\hat{f} (x) = \hat{f}(0) + O(x) = \hat{f}(0) + o(1)$, explicitly,
    \begin{equation*}
        \hat{f} \left( \frac{k \deg{v}}{g} \right) = \hat{f}(0) + O \left( \frac{k \deg{v}}{g} \right).
    \end{equation*}
    Thus, \eqref{first-step} can be rewritten as
    \begin{eqnarray*}
       && \left( \hat{f}(0) + O \left( \phi(g) \right) \right)
        \sum_{\substack {\deg{v} \leq \alpha g\\ \deg{v} \neq 1}}
        \frac{ \deg{v} }{ 1 + q^{\deg v} }
        \left(
        \frac{1}{q^{\deg v} - 1}
        +
        O \left( q^{-g \phi(g)} \right)
        \right)\\
    &&=\hat{f}(0)
        \sum_{\substack {\deg{v} \leq \alpha g\\ \deg{v} \neq 1}}
        \frac{\deg{v}}{q^{2 \deg v} - 1}
        + O\left(\phi(g) + q^{-g \phi(g)}\right)\\
   &&=
        \hat{f}(0)
        \sum_{{\deg{v} \neq 1}}
        \frac{ \deg{v} }{ q^{2 \deg v} - 1}
        +
        O\left(\phi(g) + q^{-g \phi(g)} + q^{- 2 \alpha g}\right).
    \end{eqnarray*}
    For the remaining range,
    \begin{eqnarray*}
        &&\sum_{\substack {\deg{v} \leq \alpha g\\ \deg{v} \neq 1}}
        \frac{\deg{v}}{q^{\deg v} + 1}
        \sum_{g \phi(g) \leq k \deg{v} \leq \alpha g}
        \hat{f}
        \left( \frac{k \deg{v}}{g} \right) \frac{1}{q^{k \deg v}} \\ &&\ll
        \sum_{\substack {\deg{v} \leq \alpha g\\ \deg{v} \neq 1}}  \frac{\deg{v}}{q^{\deg v} + 1}
        \sum_{g \phi(g) \leq k \deg{v} \leq \alpha g}
        q^{-g \phi(g)}
        \\
        &&\ll  \alpha g q^{-g \phi(g)}.
    \end{eqnarray*}
    Thus, by choosing $\phi(g) = g^{-1 + \varepsilon}$, we get that
    \begin{equation*}
        \frac{1}{g} \sum_{n=1}^{\alpha g} \hat{f} \left( \frac{n}{g} \right) \frac{1}{q^n} \sum_{\substack{\deg v \divides  {n} \\\deg v \neq 1} }  \frac{\deg{v}}{ 1 + q^{\deg v} }
        =
    \frac{\hat{f}(0)}{g}   \sum_{\substack {\deg{v} \neq 1}}  \frac{\deg{v}}{q^{2 \deg v} - 1} + O \left( \frac{1}{g^{2-\varepsilon}} \right),
    \end{equation*}
    which proves the first statement.
    Taking the limit $g \rightarrow \infty$ we get the second part of the theorem.
\end{proof}

\section{General cyclic $\ell$-covers}
\label{sec:general-ell}

Let $\ell$ be an odd prime and assume that $q \equiv 1 \mod \ell$.
Let $\mathcal{H}_{g, \ell}$ be the moduli space of general $\ell$-covers of genus $g$.
Every such cover has an affine model
\begin{equation*}
  C : Y^\ell = Q(X),
\end{equation*}
where $Q( X )$ is an $\ell$-powerfree polynomial in $\FF_q[ X ]$.

We first state an explicit form of  Corollary 1.2 of~\cite{BDFKLOW} for the number of cyclic extensions with prescribed behavior at a given place $v_0$, keeping the dependence on the place $v_0$. All implied constants in the error term of this section can depend on $q$ and $\ell$.

\begin{theorem}
\label{thm:general-ell}
 Let $E(\mathbb{Z}/ \ell\mathbb{Z}, d)$ be the set of cyclic extensions of degree $\ell$ of $\mathbb{F}_q[X]$ with conductor of degree $d$,
 let $v_0$ be a place, $\omega \in  \{\text{ramified, split, inert}\}$,
  and
 $E( \mathbb{Z}/ \ell \mathbb{Z}, d, v_0, \omega)$ be the subset of $E( \mathbb{Z}/ \ell \mathbb{Z}, d)$  with prescribed behavior $\omega$ at the place $v_0$.
 Then for any $\varepsilon > 0$, we have
 \begin{equation*}
   \frac{\# E(\mathbb{Z}/ \ell \mathbb{Z}, d, v_0, \omega)}{\# E( \mathbb{Z}/ \ell \mathbb{Z}, d)} = c_{v_0, \omega} \frac{P_{v_0, \omega(d)}}{P(d)} + O\left( q^{ \left(\varepsilon - \frac{1}{2} \right) d + \varepsilon \deg v_0} \right),
 \end{equation*}
where
 \begin{align*}
   c_{v_0, \omega} = \begin{cases}
    \displaystyle \frac{ (\ell - 1) q^{- \deg v_0}}{1 + (\ell -1) q^{- \deg v_0}} & \text{if }\omega = \text{ ramified},\\
         \\
\displaystyle \frac{ 1}{ \ell (1 + (\ell - 1) q^{- \deg v_0})} & \text{if }\omega = \text{split or inert},
   \end{cases}
 \end{align*}
and where $P(x),  P_{v_0, \text{split}}(x),  P_{v_0, \text{ramified}}(x) \in \mathbb{R}[x]$ are monic polynomials of degree $\ell-2$ and
\begin{equation} \label{relation-SI}
P_{v_0,\text{inert}} (x) = (\ell - 1) P_{v_0, \text{split}}(x).\end{equation} Furthermore,
 \begin{equation} \label{bound-pp}
 \frac{P_{v_0, \text{inert}} (d)}{P(d)} = (\ell-1) + O
 \left( \frac{\deg{v_0}}{d} + \dots + \left( \frac{\deg{v_0}}{d} \right)^{\ell-2} \right).\end{equation}

Finally, if $\omega = \text{ramified}$, the error term can be written as  $O\left( q^{ \left(\varepsilon - \frac{1}{2} \right) d} \right)$, i.e., there is not dependence on the place $v_0$ in that  case.
\end{theorem}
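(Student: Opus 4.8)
We need only re-examine the error term in the case $\omega = \text{ramified}$; the plan is to revisit the generating-series argument underlying Theorem~\ref{thm:general-ell} (the place-uniform form of which is carried out in Section~\ref{sec:lindelof}) and to note that for the ramified condition the place $v_0$ enters only through an elementary rational factor, so that no Dirichlet $L$-function, and hence no appeal to the Lindel\"of bound, is needed. Let $Z(u) := \sum_{d \ge 0} \# E(\mathbb{Z}/\ell\mathbb{Z}, d)\, u^d$ be the generating series for all cyclic degree-$\ell$ extensions by conductor degree. Since $q \equiv 1 \bmod \ell$ (so $\ell$ is not the characteristic of $\FF_q$ and all ramification is tame), Kummer theory identifies these fields with the $\ell$-powerfree $D \in \FF_q[X]$ modulo $\ell$-th powers, and, after the usual inclusion--exclusion over $\ell$-th-power divisors and the bookkeeping at the place at infinity, $Z(u) = \zeta_{\FF_q(X)}(u)^{\ell - 1}\, H(u)$ with $H$ holomorphic on $|u| < q^{-1/2}$; in particular the only singularity of $Z$ in that disc is the pole of order $\ell - 1$ at $u = q^{-1}$, whose residue gives the main term $P(d)\, q^d$, and shifting the Perron contour out to $|u| = q^{-1/2}\bigl(1 - O(1/d)\bigr)$ yields an error $O\bigl(q^{(1/2 + \varepsilon)d}\bigr)$ in which no place appears.

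First I would record that ``$v_0$ is ramified in $\FF_q(X)(\sqrt[\ell]{D})$'' is equivalent, by tameness, to ``$P_{v_0} \divides D$'', where $P_{v_0}$ is the monic irreducible attached to $v_0$ (the infinite place being treated in the same way). Deleting from the $v_0$-Euler factor $1 + (\ell - 1) u^{\deg v_0}$ of $Z(u)$ the part coming from $P_{v_0} \ndivides D$ gives
\begin{equation*}
  \sum_{d \ge 0} \# E(\mathbb{Z}/\ell\mathbb{Z}, d, v_0, \text{ramified})\, u^d \;=\; Z(u)\, g_{v_0}(u), \qquad g_{v_0}(u) := \frac{(\ell - 1)\, u^{\deg v_0}}{1 + (\ell - 1)\, u^{\deg v_0}}.
\end{equation*}
The factor $g_{v_0}$ is a fixed rational function, holomorphic for $|u| < (\ell - 1)^{-1/\deg v_0}$; once $\deg v_0$ is larger than a bound $D_0 = D_0(q, \ell)$, so that the pole of $g_{v_0}$ lies comfortably outside $|u| \le q^{-1/2}$, it is holomorphic on the whole region used above and satisfies $|g_{v_0}(u)| \ll (\ell - 1)\, q^{(\varepsilon - 1/2)\deg v_0} \ll 1$ on the shifted contour, uniformly in $v_0$. (By contrast, for $\omega \in \{\text{split}, \text{inert}\}$ one is instead replacing $Z(u)$ by combinations of twists $L(u, \chi)$ by the nontrivial order-$\ell$ Dirichlet characters $\chi$ modulo $P_{v_0}$, whose size on that contour is controlled only by the Lindel\"of bound $|L(u, \chi)| \ll q^{\varepsilon \deg v_0}$ of Section~\ref{sec:lindelof} --- precisely the source of the $q^{\varepsilon \deg v_0}$ in the error, and absent here.)

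The conclusion is then a copy of the unconditional estimate applied to $Z(u)\, g_{v_0}(u)$. For $\deg v_0 > D_0(q, \ell)$: since $g_{v_0}$ is analytic at $u = q^{-1}$ the pole of $Z(u) g_{v_0}(u)$ there still has order $\ell - 1$, its residue has (by Leibniz) the same polynomial structure in $d$, scaled so as to reproduce the stated main term $c_{v_0, \text{ramified}} \tfrac{P_{v_0, \text{ramified}}(d)}{P(d)} \# E(\mathbb{Z}/\ell\mathbb{Z}, d)$, and the remaining contour integral is $\ll q^{(1/2 + \varepsilon)d} \sup_{|u| = R} |g_{v_0}(u)| \ll q^{(1/2 + \varepsilon)d}$ with no dependence on $v_0$; dividing by $\# E(\mathbb{Z}/\ell\mathbb{Z}, d) \asymp q^d P(d)$ gives relative error $O\bigl(q^{(\varepsilon - 1/2)d}\bigr)$. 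For the finitely many $v_0$ with $\deg v_0 \le D_0(q, \ell)$ there is nothing to prove: the already-established bound $O\bigl(q^{(\varepsilon - 1/2)d + \varepsilon \deg v_0}\bigr)$ is $O\bigl(q^{(\varepsilon - 1/2)d}\bigr)$ since $q^{\varepsilon \deg v_0} \le q^{\varepsilon D_0(q, \ell)}$ is an absolute constant (implied constants in this section may depend on $q$ and $\ell$). Combining the two cases proves the claim.

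The one point needing care is the location of the single simple pole of $g_{v_0}$ relative to the Perron contour --- hence the split at $D_0(q, \ell)$. If one prefers a single uniform contour instead, the extra residue picked up from that pole when $(\ell - 1)^{-1/\deg v_0} < q^{-1/2}$ is likewise $\ll q^{(1/2 + \varepsilon)d}$ with no bad $v_0$-dependence, since it involves only $Z$ evaluated at one point of $\{|u| \le q^{-1/2}\}$ together with the residue of $g_{v_0}$, both of size $O_{q, \ell}(1)$. Thus the ramified case needs no analytic input beyond the structure of $Z(u)$ already used for Theorem~\ref{thm:general-ell}; its entire content is the observation that the ramified local condition is a rational-function twist rather than an $L$-function twist.
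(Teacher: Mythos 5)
Your argument for the ramified case is sound and is essentially the paper's own reasoning made explicit: as explained in Section~\ref{sec:lindelof}, the ramified condition at $v_0$ modifies the generating series only by an elementary local factor (your $g_{v_0}(u)=(\ell-1)u^{\deg v_0}/(1+(\ell-1)u^{\deg v_0})$) that is absolutely bounded on the shifted contour uniformly in $v_0$, whereas the split and inert conditions are detected by nontrivial order-$\ell$ Dirichlet characters of modulus $v_0$, whose contribution on that contour is controlled only through the Lindel\"of bound of Theorem~\ref{thm:lh} and is the source of the $q^{\varepsilon \deg v_0}$ in the error term. Your split at $D_0(q,\ell)$ to keep the pole of $g_{v_0}$ away from the contour is a legitimate way to make this uniform.

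The gap is that you have proved only the final clause of the theorem. The statement also asserts the asymptotics for $\omega\in\{\text{split},\text{inert}\}$ with the explicit error $O\bigl(q^{(\varepsilon-1/2)d+\varepsilon\deg v_0}\bigr)$ --- which you mention only parenthetically as "already established," when establishing it (via the $L$-function decomposition and Theorem~\ref{thm:lh}) is precisely the content being claimed --- and, more seriously, it asserts the bound~\eqref{bound-pp} on $P_{v_0,\text{inert}}(d)/P(d)$, which your proposal never touches. That bound is not cosmetic: since $P_{v_0,\text{inert}}(x)=(\ell-1)x^{\ell-2}+a_{v_0,\ell-3}x^{\ell-3}+\dots+a_{v_0,0}$ has degree $\ell-2\geq 1$ for $\ell\geq 3$, one must control how its lower-order coefficients grow with $\deg v_0$; the paper extracts $a_{v_0,i}\ll(\deg v_0)^{\ell-2-i}$ from the residue computation at $u=q^{-1}$ in the generating series of~\cite{BDFKLOW}, and this is exactly the input needed in the proof of Theorem~\ref{theorem:moment-ell} to bound the term $O\bigl(q^{n/\ell}n^{\ell-2}/d\bigr)$. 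Without an argument for~\eqref{bound-pp} (and for the monicity, degree, and relation~\eqref{relation-SI} of the polynomials, which come from the same residue analysis), the theorem as stated is not proved; your observation about the ramified case should be a final remark appended to that analysis, not a substitute for it.
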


\begin{proof}
This follows from Corollary 1.2 of~\cite{BDFKLOW}, keeping the dependence of the error term on the place $v_0$ as done in Section~\ref{sec:lindelof}. This gives
\begin{align*}
    \# E( \mathbb{Z}/ \ell \mathbb{Z}, d) &=  C_\ell q^{d} P(d) + O \left( q^{\left(\frac{1}{2} + \varepsilon \right)d} \right)\\
    \# E( \mathbb{Z}/ \ell \mathbb{Z}, d, v_0, \text{ramified}) &=  c_{v_0, \omega} C_\ell q^{d} P_{v_0,\omega} (d) + O \left( q^{\left(\frac{1}{2} + \varepsilon \right)d} \right)\\
    \# E( \mathbb{Z}/ \ell \mathbb{Z}, d, v_0, \text{split}) &=
  \# E( \mathbb{Z}/ \ell \mathbb{Z}, d, v_0, \text{inert})\\
  &= c_{v_0, \omega} C_\ell q^d P_{v_0,\omega}(d) + O \left( q^{\left( \frac{1}{2} + \varepsilon \right) d + \varepsilon \deg v_0} \right)
\end{align*}
To bound the quotient $\displaystyle \frac{P_{v_0, \text{inert}} (d)}{P(d)}$,
we also need the dependence on the coefficients of
 $$P_{v_0, \text{inert}} (x) = (\ell-1) x^{\ell-2} + a_{v_0, \ell-3} x^{\ell-3} + \dots + a_{v_0, 0}$$ for the place $v_0$. It follows from the computations of~\cite{BDFKLOW} on page 4327 that $$a_{v_0,i} \ll \left( \deg{v_0} \right)^{\ell-2-i} \;\;\mbox{for $0 \leq i \leq \ell-3$}.$$  (This comes from the residue computation at $u=q^{-1}$). The bound \eqref{bound-pp} then follows.
\end{proof}

Recall that for a function field extension $L/K$ cyclic of order $\ell$, the discriminant and conductor of $L/K$ are related by
\begin{equation*}
    \deg \operatorname{Disc}(L/K) = (\ell-1) \deg \operatorname{Cond}(L/K),
\end{equation*}
(as given in Theorem~7.16 of~\cite{rosen2002number}) and from the Riemann--Hurwitz formula, we have
\begin{equation*}
    2g + 2 (\ell-1) = \deg \operatorname{Disc}(L/K).
\end{equation*}
Thus we can interpret Theorem~\ref{thm:general-ell} in terms of the genus $g$ by taking
\begin{equation}
    \label{relation-dg}
    d = \frac{2g}{\ell-1} + 2.
\end{equation}
Further, put
\begin{equation*}
    \lambda_n :=
    \begin{cases}
        1 & \ell \divides  n,\\
        0 & \text{otherwise}.
    \end{cases}
\end{equation*}

\begin{lemma}
    \label{lemma:traces-ell}
    Let $C$ be a given curve in $\mathcal{H}_{g,\ell}$, $\FF_q(C)$ its function field and $\Tr \Theta_C ^n$ be the $n$-th power of the trace of $C$.
    Then
  \begin{equation}
    \label{eqn:expansion-general}
    -q^{\frac{n}{2}} \Tr \Theta_C ^n
    =
    (\ell-1) \sum_{\substack{ \deg v \divides n\\ v \text{ split in } \\ \FF_q(C) }} \deg v
    + \lambda_n  \ell \sum_{\substack{\deg v \divides  \frac{n}{\ell} \\ v \text{ inert in } \\ \FF_q(C) }} \deg v
    - \sum_{\substack{\deg v \divides  n \\ v \text{ inert in } \\ \FF_q(C) }} \deg v,
  \end{equation}
  where the sums are over all places $v$ of $\FF_q(X)$ (including infinity) with the prescribed behavior.
\end{lemma}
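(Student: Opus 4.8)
The plan is to mirror the proof of Lemma~\ref{lemma:hyperelliptic} but now for a cyclic $\ell$-cover $C : Y^\ell = Q(X)$. The starting point is the factorization of the quotient of zeta functions. For a cyclic extension $\FF_q(C)/\FF_q(X)$ of degree $\ell$, the quotient $\zeta_{\FF_q(C)}(s)/\zeta_{\FF_q(X)}(s)$ factors as a product of $\ell-1$ Dirichlet $L$-functions $L(s,\chi^j)$ for $j = 1, \dots, \ell-1$, where $\chi$ is a character of order $\ell$ attached to the cover; equivalently
\begin{equation*}
    P_C(q^{-s}) = \prod_{j=1}^{2g}\bigl(1 - q^{1/2}q^{-s}e^{i\theta_j}\bigr) = \frac{\zeta_{\FF_q(C)}(s)}{\zeta_{\FF_q(X)}(s)} = \prod_{j=1}^{\ell-1} L(s,\chi^j).
\end{equation*}
First I would take the logarithmic derivative of both sides with respect to $u = q^{-s}$, exactly as in Lemma~\ref{lemma:hyperelliptic}. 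The left side contributes $-\sum_{n\ge 1}\bigl(\sum_j q^{n/2}e^{in\theta_j}\bigr)u^n = -\sum_{n\ge1} q^{n/2}\Tr\Theta_C^n\, u^n$, so the coefficient of $u^n$ is $-q^{n/2}\Tr\Theta_C^n$. The right side is a sum over places $v$ of $\FF_q(X)$ of the local Euler factors.

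The key computation is then the local contribution at each finite (or infinite) place $v$, sorted by splitting type in $\FF_q(C)$. For a place $v$ of degree $\deg v$, the number of places of $\FF_q(C)$ above $v$ and their residue degrees are governed by whether $v$ is split (so $\ell$ places, each of degree $\deg v$), inert (one place of degree $\ell\deg v$), or ramified (one place of degree $\deg v$, but contributing to the conductor, hence finitely many such $v$ and negligible). Writing out $-u\,\frac{d}{du}\log(1-u^{\deg w})$ for each place $w$ above $v$ and summing, a split place $v$ contributes $\ell \deg v$ to the coefficient of $u^n$ whenever $\deg v \divides n$; an inert place $v$ contributes $\ell\deg v$ to the coefficient of $u^n$ whenever $\ell\deg v \divides n$, i.e. $\deg v \divides n/\ell$ (here the factor $\lambda_n$ enters, since this only happens when $\ell\divides n$). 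But one must remember that these are contributions to $\zeta_{\FF_q(C)}$ alone, and we must subtract the corresponding contribution from $\zeta_{\FF_q(X)}$, which simply adds $\deg v$ to the coefficient of $u^n$ for every place $v$ with $\deg v \divides n$ regardless of splitting type. Subtracting this off turns the split contribution $\ell\deg v$ into $(\ell-1)\deg v$ and the inert contribution $\deg v$ (from the single place above, of degree $\ell\deg v$, only relevant when $\deg v\divides n/\ell$) into $\ell\deg v$ when $\ell\divides n$ but $-\deg v$ when $\deg v\divides n$ in general; collecting the three resulting sums gives exactly the right-hand side of~\eqref{eqn:expansion-general}.

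The main obstacle — really the only place care is needed — is bookkeeping the inert places correctly: an inert place $v$ of degree $\deg v$ has a single place above it of degree $\ell\deg v$, so in $\zeta_{\FF_q(C)}$ it contributes to $u^n$ only when $\ell\deg v\divides n$ (with weight $\ell\deg v$), whereas in $\zeta_{\FF_q(X)}$ it contributes whenever $\deg v\divides n$ (with weight $\deg v$). Taking the difference produces the two separate inert sums in~\eqref{eqn:expansion-general}: one sum over inert $v$ with $\deg v\divides n/\ell$ carrying the coefficient $+\ell$ (guarded by $\lambda_n$), and one sum over inert $v$ with $\deg v\divides n$ carrying the coefficient $-1$. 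One should also note that ramified places contribute only finitely often (they divide the conductor, whose degree is fixed in terms of $g$) and to $u^n$ with bounded weight, but in fact for the ramified place $v$ the place above has degree $\deg v$ in both numerator and denominator, so its contributions cancel identically and it drops out entirely — which is why~\eqref{eqn:expansion-general} has no ramified term. Multiplying through by $-1$ and comparing coefficients of $u^n$ then yields the stated identity; since everything is an identity of formal power series (or equivalently holds for all $n$ by comparing the rational functions on both sides), no error term is needed here.
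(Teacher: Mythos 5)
Your proposal is correct and follows essentially the same route as the paper, which simply takes the logarithmic derivative of $P_C(q^{-s}) = \zeta_{\FF_q(C)}(s)/\zeta_{\FF_q(X)}(s)$ expressed as an Euler product and compares coefficients by splitting type (the paper's proof is literally ``mutatis mutandis'' the hyperelliptic case). Your bookkeeping of the split, inert, and ramified local factors is accurate, including the cancellation at ramified places.
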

\begin{proof}
    \emph{Mutatis mutandis} Lemma~\ref{lemma:hyperelliptic}.
\end{proof}

\begin{theorem} For any $\epsilon > 0$, we have
    \label{theorem:moment-ell}
    \begin{align*}
        \left\langle -q^{\frac{n}{2}} \Tr \Theta _C ^{n} \right\rangle_{\mathcal{H}_{g,\ell}} &=
        \lambda_n
        \sum_{\deg v \divides  \frac{n}{\ell}}
        \frac{(\ell-1) \deg v}{1+(\ell-1)q^{-\deg v}} +
         O \left( \frac{q^{n/\ell} n^{\ell-2}}{d} +
        q^{\left( \varepsilon - \frac{1}{2} \right)d + n(1 + \varepsilon)}
        \right), \end{align*}
    where $d$ is defined by~\eqref{relation-dg}.
\end{theorem}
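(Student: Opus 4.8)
The plan is to mimic the proof of Theorem~\ref{thm:moment-hyperelliptic} \emph{mutatis mutandis}, now using the three-term expansion of Lemma~\ref{lemma:traces-ell}. First I would average equation~\eqref{eqn:expansion-general} over $C \in \mathcal{H}_{g,\ell}$, identifying this family with $E(\mathbb{Z}/\ell\mathbb{Z}, d)$ through the discriminant--conductor--genus relations recalled above together with \eqref{relation-dg}, and then swap the order of summation. This expresses $\left\langle -q^{n/2}\Tr\Theta_C^n\right\rangle_{\mathcal{H}_{g,\ell}}$ as a sum, over places $v$ of $\FF_q(X)$ (including $\infty$) with $\deg v \divides n$ or $\deg v \divides n/\ell$, of $\deg v$ weighted by the proportions $\# E(\mathbb{Z}/\ell\mathbb{Z}, d, v, \omega)/\# E(\mathbb{Z}/\ell\mathbb{Z}, d)$ for $\omega \in \{\text{split}, \text{inert}\}$. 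One then inserts the asymptotics of Theorem~\ref{thm:general-ell}.

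The key algebraic point is that the split contribution and the unrestricted inert contribution cancel at the level of main terms. Indeed $c_{v,\text{split}} = c_{v,\text{inert}}$ and, by \eqref{relation-SI}, $P_{v,\text{inert}}(x) = (\ell-1)P_{v,\text{split}}(x)$, so the main term $(\ell-1)\sum_{\deg v \divides n}\deg v\, c_{v,\text{split}}P_{v,\text{split}}(d)/P(d)$ coming from the first sum in \eqref{eqn:expansion-general} is exactly minus the main term $\sum_{\deg v \divides n}\deg v\, c_{v,\text{inert}}P_{v,\text{inert}}(d)/P(d)$ coming from the third. What survives is the middle sum $\lambda_n\ell\sum_{\deg v \divides n/\ell}\deg v\, c_{v,\text{inert}}P_{v,\text{inert}}(d)/P(d)$; since $\ell\,c_{v,\text{inert}} = (1+(\ell-1)q^{-\deg v})^{-1}$ and, by \eqref{bound-pp}, $P_{v,\text{inert}}(d)/P(d) = (\ell-1) + O\bigl(\sum_{j=1}^{\ell-2}(\deg v/d)^j\bigr)$, its main part is precisely $\lambda_n\sum_{\deg v \divides n/\ell}(\ell-1)\deg v/(1+(\ell-1)q^{-\deg v})$, as claimed.

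It then remains to collect the errors. The error terms from the split and inert counts do \emph{not} cancel, leaving $O\bigl(\sum_{\deg v \divides n}\deg v\, q^{(\varepsilon-1/2)d+\varepsilon\deg v}\bigr)$; pulling out $q^{(\varepsilon-1/2)d}$, using $q^{\varepsilon\deg v}\le q^{\varepsilon n}$, and invoking the prime number theorem for $\FF_q[X]$ in the form $\sum_{\deg v \divides n}\deg v \ll q^n$, this becomes $O(q^{(\varepsilon-1/2)d+n(1+\varepsilon)})$ after renaming $\varepsilon$ (the analogous error from the middle sum is bounded by $q^{(\varepsilon-1/2)d+(n/\ell)(1+\varepsilon)}$ and is absorbed). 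The error from \eqref{bound-pp} contributes $O\bigl(\sum_{\deg v \divides n/\ell}\deg v\sum_{j=1}^{\ell-2}(\deg v/d)^j\bigr)$; since $\sum_{j=1}^{\ell-2}(\deg v/d)^j \ll (\deg v)^{\ell-2}/d$ and $\deg v \le n/\ell$ throughout this sum, this is $\ll d^{-1} n^{\ell-2}\sum_{\deg v \divides n/\ell}\deg v \ll n^{\ell-2}q^{n/\ell}/d$, again by the prime number theorem. Adding the two pieces gives exactly the stated error term.

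I expect the difficulty here to be bookkeeping rather than conceptual. The essential requirement is the \emph{place-dependent} version of Theorem~\ref{thm:general-ell}: a bound with an implicit constant of the shape $O_{v_0}(\cdot)$ would be useless once summed over all $\deg v \divides n$, which is exactly why the argument must draw on the explicit Lindel\"of-type estimate of Section~\ref{sec:lindelof}. The other points needing care are to cancel only the main terms of the split and inert contributions while retaining their (surviving) error terms, and to control the polynomial-ratio error \eqref{bound-pp} uniformly in $\deg v$. A minor check is that the place at infinity is covered by Theorem~\ref{thm:general-ell} on the same footing as the finite places, so that---unlike in the hyperelliptic computation---no separate ``place at infinity'' term need be extracted.
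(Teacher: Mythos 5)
Your proposal is correct and follows the paper's proof essentially step for step: average \eqref{eqn:expansion-general} over $E(\mathbb{Z}/\ell\mathbb{Z},d)$, cancel the split and unrestricted-inert main terms via $c_{v,\text{split}}=c_{v,\text{inert}}$ and \eqref{relation-SI}, extract the surviving middle sum with \eqref{bound-pp}, and bound both residual error terms by the prime number theorem for $\FF_q[X]$. Your accounting of the two error contributions reproduces the stated bound exactly, so there is nothing to add.
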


\begin{proof}
    We average \eqref{eqn:expansion-general} over $E(\mathbb{Z} / \ell \mathbb{Z}, d)$
    with $d=2g/(\ell-1) + 2$ using Theorem~\ref{thm:general-ell} to obtain
    \begin{align*}
        \left\langle -q^{\frac{n}{2} } \Tr \Theta_C ^n \right\rangle_{\mathcal{H}_{g,\ell}}
        = &
        (\ell-1) \sum_{\deg v \divides n}
        \deg v
        \left(
        c_{v, \text{split}}
        \frac{P_{v,\text{split}}(d)}{P(d)}
        +
        O \left(
        q^{\left( \varepsilon - \frac{1}{2} \right)d + \varepsilon \deg v}
        \right)
        \right)\\
        &+
        \lambda_n \ell\sum_{\deg v \divides  \frac{n}{\ell}}
        \deg v
        \left(
        c_{v, \text{inert}}
        \frac{P_{v, \text{inert}}(d)}{P(d)} +O \left(q^{\left( \varepsilon - \frac{1}{2} \right)d + \varepsilon \deg v} \right)
        \right)\\
        &-
        \sum_{\deg v \divides  n}
        \deg v
        \left(
        c_{v, \text{inert}}
        \frac{P_{v,\text{inert}}(d)}{P(d)}  + O \left( q^{\left( \varepsilon - \frac{1}{2} \right)d + \varepsilon \deg v} \right)
        \right).
    \end{align*}
    Since $c_{v, \text{inert}} = c_{v, \text{split}}$ and $P_{v,\text{inert}}(x) = (\ell - 1) P_{v,\text{split}}(x)$, the main term in the first and the third sum cancel. Thus, using \eqref{bound-pp}
    \begin{align*}
        \left\langle -q^{\frac{n}{2} } \Tr \Theta_C ^n \right\rangle_{\mathcal{H}_{g,\ell}}
        & = \lambda_n \ell
        \sum_{\deg v \divides  \frac{n}{\ell}} \frac{P_{v, \text{inert}}(d)}{P(d)}
        c_{v, \text{inert}}
        \deg v+
        O \left(
        q^{\left( \varepsilon - \frac{1}{2} \right)d}
        \sum_{\deg v \divides  n}
        \deg v
        q^{\varepsilon \deg v}
        \right)\\
        & = \lambda_n
        \sum_{\deg v \divides  \frac{n}{\ell}}
        \frac{(\ell-1) \deg v}{1+(\ell-1)q^{-\deg v}} + O \left( \frac{1}{d} \sum_{\deg v \divides  \frac{n}{\ell}}  \deg{v}^{\ell-2} \right)
        + O \left(
        q^{\left( \varepsilon - \frac{1}{2} \right)d + n(1 + \varepsilon)}
        \right). \\
        &= \lambda_n
        \sum_{\deg v \divides  \frac{n}{\ell}}
        \frac{(\ell-1) \deg v}{1+(\ell-1)q^{-\deg v}} +
         O \left( \frac{q^{n/\ell} n^{\ell-2}}{d} +
        q^{\left( \varepsilon - \frac{1}{2} \right)d + n(1 + \varepsilon)}
        \right). \\
    \end{align*}
\end{proof}

The previous theorem agrees with the corresponding statistics over the unitary group $\U(2g)$, as we have, by~\cite{eigenvalues},
\begin{equation*}
\int_{\U(2g)} \Tr U^n \dd U =
    \begin{cases}
        2g & n = 0,\\
        0 & n \neq 0.
    \end{cases}
\end{equation*}

\begin{corollary}
    \label{cor:moment-ellcovers}
    For any $\varepsilon > 0$ and $n$ such that $6 \log_q{g} < n < (1 - \varepsilon) \left(\frac{2 g}{\ell - 1} + 2 \right)$, as $g \rightarrow \infty$ we have
    \begin{align*}
        \left\langle \Tr \Theta_C ^n \right\rangle_{\mathcal{H}_{g,\ell}}
        &=  \int_{\U(2g)} \Tr U^n \dd U + O\left( \frac{1}{g} \right).
    \end{align*}
\end{corollary}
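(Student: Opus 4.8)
The plan is to start from the asymptotic for $\langle -q^{n/2}\Tr\Theta_C^n\rangle_{\mathcal{H}_{g,\ell}}$ given in Theorem~\ref{theorem:moment-ell}, divide by $-q^{n/2}$, and show that in the stated range of $n$ all terms on the right vanish in the limit at the rate $O(1/g)$, so that the average matches $\int_{\U(2g)}\Tr U^n\dd U = 0$ (valid here since $n>0$). First I would rewrite the main term: by the same manipulation with~\eqref{eqn:pnt} used at the end of the proof of Theorem~\ref{thm:moment-hyperelliptic}, the sum $\sum_{\deg v\mid n/\ell}\frac{(\ell-1)\deg v}{1+(\ell-1)q^{-\deg v}}$ equals $(\ell-1)\bigl(q^{n/\ell}+O(1)\bigr) - (\ell-1)\sum_{\deg v\mid n/\ell}\frac{(\ell-1)\deg v}{q^{\deg v}+(\ell-1)}$, and the subtracted sum is $O(q^{n/(2\ell)})$ by the prime number theorem for $\F_q[X]$; so the whole main term is $\lambda_n(\ell-1)q^{n/\ell}+O(q^{n/(2\ell)})$. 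Dividing the statement of Theorem~\ref{theorem:moment-ell} by $-q^{n/2}$ therefore gives
\begin{equation*}
  \left\langle \Tr \Theta_C^n \right\rangle_{\mathcal{H}_{g,\ell}}
  = O\!\left( q^{n/\ell - n/2} \right)
  + O\!\left( \frac{q^{n/\ell} n^{\ell-2}}{d\, q^{n/2}} \right)
  + O\!\left( q^{(\varepsilon-\frac12)d + n(1+\varepsilon) - n/2} \right),
\end{equation*}
with $d = \frac{2g}{\ell-1}+2$.

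Next I would check that each of these three error terms is $O(1/g)$ in the stated range. Since $\ell\ge 3$, we have $\frac1\ell-\frac12\le \frac13-\frac12 = -\frac16 < 0$, so the first term is $q^{-n/6}\le q^{-\log_q g} = 1/g$ once $n\ge 6\log_q g$; this is exactly the lower bound $6\log_q g < n$. The second term is even smaller than the first by a factor $n^{\ell-2}/d$, which is $O(g^{\varepsilon'})$ and hence absorbed (or, being slightly careful, $n^{\ell-2}q^{-n/6}$ still tends to zero faster than $1/g$ since $q^{-n/6}\le 1/g$ forces $n = O(\log g)$, so $n^{\ell-2}q^{n/\ell-n/2}\ll (\log g)^{\ell-2}/g = O(1/g)$ after adjusting the constant in the lower bound on $n$, or alternatively by simply using $n/\ell - n/2 \le -n/6$ and noting $n^{\ell-2}q^{-n/12}\to 0$). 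For the third term, the exponent is $(\varepsilon-\frac12)d + (\frac12+\varepsilon)n$; using the upper bound $n < (1-\varepsilon)(\frac{2g}{\ell-1}+2) = (1-\varepsilon)d$, this exponent is at most $\bigl[(\varepsilon-\frac12) + (\frac12+\varepsilon)(1-\varepsilon)\bigr]d = \bigl[\tfrac{3\varepsilon}{2} - \varepsilon^2\bigr]\cdot(\text{wait})$ — more carefully, $(\varepsilon-\tfrac12)+(\tfrac12+\varepsilon)(1-\varepsilon) = \varepsilon - \tfrac12 + \tfrac12 + \varepsilon - \tfrac{\varepsilon}{2} - \varepsilon^2 = \tfrac{3\varepsilon}{2} - \varepsilon^2 - \varepsilon \cdot 0$; I will recompute this cleanly in the writeup, the point being that for $\varepsilon$ small enough (depending only on the fixed gap in the hypotheses) this coefficient of $d$ is negative, so the third term is $q^{-cd}$ for some $c>0$, which is $O(q^{-g})= O(1/g)$.

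The one genuinely delicate point — and the step I expect to be the main obstacle — is the interaction between the lower bound $n > 6\log_q g$ and the factor $n^{\ell-2}$ (and the factor $1/d\sim 1/g$) in the second error term: one must verify that $6\log_q g$ is large enough that $q^{(1/\ell - 1/2)n}n^{\ell-2}$ beats $1/g$ with room to spare, uniformly in $\ell$. Since all implied constants are allowed to depend on $\ell$ and $q$, this is a matter of checking that $n\mapsto n^{\ell-2}q^{-n/6}$ is decreasing for $n$ past a threshold depending only on $\ell$, and that $6\log_q g$ exceeds that threshold for $g$ large; then $n^{\ell-2}q^{-n/6}\le (6\log_q g)^{\ell-2}g^{-1}$, which is $o(1/g^{1-\varepsilon})$, hence certainly $O(1/g)$ — here I would, if needed, absorb the logarithmic loss by noting the claim only asserts $O(1/g)$, not a sharper rate, so a crude bound suffices. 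Finally I would assemble the three estimates, note that $\int_{\U(2g)}\Tr U^n\dd U = 0$ for $n\ge 1$, and conclude.
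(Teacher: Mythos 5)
Your overall route is the same as the paper's: divide Theorem~\ref{theorem:moment-ell} by $-q^{n/2}$, use $\tfrac{1}{\ell}-\tfrac12\le-\tfrac16$ (since $\ell\ge 3$) together with the lower bound $n>6\log_q g$ to make the main term and the $q^{n/\ell-n/2}n^{\ell-2}/d$ term small, and use the upper bound on $n$ to make the $q^{(\varepsilon-\frac12)d+n(\frac12+\varepsilon)}$ term exponentially small; the paper's proof is exactly this, carried out ``as in Corollary~\ref{cor:moment-hyperelliptic}''. Two points need to be nailed down. The first is the one you stumbled on: the $\varepsilon$ in Theorem~\ref{theorem:moment-ell} is a free parameter, independent of the $\varepsilon$ appearing in the hypothesis $n<(1-\varepsilon)\bigl(\tfrac{2g}{\ell-1}+2\bigr)$. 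If you use the same symbol for both, the coefficient of $d$ in the exponent is $(\varepsilon-\tfrac12)+(\tfrac12+\varepsilon)(1-\varepsilon)=\tfrac{3\varepsilon}{2}-\varepsilon^2>0$, and no choice of $\varepsilon$ rescues it. The fix is to fix the hypothesis' $\varepsilon$ and then apply the theorem with a second parameter $\varepsilon''$ satisfying $(\varepsilon''-\tfrac12)+(\tfrac12+\varepsilon'')(1-\varepsilon)<0$, i.e.\ $\varepsilon''<\tfrac{\varepsilon}{2(2-\varepsilon)}$; this is precisely what the paper does in Corollary~\ref{cor:moment-hyperelliptic} (where the two parameters are called $\varepsilon$ and $\varepsilon'$), and what ``proceed as in the proof of Corollary~\ref{cor:moment-hyperelliptic}'' means here.

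The second point concerns the $n^{\ell-2}$ factor. Your claim that $(6\log_q g)^{\ell-2}g^{-1}$ is ``certainly $O(1/g)$'' is false as stated: it loses a factor $(\log g)^{\ell-2}$, and a logarithmic loss cannot be absorbed into $O(1/g)$. But you do not need that claim, because in your own display the term carrying $n^{\ell-2}$ also carries the factor $1/d\asymp 1/g$: since $h(n)=n^{\ell-2}q^{-n/6}$ is decreasing for $n$ beyond a threshold depending only on $\ell$ and $q$, for $g$ large one has $q^{n/\ell-n/2}n^{\ell-2}/d\le (6\log_q g)^{\ell-2}/(gd)=o(1/g)$ throughout the range. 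Keep the $1/d$ and the estimate closes; your decomposition of the main term via~\eqref{eqn:pnt} ensures that the main term itself contributes only $O(q^{-n/6})=O(1/g)$ with no polynomial factor. (The paper's own displayed bound $O\bigl(q^{n/\ell-n/2}n^{\ell-2}\bigr)$ discards the $1/d$ and, read literally, suffers the same logarithmic loss near $n=6\log_q g$, so your version of this step is in fact the cleaner one.)
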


\begin{proof}
Using Theorem~\ref{theorem:moment-ell}, we have that
\begin{align*} \left\langle \Tr \Theta_C ^n \right\rangle_{\mathcal{H}_{g,\ell}} &=
        O \left( q^{n/\ell-n/2} n^{\ell-2} + q^{\left(2\varepsilon - 1 \right)g/(\ell-1) + n(1/2 + \varepsilon)} \right), \end{align*}
and we proceed as in the proof of Corollary~\ref{cor:moment-hyperelliptic}.
\end{proof}
\begin{theorem}
    \label{thm:1level-density-ell}
    Let $f$ be an even test function in the Schwartz space $\mathcal{S}(\mathbb{R})$ with $ \operatorname{supp} \hat{f} \subset \left(-\frac{1}{\ell - 1}, \frac{1}{\ell - 1} \right)$, then
    \begin{eqnarray*}
        \left \langle W_f (\Theta_C) \right\rangle_{\mathcal{H}_{g,\ell}} &=&  \int_{\U (2g)} W_f(U) \dd U \\
        &&- \hat{f}(0) \frac{\ell-1}{g} \sum_{v} \frac{\deg v}{(1 + (\ell-1) q^{-\deg v})(q^{\ell\deg v/2} - 1)}
        + O\left( \frac{1}{g^{2 - \varepsilon}} \right),
    \end{eqnarray*}
    where the sum is over all places $v$ of $\FF_q(X)$.
    Moreover,
    \begin{align*}
        \lim_{g \rightarrow \infty} \left \langle W_f (\Theta_C) \right\rangle_{\mathcal{H}_{g,\ell}}
        & = \lim_{g \rightarrow \infty} \int_{\U (2g)} W_f(U) \dd U\\
        & = \int_{\mathbb{R}} f(x) \dd x = \hat{f}(0).
    \end{align*}
\end{theorem}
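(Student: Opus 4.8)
The plan is to run the argument of Theorem~\ref{thm:1level-density-hyperelliptic} almost verbatim, feeding in the $\ell$-cover moment estimate of Theorem~\ref{theorem:moment-ell} in place of Theorem~\ref{thm:moment-hyperelliptic}. Fix $0<\alpha<\tfrac1{\ell-1}$ with $\operatorname{supp}\hat{f}\subseteq[-\alpha,\alpha]$. Using the Fourier expansion~\eqref{formula-for-OLD} together with $f$ even, one writes
\[
    W_f(\Theta_C) = \hat{f}(0) + \frac{1}{g}\sum_{n=1}^{2\alpha g}\hat{f}\!\left(\frac{n}{2g}\right)\Tr\Theta_C^n .
\]
Because $\int_{\U(2g)}\Tr U^n\dd U = 0$ for $n\neq 0$, the same expansion gives $\int_{\U(2g)}W_f(U)\dd U = \hat{f}(0)$ for every $g$, so once the first formula is proved the limit statement is immediate. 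Averaging the display above over $\mathcal{H}_{g,\ell}$ and applying Theorem~\ref{theorem:moment-ell} (which holds for all $n\ge 1$), the factor $\lambda_n$ annihilates every term with $\ell\nmid n$ in the main term, so after the substitution $n=\ell m$ the main contribution to $\langle W_f(\Theta_C)\rangle_{\mathcal{H}_{g,\ell}} - \int_{\U(2g)}W_f(U)\dd U$ is
\[
    -\frac{\ell-1}{g}\sum_{m=1}^{\lfloor 2\alpha g/\ell\rfloor}\hat{f}\!\left(\frac{\ell m}{2g}\right)q^{-\ell m/2}\sum_{\deg v\divides m}\frac{\deg v}{1+(\ell-1)q^{-\deg v}} .
\]

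First I would dispose of the error terms produced by Theorem~\ref{theorem:moment-ell}. Multiplying by $q^{-n/2}$ and forming $\frac1g\sum_{n\le 2\alpha g}$, the piece $q^{-n/2}q^{n/\ell}n^{\ell-2}/d$ contributes $O(1/(gd))=O(1/g^2)$, since $\ell\ge 3$ gives $n/\ell-n/2\le -n/6$ so the $n$-sum converges and $d\asymp g$ by~\eqref{relation-dg}; the piece $q^{(\varepsilon-1/2)d+n(1+\varepsilon)}$ scales to $q^{(\varepsilon-1/2)d+n(1/2+\varepsilon)}$, and since $n\le 2\alpha g$ with $2\alpha<\tfrac{2}{\ell-1}$ while $d=\tfrac{2g}{\ell-1}+2$, choosing $\varepsilon$ small relative to the gap $\tfrac1{\ell-1}-\alpha$ forces the exponent below $-cg$ for some $c>0$. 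Both are $O(g^{-2+\varepsilon})$.

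For the main term I would swap the order of summation, placing the sum over places $v$ outside and writing $m=k\deg v$, exactly as in~\eqref{first-step}, obtaining
\[
    -\frac{\ell-1}{g}\sum_{\deg v\le 2\alpha g/\ell}\frac{\deg v}{1+(\ell-1)q^{-\deg v}}\sum_{1\le k\le 2\alpha g/(\ell\deg v)}\hat{f}\!\left(\frac{\ell k\deg v}{2g}\right)q^{-\ell k\deg v/2}.
\]
With $\phi(g)=g^{-1+\varepsilon}$, split the inner sum at $k\deg v=g\phi(g)$: on the lower range use $\hat{f}(\ell k\deg v/2g)=\hat{f}(0)+O(\phi(g))$ and complete the $k$-sum to the geometric series $\sum_{k\ge1}q^{-\ell k\deg v/2}=(q^{\ell\deg v/2}-1)^{-1}$; on the upper range bound $|\hat{f}|\ll1$ and $q^{-\ell k\deg v/2}\le q^{-\ell g\phi(g)/2}$, reindexing by $m=k\deg v$ and using $\sum_{\deg v\divides m}\deg v\ll q^m$ to sum. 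Finally complete the sum over $v$ to all places of $\FF_q(X)$. Using the prime number theorem for $\FF_q[X]$ and $\ell\ge 3$, the series $\sum_v\frac{\deg v}{q^{\ell\deg v/2}-1}$ converges, so all the truncation errors (the Taylor term $O(\phi(g)/g)$, the geometric tails, the place-sum tail) are $O(g^{-2+\varepsilon})$, and what survives is precisely $-\hat{f}(0)\frac{\ell-1}{g}\sum_v\frac{\deg v}{(1+(\ell-1)q^{-\deg v})(q^{\ell\deg v/2}-1)}$. This proves the first formula; letting $g\to\infty$ and recalling $\int_{\U(2g)}W_f(U)\dd U=\hat{f}(0)=\int_{\mathbb{R}}f(x)\dd x$ proves the second.

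The work is entirely in the error bookkeeping rather than in any new idea: the support restriction $\operatorname{supp}\hat{f}\subset(-\tfrac1{\ell-1},\tfrac1{\ell-1})$ is exactly what makes the power-sum error $q^{(\varepsilon-1/2)d+n(1+\varepsilon)}$ negligible after the $q^{-n/2}$ scaling, and $\phi(g)=g^{-1+\varepsilon}$ simultaneously makes the Taylor error $O(\phi(g)/g)=O(g^{-2+\varepsilon})$ and the completion errors super-polynomially small. The one feature absent from the hyperelliptic case, the extra error $q^{n/\ell}n^{\ell-2}/d$ of Theorem~\ref{theorem:moment-ell}, is harmless precisely because $\ell\ge 3$, so that $n/\ell<n/2$; and, unlike the symplectic case, there is no $-1$ main term to track since $\int_{\U(2g)}\Tr U^n\dd U=0$.
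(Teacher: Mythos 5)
Your proposal is correct and follows essentially the same route as the paper: the same Fourier expansion, the same application of Theorem~\ref{theorem:moment-ell} with the $\lambda_n$ factor reducing to $n=\ell m$, the same disposal of the two error terms (using $\ell\ge 3$ for the $q^{n/\ell-n/2}n^{\ell-2}/d$ piece and the support restriction for the power-saving piece), and the same swap of summation with the split at $g\phi(g)$, $\phi(g)=g^{-1+\varepsilon}$. The error bookkeeping matches the paper's in all essentials, so there is nothing to add.
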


\begin{proof}
    Pick $\alpha \in \left(0, \frac{1}{\ell - 1}\right)$, such that the support of $\hat{f}$ is contained in $[-\alpha, \alpha]$.
    By writing out the definition of the {one}-level density and obtaining the Fourier expansion for each variable $\theta_j$, we get
    \begin{align*}
        W_f(\Theta_C) &= \sum_{j=1} ^{2g} \sum_{k \in \mathbb{Z}} f\left(2g\left(\frac{\theta_j}{2 \pi} - k \right)\right) \\
        &= \int_\mathbb{R} f(x) \dd x + \frac{1}{2g} \sum_{n \neq 0} \hat{f}\left( \frac{n}{2g} \right) \Tr \Theta_C ^n \\
        &= \hat{f}(0) + \frac{1}{g} \sum_{n=1}^{2\alpha g} \hat{f} \left( \frac{n}{2g} \right) \Tr \Theta_C ^n,
    \end{align*}
    where the last equality follows from $f$ being even and the condition on the support of $\hat{f}$.

    Averaging $W_f(\Theta_C)$ over our family of curves and applying Theorem~\ref{theorem:moment-ell} with
    \begin{equation*}
        0 < \varepsilon < \frac{1 - \alpha(\ell - 1) }{\ell+1+2\alpha(\ell - 1)},
    \end{equation*}
    we get
    \begin{equation*}
        \begin{aligned}
            \left\langle W_f(\Theta_C) \right\rangle_{\mathcal{H}_{g,\ell}}
            = &
            \hat{f}(0)
            -
            \frac{\ell-1}{g} \sum_{n = 1}^{2\alpha g/\ell} \hat{f}\bigg(\frac{\ell n}{2g}\bigg) \frac{1}{q^{\ell n/2}}  \sum_{\deg v \mid n} \frac{\deg v} {1 + (\ell-1)q^{-\deg v}}
            \\
            & + O \left( \frac{1}{g^2} \sum_{n=1}^{2 \alpha g} \hat{f}\bigg(\frac{n}{2g}\bigg)
            q^{n/\ell-n/2} n^{\ell-2} \right)
             +
            O(q^{-\varepsilon g}) \\
            = &
            \hat{f}(0)
            -
            \frac{\ell-1}{g} \sum_{n = 1}^{2\alpha g/\ell} \hat{f}\bigg(\frac{\ell n}{2g}\bigg) \frac{1}{q^{\ell n/2}}  \sum_{\deg v \mid n} \frac{\deg v} {1 + (\ell-1)q^{-\deg v}}
            + O \left( \frac{1}{g^2} \right).
        \end{aligned}
    \end{equation*}
    We now compute
    \begin{multline*}
        \sum_{n = 1}^{2\alpha g/\ell} \hat{f}\bigg(\frac{\ell n}{2g}\bigg) \frac{1}{q^{\ell n/2}}  \sum_{\deg v \mid n} \frac{\deg v}{1 + (\ell-1)q^{-\deg v}} \\
    =
    \sum_{\deg v \leq 2\alpha g / \ell} \frac{\deg v}{1 + (\ell-1)q^{-\deg v}} \sum_{k \deg v \leq 2\alpha g / \ell} \hat{f}\left(\frac{k \ell \deg v}{2g}\right) \frac{1}{q^{\ell k \deg v/2}}.
    \end{multline*}
    As in the proof of Theorem~\ref{thm:1level-density-hyperelliptic}, let $\phi(g)$ be a function which tends to $0$ as $g$ tends to $\infty$, and we split the range of the inner sum at $g \phi(g)$.
    We start by addressing the first range, $k\deg v \leq g \phi(g)$.
    From the Taylor expansion of $\hat{f}(x)$ at 0, we have
    \begin{equation*}
        \hat{f}\left(\frac{k \ell \deg v}{2g}\right) = \hat{f}(0) + O\left( \frac{k \deg v}{g} \right),
    \end{equation*}
    thus
    \begin{eqnarray*}
        &&\sum_{\deg v \leq 2\alpha g / \ell} \frac{\deg v}{1 + (\ell-1)q^{-\deg v}} \sum_{k\deg v \leq g \phi(g)} \hat{f}\left(\frac{k \ell \deg v}{2g}\right) \frac{1}{q^{\ell k \deg v/2}}\\
    &&=
        \left(\hat{f}(0) + O(\phi(g))\right) \sum_{\deg v \leq 2 \alpha g/\ell} \frac{\deg v}
        {1 + (\ell-1)q^{-\deg v}} \left(\frac{1}{q^{\ell \deg v/2} - 1} + O\left(q^{-\ell g \phi(g)/2}\right)\right) \\
     && =  \hat{f}(0) \sum_{\deg v \leq 2\alpha g/\ell}
     \frac{\deg v}{(1 + (\ell-1)q^{-\deg v})(q^{\ell\deg v/2} - 1)} + O\left(\phi(g) + q^{-g\phi(g)}\right) \\
     && = \hat{f}(0) \sum_{v} \frac{\deg v}{(1 + (\ell-1)q^{-\deg v})(q^{\ell\deg v/2} - 1)} + O\left(\phi(g) + q^{-g\phi(g)} + q^{-\frac{(2 + \ell)\alpha g}{\ell}}\right).
    \end{eqnarray*}
    For the remaining range,
    \begin{eqnarray*}
        &&\sum_{\deg v \leq 2\alpha g/\ell} \frac{\deg v}{1 + (\ell-1)q^{-\deg v}}\sum_{g\phi(g) \leq k \deg v \leq 2\alpha g / \ell} \hat{f}\left(\frac{k \ell \deg v}{2 g}\right) \frac{1}{q^{\ell k \deg v / 2}}\\
        && \ll \sum_{\deg v \leq 2 \alpha g / \ell} \frac{\deg v}{1 + (\ell-1)q^{-\deg v}} \sum_{g\phi(g)\leq k \deg v \leq 2 \alpha g / \ell} q^{-k \deg v} \\
        && \ll \alpha g q^{-g\phi(g)}.
    \end{eqnarray*}
    Using $\phi(g) = g^{-1 + \epsilon}$, this completes the proof of the first statement of the theorem.
    As
    \begin{equation*}
    \lim_{g \to \infty} \int_{\U(2g)}W_f(U) \dd U = \int_\mathbb{R} f(x) \dd x = \hat{f}(0),
    \end{equation*}
    we get the second part of the theorem by taking the limit $g \to \infty$.
\end{proof}

\section{Cubic non-Galois Covers}
\label{sec:cubic_nongalois}

In this section, we consider the family of cubic non-Galois curves.
As a first step we need to count the number of cubic non-Galois extensions of genus $g$ of $\F_q(X)$ with prescribed splitting at given places $v$ with an explicit error term (in the genus $g$ and in the place $v$).
The following result was recently obtained by Zhao~\cite{Zhao}.
The count was previously established by Datskovsky and Wright~\cite{datskovsky1988density}, but without an error term which is needed for the present application.
As the final version of the preprint~\cite{Zhao} is not available, we write the explicit constants appearing in the error term as general constants, $\delta$ for the power saving in the count, and $B$ for the dependence on the place $v$.
This allows to get a general result that could be applied to different versions of Theorem~\ref{thm:zhao}. The same convention was adopted by Yang~\cite{yang2009distribution} who considered the one level-density for cubic non-Galois extensions of $\QQ$, and this also allows us to compare our results with his.

\begin{theorem}
    \label{thm:zhao}{\cite{Zhao}}
    Let $E_3(g)$ be the set of cubic non-Galois extensions of $\FF_q(X)$ with discriminant of degree $2g + 4$.
    For any finite set of primes $\mathcal{S}$, and any set $\Omega$ of splitting conditions for the primes contained in $\mathcal{S}$, define $E_3(g,\mathcal{S}, \Omega)$ to be the subset of $E_3(g)$ consisting of the cubic extensions satisfying those splitting conditions. Then, as $g \rightarrow \infty,$
    \begin{equation*}
        \frac{\# E_3(g,\mathcal{S}, \Omega)}{\# E_3(g)} = \prod_{v \in \mathcal{S}} c_v + O\left(q^{-\delta g} \prod_{v \in \mathcal{S}} q^{B \deg v} \right),
    \end{equation*}
    where $\delta, B > 0$ are fixed constants, and
    \begin{equation*}
        c_v = \frac{q^{2 \deg v}}{1 + q^{\deg v} + q^{2 \deg v}}
        \begin{cases}
            1/6 & v\text{ totally split},\\
            1/2 & v\text{ partially split},\\
            1/3 & v\text{ inert},\\
            q^{- \deg v} & v\text{ partially ramified},\\
            q^{- 2 \deg v} & v\text{ totally ramified}.\\
        \end{cases}
    \end{equation*}
\end{theorem}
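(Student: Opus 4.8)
The plan is to deduce this from the parametrization of cubic $\FF_q[X]$-algebras by integral binary cubic forms, i.e.\ the function-field incarnation of the Delone--Faddeev correspondence used by Datskovsky and Wright~\cite{datskovsky1988density}: isomorphism classes of cubic $\FF_q[X]$-algebras are in bijection with $\mathrm{GL}_2(\FF_q[X])$-orbits of binary cubic forms $F \in \FF_q[X][x,y]$, the discriminant of the algebra being $\operatorname{disc}(F)$ up to units. Under this dictionary a cubic non-Galois extension of $\FF_q(X)$ corresponds to a \emph{maximal} cubic order whose form is irreducible with non-square discriminant, so $E_3(g)$ is the set of $\mathrm{GL}_2(\FF_q[X])$-classes of such forms with $\deg\operatorname{disc}(F) = 2g+4$, cut out by (i) a congruence condition at every finite place encoding maximality, (ii) the global conditions ``$F$ irreducible'' and ``$\operatorname{disc}(F)$ not a square'', and (iii) the degree condition. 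Via the same correspondence, prescribing a splitting type at a place $v$ is an additional congruence condition on $F$ modulo $v^2$.

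First I would form the Shintani--Igusa zeta function $Z(s) = \sum_{[F]} |\operatorname{disc}(F)|^{-s}$, the sum running over the $\mathrm{GL}_2(\FF_q[X])$-classes described above. The key structural input, which replaces the hard analytic continuation needed over $\QQ$, is that over a function field this zeta function is a \emph{rational} function of $t = q^{-s}$; consequently the coefficient extraction that produces $\#E_3(g)$ is exact, giving a main term of size $\asymp q^{2g}$ from the rightmost pole together with an honest power-saving remainder $O(q^{(2-\delta)g})$ from the remaining poles, where $\delta > 0$ measures the spectral gap. Leaving $\delta$ unspecified, as in the statement, avoids having to pin down the location of the secondary poles. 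The maximality sieve is folded in at this stage: maximality at a finite place $w$ is a congruence condition modulo $w^2$ whose local density is a fixed Euler factor, and the contribution of the tail of this sieve against the power-saving error is routine to bound.

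Next I would incorporate the conditions at $\mathcal{S}$. Replacing, for each $v \in \mathcal{S}$, the local factor of $Z(s)$ at $v$ by the partial sum over the residue classes modulo $v^2$ allowed by the prescribed splitting type reduces the matter to counting binary cubic forms over the finite ring $\FF_q[X]/v^2$ of each factorization type, weighted by automorphisms; this elementary count reproduces the constants $c_v$ of the statement, with the prefactor $q^{2\deg v}/(1 + q^{\deg v} + q^{2\deg v})$ as the local mass at $v$ and the rational factors $1/6, 1/2, 1/3, q^{-\deg v}, q^{-2\deg v}$ as the weighted proportions of the totally split, partially split, inert, partially ramified and totally ramified types. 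Multiplicativity over the finitely many $v \in \mathcal{S}$ produces the product $\prod_{v \in \mathcal{S}} c_v$, and dividing by $\#E_3(g)$ gives the stated ratio. The main obstacle is \emph{uniformity}: the congruence-restricted zeta function is again rational with poles in the same place, but its numerator now depends on $v$, and one must bound both the degree and the coefficient sizes of that numerator in terms of $\deg v$ in order to produce the error factor $\prod_{v \in \mathcal{S}} q^{B\deg v}$ with $B$ fixed and independent of the chosen splitting types --- rather than through a crude union bound over the $\asymp q^{8\deg v}$ residue classes modulo $v^2$, which would inflate the error by far too large a power of $q^{\deg v}$. Making this uniform bound explicit is the technical heart of the argument, and is exactly the refinement over~\cite{datskovsky1988density} supplied by~\cite{Zhao}.
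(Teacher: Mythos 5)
The paper does not prove this statement at all: Theorem~\ref{thm:zhao} is quoted verbatim from Zhao's preprint \cite{Zhao}, and the authors explicitly say they leave the constants $\delta$ and $B$ unspecified precisely because the final version of that preprint was not available to them. So there is no in-paper proof to compare your argument against; what you have written is an attempted reconstruction of the external result.

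Judged on its own terms, your outline correctly identifies the standard route (the Delone--Faddeev/Datskovsky--Wright parametrization by $\mathrm{GL}_2(\FF_q[X])$-orbits of binary cubic forms, local masses summing to $1 + q^{-\deg v} + q^{-2\deg v}$, which is why the prefactor $q^{2\deg v}/(1+q^{\deg v}+q^{2\deg v})$ appears, and the weights $1/6,\,1/2,\,1/3$ as $1/|\mathrm{Aut}|$ of the three \'etale types), but it is a plan rather than a proof. Every step that actually carries the content of the theorem is deferred: you assert without argument that the congruence-restricted Shintani zeta function is rational in $q^{-s}$ with the same rightmost pole and a spectral gap giving the $q^{-\delta g}$ saving; you assert that the maximality sieve (an infinite intersection of conditions modulo $w^2$ over all places $w$) can be truncated compatibly with that power saving; and you explicitly flag, but do not supply, the uniform bound on the numerator of the $v$-restricted zeta function that produces the factor $q^{B\deg v}$ --- which you yourself call the technical heart of the matter. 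There is also a smaller imprecision worth noting: the splitting type at an unramified $v$ is read off from $F$ modulo $v$, while the ramified types and the maximality condition require working modulo $v^2$; conflating these affects the automorphism-weighted local count you invoke to recover $c_v$. As it stands the proposal is a credible roadmap to Zhao's theorem, not a proof of it, and since this is an input theorem imported into the present paper rather than one of its results, the appropriate resolution is to cite \cite{Zhao} (or \cite{datskovsky1988density} for the density without the error term) rather than to attempt a self-contained derivation.
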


We also need the explicit formulas for the curves $C$ associated to the cubic non-Galois extensions in $E_3(g)$.
This is proven following exactly the same lines as the proofs of the explicit formulas for the families of elliptic curves and cyclic covers of order $\ell$ in lemmas \ref{lemma:hyperelliptic} and  \ref{lemma:traces-ell}.
The result can also be found in a paper of Thorne and Xiong \cite[Proposition 3]{thorne-xiong} who computed other statistics for the same family.
\begin{proposition}
    Let $C$ be a given curve with function field $\FF_q(C) \in E_3(g)$, and $\Tr \Theta_C ^n$ be the $n$-th power of the trace of $C$.
    Then
    \begin{equation}
        \label{eqn:thorne-xiong}
        \begin{split}
            -q^{\frac{n}{2}} \Tr \Theta_C ^n =&
            \sum_{\substack{\deg v \divides  n\\v \text{ \rm{totally split in} } \\ \FF_q(C) }} 2 \deg v
            + \sum_{\substack{\deg v \divides  \frac{n}{2}\\v \text{ \rm{partially split in} } \\ \FF_q(C) }} 2 \deg v
            \\
            &
            +
            \sum_{\substack{\deg v \divides  n\\ v \text{ \rm{partially ramified in} } \\ \FF_q(C) }}  \deg v
            +
            \sum_{\substack{\deg v \divides  \frac{n}{3}\\ v \text{ \rm{inert in} } \\ \FF_q(C) } }  3 \deg v
            -
            \sum_{\substack{\deg v \divides  n\\ v \text{ \rm{inert in} } \\ \FF_q(C) }} \deg v,
        \end{split}
    \end{equation}
    where the sums are over all places $v$ of $\FF_q(X)$ (including the place at infinity) with the prescribed behavior.
\end{proposition}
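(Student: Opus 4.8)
The plan is to follow the pattern of Lemmas~\ref{lemma:hyperelliptic} and~\ref{lemma:traces-ell}. Writing $u = q^{-s}$ and recalling that
\[
\prod_{j=1}^{2g}\bigl(1 - q^{1/2} u\, e^{i\theta_j}\bigr) = P_C(u) = \frac{\zeta_{\FF_q(C)}(s)}{\zeta_{\FF_q(X)}(s)},
\]
I would first rewrite the right-hand side as an Euler product over the places $v$ of $\FF_q(X)$: collecting the places $w$ of $\FF_q(C)$ lying over each $v$, one gets $P_C(u) = \prod_v L_v(u)$ with $L_v(u) = \frac{1 - u^{\deg v}}{\prod_{w \mid v}(1 - u^{\deg w})}$. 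Then I would apply $u\frac{d}{du}\log(\cdot)$ to both sides and compare the coefficients of $u^n$; the left side contributes $-\sum_{n \ge 1} q^{n/2} \Tr\Theta_C^n\, u^n$, exactly as in the proof of Lemma~\ref{lemma:hyperelliptic}.

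The substantive step is computing $L_v(u)$ for each of the five decomposition types of a place $v$ of degree $f := \deg v$ in a cubic non-Galois extension, using the classical description of prime splitting in such an extension. If $v$ is totally split there are three places of degree $f$ above it, so $L_v(u) = (1 - u^f)^{-2}$; if $v$ is partially split there are places of degrees $f$ and $2f$, so $L_v(u) = (1 - u^{2f})^{-1}$; if $v$ is inert there is one place of degree $3f$, so $L_v(u) = (1 - u^f)/(1 - u^{3f}) = (1 + u^f + u^{2f})^{-1}$; if $v$ is partially ramified there is a ramified place and an unramified place, both of residue degree $f$, so $L_v(u) = (1 - u^f)^{-1}$; and if $v$ is totally ramified there is a single place of residue degree $f$, so $L_v(u) = 1$. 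Applying $u\frac{d}{du}\log$ turns these into the geometric series $2f\sum_{k \ge 1}u^{kf}$, $2f\sum_{k \ge 1}u^{2kf}$, $-f\sum_{k \ge 1}u^{kf} + 3f\sum_{k \ge 1}u^{3kf}$, $f\sum_{k \ge 1}u^{kf}$, and $0$, respectively.

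Extracting the coefficient of $u^n$ from the identity $\sum_v u\frac{d}{du}\log L_v(u) = -\sum_{n \ge 1} q^{n/2}\Tr\Theta_C^n\, u^n$ then gives precisely~\eqref{eqn:thorne-xiong}: the period $f$ of the geometric series produces the condition $\deg v \mid n$, the period $2f$ produces $\deg v \mid n/2$, and the period $3f$ produces $\deg v \mid n/3$, while the totally ramified places contribute nothing, which is why no such term appears in~\eqref{eqn:thorne-xiong}. As in the earlier lemmas, the sums range over all places of $\FF_q(X)$, including the place at infinity, since the Euler product does.

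I do not expect any genuine obstacle here. The only point requiring care is the elementary ramification bookkeeping in the second step---in particular, verifying that the residue degrees in the partially split and partially ramified cases are as stated, so that the local Euler factors take the claimed form---after which the argument is a formal power-series manipulation identical in structure to the hyperelliptic and cyclic $\ell$-cover computations.
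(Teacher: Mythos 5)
Your proof is correct and is exactly the argument the paper intends: the paper itself only remarks that the identity ``is proven following exactly the same lines as'' Lemmas~\ref{lemma:hyperelliptic} and~\ref{lemma:traces-ell} (citing Thorne--Xiong), and your computation of the five local Euler factors $L_v(u)$ and the subsequent coefficient extraction supplies precisely the details that routine entails. All the ramification bookkeeping checks out, including the vanishing contribution of totally ramified places.
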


For convenience, write
\begin{equation*}
    \tau_n := \begin{cases}
        1 & 3 \divides n,\\
        0 & \text{otherwise},
    \end{cases}
\end{equation*}
and we recall that $\eta_n$ is given by \eqref{etan}.

\begin{theorem}\label{thm:trace-cubicnongalois} Let $\delta, B >0$ be as in Theorem \ref{thm:zhao}.
  The average $n$-th moment of the trace over cubic non-Galois curves in $E_3(g)$ is given by
  \begin{align*}
      \left\langle - q^{\frac{n}{2}} \Tr \Theta _C ^{n} \right\rangle_{E_3(g)}
      =& \eta_n  q^{n/2} + \eta_n - \eta_n \sum_{\deg v \divides  \frac{n}{2}}   \frac{(q^{\deg{v}}+1) \deg{v} }{1 + q^{\deg{v}} + q^{2 \deg{v}}}
      \\&
      + \sum_{\deg{v} \mid n}
      \frac{ q^{\deg{v}}  \deg{v} }{1 + q^{\deg{v}} + q^{2 \deg{v}}}
    + \tau_n \sum_{\deg{v} \mid \frac{n}{3}} \frac{ q^{2\deg{v}} \deg{v} }{1 + q^{\deg{v}} + q^{2 \deg{v}}}
    \\&
    + O\left(  q^{-\delta g} q^{(B+1)n }\right),
  \end{align*}
  where the sums are over all places $v$ of $\FF_q(X)$.
\end{theorem}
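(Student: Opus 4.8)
The plan is to mirror the proofs of Theorems~\ref{thm:moment-hyperelliptic} and~\ref{theorem:moment-ell}. Starting from the explicit formula~\eqref{eqn:thorne-xiong} for a fixed curve $C$ with $\FF_q(C) \in E_3(g)$, I would average both sides over $E_3(g)$ and interchange the order of summation, so that each of the five inner sums becomes a sum over places $v$ of $\deg v$ — with the coefficient $1$, $2$ or $3$ dictated by~\eqref{eqn:thorne-xiong} — weighted by the proportion of curves in $E_3(g)$ with the prescribed splitting behavior at $v$. Theorem~\ref{thm:zhao}, applied with the singleton set $\mathcal{S} = \{v\}$, replaces that proportion by the corresponding local density $c_v$ up to an error $O(q^{-\delta g} q^{B \deg v})$, uniform in $v$.

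The second step is to collect the five resulting sums. Pairing the coefficient $2\deg v$ of the totally split sum over $\deg v \mid n$ with $c_v = \frac16\,\frac{q^{2\deg v}}{1+q^{\deg v}+q^{2\deg v}}$ produces $\frac{\deg v\, q^{2\deg v}}{3(1+q^{\deg v}+q^{2\deg v})}$, which is exactly cancelled by the inert sum over $\deg v \mid n$, whose coefficient is $-\deg v$ and whose local density is $\frac13\,\frac{q^{2\deg v}}{1+q^{\deg v}+q^{2\deg v}}$. The three surviving contributions — partially split over $\deg v \mid n/2$ (present only when $2 \mid n$, hence the factor $\eta_n$), partially ramified over $\deg v \mid n$, and inert over $\deg v \mid n/3$ (present only when $3 \mid n$, hence the factor $\tau_n$) — become, after inserting the matching $c_v$,
\[
    \eta_n \sum_{\deg v \mid \frac{n}{2}} \frac{\deg v\, q^{2\deg v}}{1+q^{\deg v}+q^{2\deg v}}
    + \sum_{\deg v \mid n} \frac{\deg v\, q^{\deg v}}{1+q^{\deg v}+q^{2\deg v}}
    + \tau_n \sum_{\deg v \mid \frac{n}{3}} \frac{\deg v\, q^{2\deg v}}{1+q^{\deg v}+q^{2\deg v}}.
\]

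To reach the stated form I would rewrite $\frac{q^{2\deg v}}{1+q^{\deg v}+q^{2\deg v}} = 1 - \frac{q^{\deg v}+1}{1+q^{\deg v}+q^{2\deg v}}$ in the first sum and evaluate $\sum_{\deg v \mid n/2} \deg v = q^{n/2}+1$, using the prime number theorem identity~\eqref{eqn:pnt} for the finite places together with the contribution $1$ of the place at infinity; this yields the $\eta_n q^{n/2} + \eta_n - \eta_n \sum_{\deg v \mid n/2} \frac{(q^{\deg v}+1)\deg v}{1+q^{\deg v}+q^{2\deg v}}$ portion of the formula. For the error term, the coefficients $1,2,3$ appearing in~\eqref{eqn:thorne-xiong} are bounded, so each place contributes $O(\deg v\, q^{-\delta g} q^{B \deg v})$; summing over $\deg v \mid n$ gives $O\bigl(q^{-\delta g} \sum_{d \mid n} d\,\pi(d)\, q^{Bd}\bigr) = O\bigl(q^{-\delta g} \sum_{d \mid n} q^{(B+1)d}\bigr) = O(q^{-\delta g} q^{(B+1)n})$, while the place at infinity contributes only $O(q^{-\delta g + B})$, which is absorbed.

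The only point requiring care is the behavior at the place at infinity, since Theorem~\ref{thm:zhao} is stated for finite primes: one must know that the proportion of curves in $E_3(g)$ with a given splitting type at the infinite place is governed by the same $c_v$ (with $\deg v = 1$). As in the hyperelliptic and cyclic $\ell$-cover arguments — where the analogous ``extra $1$'' was handled in the proof of Theorem~\ref{thm:moment-hyperelliptic} — I expect the infinite place to behave like a generic degree-one place, supplying exactly the ``$+1$'' in $\sum_{\deg v \mid n/2}\deg v$, and I would justify this directly from the description of $E_3(g)$ in terms of the degree of the discriminant. Beyond this and the routine coefficient bookkeeping, I anticipate no substantial obstacle.
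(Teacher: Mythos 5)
Your proposal is correct and follows essentially the same route as the paper: average the explicit formula \eqref{eqn:thorne-xiong}, apply Theorem~\ref{thm:zhao} place by place, observe the cancellation between the totally split and inert contributions over $\deg v \mid n$, and convert the surviving partially split sum via $\tfrac{q^{2\deg v}}{1+q^{\deg v}+q^{2\deg v}} = 1 - \tfrac{q^{\deg v}+1}{1+q^{\deg v}+q^{2\deg v}}$ together with \eqref{eqn:pnt}. Your remark about justifying the density at the infinite place is a legitimate point that the paper itself passes over silently, so flagging it is a refinement rather than a deviation.
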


\begin{proof}
    We rewrite equation~\eqref{eqn:thorne-xiong} as
    \begin{equation*}
        -q^{\frac{n}{2} } \Tr \Theta_C ^n
        =
        \sum_{\alpha} \sum_{\substack{v \in \mathcal{V}_\alpha(C) \\ \deg v \divides  \frac{n}{d_\alpha} }}\delta_\alpha \deg v,
    \end{equation*}
    where $\alpha = 1,2,3,4,5$ indexes the five terms in equation~\eqref{eqn:thorne-xiong}; we also use the index $\alpha$ to refer to the type of ramification associated to the curve $C$ in each term, more precisely as $v \in \mathcal{V}_\alpha(C)$.
    Note that $\delta_1 = \delta_2 = 2$, $\delta_3 = 1$, $\delta_4 = 3$, $\delta_5 = -1$, and $d_1=d_3=d_5=1$, $d_2=2$ and $d_4=3$.

    We now average over our family of curves with genus $g$ to obtain
    \begin{align*}
        \left\langle - q^{\frac{n}{2}} \Tr \Theta_C^n \right\rangle_{E_3(g)}
        &=
        \frac{1}{\# E_3(g)} \sum_{\F_q(C) \in E_3(g)}
        \sum_{\alpha} \sum_{\substack{v \in \mathcal{V}_\alpha(C) \\ \deg v \divides  \frac{n}{d_\alpha} }}\delta_\alpha \deg v \\
        &=
        \sum_{\alpha} 
        \sum_{\deg v \divides  \frac{n}{d_\alpha} }
        \delta_\alpha \deg v
        \frac{\# E_3(g, v,\alpha)}{\# E_3(g)} \\
        &=
        \sum_{\alpha} 
        \sum_{\deg v \divides  \frac{n}{d_\alpha} } \left(
            \delta_\alpha \deg v
            \; c_{v, \alpha}
            +
        O\left(\deg v \, q^{-\delta g}   q^{B \deg v} \right) \right),
    \end{align*}
    where the second equality is obtained by swapping the order of the sums and the third equality follows from Theorem~$\ref{thm:zhao}$.
    Note that the sum of the error terms is
    \begin{equation*}
        O\left(  q^{-\delta g} q^{(B+1)n }\right).
    \end{equation*}
    Writing
    \begin{equation*}
        A(v) = \frac{q^{2 \deg{v}} \deg{v}}{1 + q^{\deg{v}} + q^{2 \deg{v}}},
    \end{equation*}
    we have
    \begin{align*}
        \sum_{\alpha} \sum_{\deg v \divides  \frac{n}{d_{\alpha}} } \delta_{\alpha} \deg v \; c_{v,\alpha}
        &  =
        \eta_n \sum_{\deg v \divides  \frac{n}{2}}   A(v)
 +  \sum_{\deg v \divides  n} A(v) q^{-\deg{v}}
        + \tau_n \sum_{\deg v \divides  \frac{n}{3}} A(v),
    \end{align*}
     and
    \begin{align*}
    \eta_n \sum_{\deg v \divides  \frac{n}{2}}   A(v)
    & =
    \eta_{n} \sum_{\deg v \divides  \frac{n}{2}}  \deg{v} - \eta_n \sum_{\deg v \divides  \frac{n}{2}}   \frac{\left( q^{\deg{v}} + 1 \right) \deg{v} }{1 + q^{\deg{v}} + q^{2 \deg{v}}}\\
    & = \eta_n \left( q^{n/2} + 1 \right) - \eta_n \sum_{\deg v \divides  \frac{n}{2}}   \frac{\left( q^{\deg{v}} + 1 \right) \deg{v} }{1 + q^{\deg{v}} + q^{2 \deg{v}}}
    \end{align*}
    using \eqref{eqn:pnt}.
\end{proof}

\begin{corollary}
    \label{cor:cubic-ngalois}
    For any $\varepsilon > 0$, and as $g \rightarrow \infty$,
    \begin{equation*}
        \left\langle \Tr \Theta_C ^n \right\rangle_{E_3(g)}  = - \eta_n
         + O \left(q^{-\frac{n}{6}} + q^{-\delta g+\left(B+\frac{1}{2}\right) n} \right).
    \end{equation*}
    Further,
    for $6 \log_q{g} < n < \frac{\delta g}{B+1/2}$, and as $g \rightarrow \infty$,
    \begin{equation*}
        \left\langle \Tr \Theta_C ^n \right\rangle_{E_3(g)}
        =  \int_{\USp(2g)} \Tr U^n \dd U + O\left( \frac{1}{g} \right).
    \end{equation*}
\end{corollary}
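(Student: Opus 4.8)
The plan is to deduce Corollary~\ref{cor:cubic-ngalois} from Theorem~\ref{thm:trace-cubicnongalois} by the same mechanism used for Corollaries~\ref{cor:moment-hyperelliptic} and~\ref{cor:moment-ellcovers}: divide the formula for $\langle -q^{n/2}\Tr\Theta_C^n\rangle_{E_3(g)}$ by $-q^{n/2}$, read off $-\eta_n$ as the leading term, and estimate everything else with the prime number theorem for $\FF_q[X]$. After dividing, one is left with the term $-\eta_n q^{-n/2}$; three sums, each of the form $q^{-n/2}\sum_{\deg v\mid m}\frac{\beta(v)\,\deg v}{1+q^{\deg v}+q^{2\deg v}}$, where $(\beta(v),m)$ runs over $(q^{\deg v},n)$, $(q^{\deg v}+1,\,n/2)$ weighted by $\eta_n$, and $(q^{2\deg v},\,n/3)$ weighted by $\tau_n$; and the error $O(q^{-\delta g}q^{(B+1)n})/q^{n/2}=O(q^{-\delta g+(B+1/2)n})$.

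For the three sums I would use the elementary bounds $\frac{q^{2d}}{1+q^d+q^{2d}}\le 1$ and $\frac{q^d}{1+q^d+q^{2d}},\ \frac{q^d+1}{1+q^d+q^{2d}}\ll q^{-d}$, together with $\sum_{\deg v\mid m}\deg v = q^m+1$ (the identity~\eqref{eqn:pnt} plus the contribution of the infinite place) and $\sum_{\deg v\mid m}\deg v\,q^{-\deg v}\ll n^{\varepsilon}$ (each divisor $e$ of $m$ contributing $O(1)$). The $\tau_n$-sum over $\deg v\mid n/3$ is then at most $q^{n/3}+1$, so it contributes $O(q^{-n/6})$ after division by $q^{n/2}$, and this is the dominant lower-order term; the two remaining sums and the term $\eta_n q^{-n/2}$ each contribute $O(n^{\varepsilon}q^{-n/2})=O(q^{-n/6})$. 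Collecting these estimates gives the first assertion
\[
\langle\Tr\Theta_C^n\rangle_{E_3(g)}=-\eta_n+O\!\left(q^{-n/6}+q^{-\delta g+(B+1/2)n}\right),
\]
uniformly in the parity of $n$ (for odd $n$ both sides vanish up to the stated error).

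For the second assertion I would invoke~\eqref{U-moments}, which gives $\int_{\USp(2g)}\Tr U^n\dd U=-\eta_n$ for $1<n<2g$; for $g$ large and $n$ in the stated range this window is nonempty and $n$ lies in $(1,2g)$, so the two main terms coincide. It then remains to bound both error terms by $O(1/g)$: the hypothesis $n>6\log_q g$ gives $q^{-n/6}<q^{-\log_q g}=1/g$, while keeping $n$ below the upper endpoint by a fixed multiple of $\log_q g$ — equivalently, as with the parameter $\varepsilon'$ in Corollary~\ref{cor:moment-hyperelliptic}, restricting to $n<(1-\varepsilon')\delta g/(B+1/2)$ — makes $q^{-\delta g+(B+1/2)n}\ll q^{-\varepsilon'\delta g}=o(1/g)$. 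I expect the only real care to be in this last step, balancing the two competing error terms against the admissible range of $n$; everything else is a mechanical consequence of Theorem~\ref{thm:trace-cubicnongalois} and the function-field prime number theorem.
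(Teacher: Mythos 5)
Your proposal is correct and is exactly the argument the paper intends: its proof of this corollary is literally ``\emph{mutatis mutandis}'' Corollaries~\ref{cor:moment-hyperelliptic} and~\ref{cor:moment-ellcovers}, i.e.\ divide Theorem~\ref{thm:trace-cubicnongalois} by $-q^{n/2}$, identify $-\eta_n$ as the main term, bound the $\tau_n$-sum by $q^{n/3}$ and the other sums by $O(n^{\varepsilon})$ via the prime number theorem, and compare with \eqref{U-moments}. Your caveat about the upper endpoint is well taken --- as stated, $n<\delta g/(B+1/2)$ only makes the second error term $O(1)$, and one needs $n$ below $(1-\varepsilon')\delta g/(B+1/2)$ (or below the endpoint by a multiple of $\log_q g$) to get $O(1/g)$, exactly as the paper does include the factor $(1-\varepsilon')$ in the analogous Corollaries~\ref{cor:moment-hyperelliptic} and~\ref{cor:moment-ellcovers}.
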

\begin{proof}
    \emph{Mutatis mutandis} Corollaries~\ref{cor:moment-hyperelliptic}~and~\ref{cor:moment-ellcovers}.
\end{proof}

\begin{theorem}
   \label{thm-oneLD}
   Let $\delta,B > 0$ be fixed constants as in Theorem~\ref{thm:zhao}.
   Let $f$ be an even test function in the Schwartz space $\mathcal{S}(\mathbb{R})$ with $ \operatorname{supp} \hat{f} \subset (- \frac{\delta}{2B + 1}, \frac{\delta}{2B + 1} )$, then for any $\varepsilon > 0$,
   \begin{align*}
      \left \langle W_f (\Theta_C) \right\rangle_{E_3(g)} =  \int_{\USp (2g)} W_f(U) \dd U - \frac{\hat{f}(0)}{g}  \kappa  + O\left( \frac{1}{g^{2 - \varepsilon}} \right),
  \end{align*}
  where
  \begin{equation} \label{def-kappa}
    \begin{split}
      \kappa = \frac{1}{q-1} &-
      \sum_{v}
      \frac{(1+q^{\deg v}) \deg{v} }{ \left( q^{\deg v} - 1 \right) \left( 1 + q^{\deg v} + q^{2 \deg v } \right)} \\
      &+ \sum_{v}
      \frac { q^{\deg v} \deg v }{\left( q^{\deg v /2} - 1 \right)\left( 1 + q^{\deg v} + q^{2 \deg v} \right) } \\
      &+ \sum_{v}
      \frac{ q^{2 \deg v} \deg{v} }{ \left( q^{3 \deg v /2 } - 1 \right)\left( 1 + q^{ \deg v} + q^{2 \deg v} \right)},
    \end{split}
  \end{equation}
where the sums are over all places of $\FF_q(X)$.
Moreover,
\begin{align*}
      \lim_{g \rightarrow \infty} \left \langle W_f (\Theta_C) \right\rangle_{E_3(g)}
      & = \lim_{g \rightarrow \infty} \int_{\USp (2g)} W_f(U) \dd U\\
      & = \int_{\mathbb{R}} f(x) \left( 1 - \frac{\sin (2 \pi x)}{2 \pi x} \right) \dd x.
  \end{align*}
\end{theorem}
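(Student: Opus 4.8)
The plan is to follow the same strategy used in the proofs of Theorem~\ref{thm:1level-density-hyperelliptic} and Theorem~\ref{thm:1level-density-ell}, now feeding in the trace formula from Theorem~\ref{thm:trace-cubicnongalois}. First I would pick $\alpha \in \left(0, \frac{\delta}{2B+1}\right)$ so that $\operatorname{supp}\hat{f} \subset [-\alpha, \alpha]$, expand the one-level density via Fourier inversion as
\begin{equation*}
    W_f(\Theta_C) = \hat{f}(0) + \frac{1}{g} \sum_{n=1}^{2\alpha g} \hat{f}\left(\frac{n}{2g}\right) \Tr \Theta_C^n,
\end{equation*}
using that $f$ is even and $\hat{f}$ has the stated support, and average over $E_3(g)$. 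Applying Theorem~\ref{thm:trace-cubicnongalois} (rescaled by $-q^{-n/2}$) and choosing $\varepsilon$ small enough relative to $\alpha$, $\delta$, $B$ that the error term $q^{-\delta g + (B+1)n}$ summed over $n \leq 2\alpha g$ is $O(q^{-\varepsilon g})$, the leading $\eta_n q^{n/2}$ term cancels against the corresponding $q^{-n/2}$ and matches $\int_{\USp(2g)} W_f(U)\dd U = \hat{f}(0) - \frac{1}{g}\sum_{n=1}^{\alpha g} \eta_n \hat{f}(n/g)$ up to the secondary terms, which become the $\kappa$ contribution divided by $g$.

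The bulk of the work is the same dyadic-type manipulation as before: each of the three remaining sums in Theorem~\ref{thm:trace-cubicnongalois} (the $\eta_n$ sum over $\deg v \mid n/2$, the plain sum over $\deg v \mid n$, and the $\tau_n$ sum over $\deg v \mid n/3$), after division by $q^{n/2}$ and summation against $\frac{1}{g}\hat{f}(n/2g)$, gets rewritten by swapping the order of summation so $\deg v$ is the outer variable and $n = k \deg v \cdot d_\alpha$ the inner one. I would split the inner sum at $k \deg v = g\phi(g)$ with $\phi(g) = g^{-1+\varepsilon}$; in the short range use the Taylor expansion $\hat{f}(x) = \hat{f}(0) + O(x)$ and sum the geometric series $\sum_k q^{-\beta k \deg v}$ (with $\beta = 1$ for the $\eta_n$ term coming from the $(q^{\deg v}+1)/q^{\deg v} \sim 1$ factor times $q^{-\deg v}$, $\beta = 1/2$ for the partially-ramified term, and $\beta = 3/2$ for the inert term — these exponents are exactly what produces the denominators $q^{\deg v}-1$, $q^{\deg v/2}-1$, $q^{3\deg v/2}-1$ in \eqref{def-kappa}), and in the long range bound trivially by $O(\alpha g \, q^{-g\phi(g)})$. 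Collecting the three contributions, together with the $\eta_n$ "extra $1$" from the place at infinity (which gives the $\frac{1}{q-1}$ term in $\kappa$, arising as $\sum_n \eta_n q^{-n}\hat{f}(n/2g)/g \sim \hat{f}(0)/(g(q^2-1))$... more precisely the constant piece of $\eta_n(q^{n/2}+1-q^{n/2})$), yields $-\frac{\hat{f}(0)}{g}\kappa + O(g^{-2+\varepsilon})$.

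The main obstacle is bookkeeping: there are three separate secondary sums with three different geometric ratios, plus the place-at-infinity correction, and one must verify that the arithmetic of the denominators $1 + q^{\deg v} + q^{2\deg v}$ coming from $c_{v,\alpha}$ in Theorem~\ref{thm:zhao} combines correctly with the $q^{-n/2}$ normalization to give exactly the four terms of \eqref{def-kappa} — in particular that the $\frac{1}{q-1}$ term, which does not come with a $(1 + q^{\deg v} + q^{2\deg v})$ denominator, is correctly isolated as the infinite-place contribution to the $\eta_n$ sum before the $A(v)$ rewriting in Theorem~\ref{thm:trace-cubicnongalois}. Once the first statement is established, the limiting statement follows immediately: letting $g \to \infty$ kills the $\kappa/g$ and error terms, and $\lim_{g\to\infty}\int_{\USp(2g)} W_f(U)\dd U = \int_\mathbb{R} f(x)\bigl(1 - \tfrac{\sin(2\pi x)}{2\pi x}\bigr)\dd x$ is the standard symplectic one-level density, valid here since $\operatorname{supp}\hat{f} \subset (-1,1)$.
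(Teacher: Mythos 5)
Your proposal follows essentially the same route as the paper's proof: Fourier-expand $W_f$, average using Theorem~\ref{thm:trace-cubicnongalois}, swap the order of summation so $\deg v$ is outermost, split the inner sum at $g\phi(g)$ with $\phi(g)=g^{-1+\varepsilon}$, Taylor-expand $\hat f$ in the short range, and sum the geometric series with ratios $q^{-\deg v}$, $q^{-\deg v/2}$, $q^{-3\deg v/2}$ to produce the three place sums in $\kappa$, with the infinite-place constant $\eta_n$ contributing the $\frac{1}{q-1}$ term. The only blemish is the intermediate expression $\sum_n \eta_n q^{-n}\hat f(n/2g)/g \sim \hat f(0)/(g(q^2-1))$, which should read $\sum_n \eta_n q^{-n/2}\hat f(n/2g)/g \sim \hat f(0)/(g(q-1))$; your stated mechanism (the constant piece of $\eta_n(q^{n/2}+1)$ weighted by $q^{-n/2}$) and final answer are nonetheless correct.
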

\begin{proof}
    Since the function $\hat{f}$ is continuous, its support is contained in $[-\alpha, \alpha]$ for some $0 < \alpha < \frac{\delta}{2B + 1}$.
    Averaging $W_f(\Theta_C)$ over our family of curves using \eqref{formula-for-OLD} and Theorem~\ref{thm:trace-cubicnongalois} we get for $0 < \varepsilon < \delta - 2 \alpha (B+1)$ that
    \begin{equation*}
        \left\langle W_f(\Theta_C) \right\rangle_{E_3(g)} = \hat{f}(0) - \frac{1}{g} \sum_{n=1} ^{\alpha g} \hat{f}\left( \frac{n}{g} \right)
        - \frac{1}{g}  \sum_{n=1}^{ 2 \alpha g} \hat{f}\left( \frac{n}{2g} \right) q^{-n/2} F(n)
        +O\left(q^{-\varepsilon g} \right)
    \end{equation*}
    where
    \begin{align*}
        F(n) & : =  \eta_n - \eta_n \sum_{\deg v \divides  \frac{n}{2}}   \frac{ {(q^{\deg v}+1)} \deg{v} }{{1 + q^{ \deg v}}  + {q^{ 2 \deg v}} }  \\
      &+ \sum_{\deg{v} \divides n}
      \frac{ {q^{\deg v}}   \deg{v} }{{1 + q^{ \deg v}}  + {q^{ 2 \deg v}} }
    + \tau_n \sum_{\deg{v} \divides \frac{n}{3}} \frac{ {q^{2 \deg v}}  \deg{v} }
    {1 + q^{ \deg v}  + q^{ 2 \deg v} }.
  \end{align*}
  Moreover, the two first terms can be rewritten as
  \begin{equation*}
      \int_{\USp (2g)} W_f(U) \dd U = \hat{f}(0) - \frac{1}{g} \sum_{1 \leq n \leq \alpha g} \hat{f} \left( \frac{n}{g} \right),
    \end{equation*}
    using~\eqref{U-moments}.
    Therefore,
    for $0 < \varepsilon < \delta - \alpha(2B+2)$ we have
    \begin{equation*}
         \left\langle W_f(\Theta_C) \right\rangle_{E_3(g)} =  \int_{\USp (2g)} W_f(U) \dd U
         - \frac{1}{g}  \sum_{n=1} ^{2 \alpha g} \hat{f}\left( \frac{n}{2g} \right) q^{-n/2} F(n)
        +O\left(q^{-\varepsilon g} \right).
    \end{equation*}

    We now compute the lower order terms for each of the sums of $F(n)$ as defined above. We have
    \begin{eqnarray*}
        && \sum_{n=1}^{2 \alpha g} \hat{f} \left( \frac{n}{2g} \right) \frac{\eta_n}{q^{n/2}}  \sum_{\deg v \divides  \frac{n}{2}} \frac{ (1 + q^{\deg v}) \deg{v}}{1 + q^{ \deg v}  + q^{ 2 \deg v}}  \\
        && \hspace{1cm} = \sum_{\deg{v} \leq  \alpha g}  \frac{ (1+q^{\deg v}) \deg{v}}{1 + q^{ \deg v}  + q^{ 2 \deg v}}
        \sum_{k \deg{v} \leq \alpha g} \hat{f} \left( \frac{k \deg{v}}{g} \right) \frac{1}{{q^{k \deg v }}};\\
        &&\sum_{n=1}^{2 \alpha g} \hat{f} \left( \frac{n}{2g} \right) \frac{1}{q^{n/2}} \sum_{\deg v \divides  {n}}  \frac{q^{{\deg{v}}} \deg{v}}
        {1 + q^{ \deg v}  + q^{ 2 \deg v}}  =\\
        &&\sum_{\deg{v} \leq 2 \alpha g}   \frac{{q^{\deg v}}  \deg{v}} {1 + q^{ \deg v}  + q^{ 2 \deg v}}
        \sum_{k \deg{v} \leq 2 \alpha g} \hat{f} \left( \frac{k \deg{v}}{2g} \right) \frac{1}{{q^{ k      \deg v /2}}};\\
    &&
      \sum_{n=1}^{2 \alpha g} \hat{f} \left( \frac{n}{2g} \right) \frac{\tau_n}{q^{n/2}} \sum_{\deg v \divides  \frac{n}{3}} \frac{ q^{2\deg{v}} \deg{v}}
      {1 + q^{ \deg v}  + q^{ 2 \deg v}}  = \\
      && \sum_{\deg{v} \leq  2 \alpha g}  \frac{ {q^{2 \deg v}}  \deg{v}} {1 + q^{ \deg v}  + q^{ 2 \deg v} }
        \sum_{3 k \deg{v} \leq 2 \alpha g} \hat{f} \left( \frac{3k \deg{v}}{2g} \right) \frac{1}{ q^{ 3 k \deg v / 2} }.
  \end{eqnarray*}
    As before, we break the range of the inside sum at $g \phi(g)$ where $\phi(g)$ is a function which tends to $0$ as $g$ tends to infinity, and we use the  Taylor expansion for $\hat{f}(x)$ in the first range to get that the first, second and third sum above are respectively
    \begin{gather*}
        \hat{f}(0) \sum_{v}  \frac{ (1+q^{\deg v}) \deg{v} }{ \left( q^{\deg v} - 1 \right) \left( 1 + q^{\deg v} + {q^{2 \deg v}} \right)}
        +
        O\left( \phi(g) + q^{-g \phi(g)} + q^{-2 \alpha g } \right)
        \\
        \hat{f}(0) \sum_{v} \frac{ q^{\deg v}  \deg{v} }{ \left( q^{\deg v / 2} - 1 \right) \left( 1 + q^{\deg v} + q^{2 \deg v}  \right) }
        + O\left(\phi(g) + q^{-g \phi(g)} + q^{-3 \alpha g}\right) \\
        \hat{f}(0) \sum_{v}  \frac{ q^{2 \deg v} \deg{v} } { \left( q^{3 \deg v/2} - 1 \right) \left( 1 + q^{\deg v} +  q^{2 \deg v} \right)}
        + O\left(\phi(g) + q^{-g \phi(g)} + q^{-3 \alpha g} \right),
    \end{gather*}
     and similarly
    $$\sum_{n=1}^{g \phi(g)} \hat{f} \left( \frac{n}{2g} \right) \frac{\eta_n}{q^{n/2}}  = \frac{\hat{f}(0)}{q-1} + O\left(\phi(g) + q^{-g \phi(g)}\right).
    $$
    For the remaining range from $g \phi(g)$ to $2 \alpha g$, working as in the proofs of Theorems~\ref{thm:1level-density-hyperelliptic} and~\ref{thm:1level-density-ell}, we have that each of the four sums is $O\left(\alpha g q^{-g \phi(g)}\right)$.
     By choosing $\phi(g) = g^{-1+\varepsilon}$, we get that
 \begin{equation*}
 - \frac{1}{g} \sum_{n=1}^{2 \alpha g} \hat{f} \left( \frac{n}{2g} \right) {q^{-n/2}} F(n)
= - \frac{1}{g} \hat{f}(0) \kappa + O \left( \frac{1}{g^{2-\varepsilon}} \right),
\end{equation*}
which proves the first statement.
  Taking the limit $g \rightarrow \infty$ we get the second part of the theorem.
\end{proof}

We now compare the results of the above theorem with the results obtained by  Yang for the one-level density of cubic non-Galois extensions over number fields~\cite{yang2009distribution}.
Yang's results hold for $\operatorname{supp} \hat{f} \subset (-c, c)$, where
\begin{equation*}
    c = \frac{2(1-A)}{2B+1},
\end{equation*}
and the parameters $0<A<1$ and $B>0$ are such that
\begin{equation} \label{ET-yang}
N_{p}(X,T) = c_{P,T} X  + O \left( X^A p^B \right),
\end{equation}
where $N_p(X,T)$ is the number of cubic non-Galois extensions of $\QQ$ with discriminant between $0$ and $X$ and such that the splitting behavior at the prime $p$ is of type $T$, see \cite[Proposition 2.2.4]{yang2009distribution}.
In order to compare it with Theorem~\ref{thm-oneLD}, we need to find the correspondence between the $A$ of \eqref{ET-yang} and the $\delta$ of Theorem~\ref{thm:zhao} (the $B$'s are the same).
We rewrite \eqref{ET-yang} by dividing by the main term given by $N(X)$, the number of non-Galois cubic fields of discriminant up to $X$ which is $C X$ for some absolute constant $C$, and we rewrite \eqref{ET-yang} as
\begin{eqnarray} \label{ET-yang-2}
\frac{N_{p}(X,T)}{N(X)} = c'_{P,T}  + O \left( X^{A-1} p^B \right).
\end{eqnarray}
In the situation of Theorem~\ref{thm:zhao},
since
$\# E_3(g) \sim q^{2g+4}$, we have for one place $v$ that
\begin{eqnarray} \label{ET-zhao-2}
\frac{\# E_3(g, \{ v \}, \Omega)}{\# E_3(g)} = c_{v}  + O \left( \left( q^{2g} \right)^{-\delta/2} q^{B \deg v} \right).
\end{eqnarray}
Then, to compare \eqref{ET-yang-2} and \eqref{ET-zhao-2}, we set
$$A-1 = -\frac{\delta}{2} \iff \delta=2-2A.$$
Then, we have that the support of the Fourier transform in Theorem~\ref{thm-oneLD} is $(-c, c)$ where
$$c =\frac{\delta}{2B+1} = \frac{2-2A}{2B+1},$$
which agrees with the support of the Fourier transform in~\cite[Proposition 2.2.4]{yang2009distribution}.

\section{Explicit error terms and the Lindel\"{o}f bound}
\label{sec:lindelof}

In this section we explain our approach to make the dependence on the place $v_0$ explicit in Theorems~\ref{thm:density-hyperelliptic} and~\ref{thm:general-ell}.
We start by reviewing how the counting of function field extensions ramifying (or splitting or inert) at a given finite place $v_0$ is obtained in~\cite{BDFKLOW}, and how the dependence on the place $v_0$ reduces to obtaining the Lindel\"{o}f bound for the Dirichlet L-functions $L(1/2 + it, \chi)$, where $\chi$ is a Dirichlet character of modulus $v_0$ and order $\ell$.
We conclude this section by proving this bound.

The counting of function fields extensions in~\cite{BDFKLOW} is done by writing explicitly the generating series for the extensions, and applying the Tauberian Theorem~\cite[Theorem 2.5]{BDFKLOW} to the generating series.
As usual, this involves moving the line of integration and applying Cauchy's residue theorem to the relevant region.
The main term will be given by the sum of the residues at the poles in the region, and this is where the main terms of Theorems~\ref{thm:density-hyperelliptic} and~\ref{thm:general-ell} come from.
The error term comes from evaluating the integral at the limit of the region of analytic continuation of the generating series, which involves bounding the generating series on some half line.

We start by looking at the counting for cyclic extensions of degree $\ell$ with conductor of degree $d$ which ramify (or not ramify) at a given place $v_0$.
In this case, the generating series $\mathcal{F}_R(s)$ and $\mathcal{F}_U(s)$, respectively, converge absolutely for $\re(s) > \frac{1}{\ell-1}$ with a pole of order $\ell-1$ at $s = \frac{1}{\ell-1}$, which gives the main term.
Each generating series has analytic continuation to $\re(s) = \frac{1}{2(\ell-1)} + \varepsilon$ for any $\varepsilon > 0$, and the error term is then bounded by
\begin{equation*}
    O \left( q^{(\frac{1}{2} + \varepsilon)d} M \right),
\end{equation*}
where $M$ is the maximum value taken by $\mathcal{F}_R(s)$ (or  $\mathcal{F}_U(s)$) on the line $\re(s) = \frac{1}{2(\ell-1)} + \varepsilon$.
It is important to note that the generating series are absolutely bounded on this line, i.e., the bound does not depend on $v_0$, but might depend on $q$ and $\ell$, and the results of Theorems~\ref{thm:density-hyperelliptic} and~\ref{thm:general-ell} follow.
There is a difference between the case $\ell = 2$ and $\ell \geq 3$, as the generating series is written as the sum of two functions, one with a pole of order $\ell-1$, and one with poles of order $1$. If $\ell \geq 3$, the main term comes from the pole of order $\ell-1$ only. If $\ell = 2$, the two poles are simple and there is some cancellation between contributions of the residues at the two poles.

It remains to deal with the error terms for the two unramified cases, namely counting extensions split at $v_0$ and inert at $v_0$.
In this case the argument works the same way for all $\ell \geq 2$. Let $\xi_\ell$ be a primitive $\ell$-th root of unity, and let
\begin{equation*}
    \chi_{v, \ell}(v_0) = \left( \frac{v_0}{v} \right)_\ell,
\end{equation*}
be the $\ell$-th power residue symbol,
which is a Dirichlet character of order $\ell$ and modulus $v_0$ over $\F_q(X)$.

The generating series for $E(\mathbb{Z}/\ell \mathbb{Z}, \ell, v_0, \mathrm{split})$ is
\begin{equation}
    \label{SversusU}
    \mathcal{F}_S(s)= \frac{1}{\ell} \mathcal{F}_U(s) +\frac{1}{\ell^2} \sum_{j=0}^{\ell-1} \sum_{k=1}^{\ell-1} \left(\sum_{r=0}^{\ell - 1} \xi_\ell^{-rk \deg{v_0}}\right) \mathcal{M}_{j,k} (s,v_0,\mathrm{split}),
\end{equation}
where $\mathcal{M}_{j,k} (s,v_0,\mathrm{split})$ is given by
\begin{equation*}
    \prod_{v \neq v_0} \left( 1 + \left( \xi_\ell^{j \deg v} \chi_{v, \ell}(v_0)^k + \dots + \xi_\ell^{(\ell-1)j \deg{v}} \chi_{v, \ell}(v_0)^{(\ell-1) k} \right) Nv^{-(\ell-1)s} \right).
\end{equation*}
As before, the count is then obtained by applying the Tauberian theorem to the generating series $\mathcal{F}_S(s)$.
This series converges absolutely for $\re(s) > \frac{1}{\ell-1}$ with a pole of order $\ell-1$ at $s = \frac{1}{\ell-1}$, which gives the main term.
The function $\mathcal{F}_S(s)$ has analytic continuation to $\re(s) = \frac{1}{2(\ell-1)} + \varepsilon$ for any $\varepsilon > 0$.
The error term is bounded by
\begin{equation*}
    O \left( q^{\frac{1}{2} + \varepsilon} M \right),
\end{equation*}
where $M$ is the maximal value of $\mathcal{F}_S(s)$ for $\re(s) = \frac{1}{2(\ell-1)} + \varepsilon.$
As we mentioned above, $\mathcal{F}_U(s)$ is absolutely bounded on this line, thus we have to bound $\mathcal{F}_S(s) -\frac{1}{\ell} \mathcal{F}_U(s)$ on the aforementioned line.
We can rewrite $\mathcal{M}_{j,k} (s,v_0,\mathrm{split})$ as
\begin{equation*}
    \mathcal{G}(s)
    \prod_{r=1}^{\ell-1}
    \prod_{v \neq v_0} \left( 1 -  \xi_\ell^{ r j \deg v} \chi_{v, \ell}(v_0)^{rk} N(v)^{-(\ell-1)s} \right)^{-1},
\end{equation*}
where the function $\mathcal{G}(s)$ converges absolutely for $\re(s) > \frac{1}{2(\ell-1)} + \varepsilon$, and it is uniformly bounded in that region.
Hence our task is reduced to bounding the L-functions
\begin{equation*}
    L_{j,k}(s) =  \prod_{v \neq v_0} \left( 1 -  \frac{ \xi_\ell^{j \deg v} \chi_{v, \ell}(v_0)^k }{ N(v)^{(\ell-1)s} } \right)^{-1},
\end{equation*}
on the line $\re(s) = \frac{1}{2(\ell-1)} + \varepsilon$. The $L_{j,k}(s)$ ($0\leq j,k\leq \ell -1$)
are Dirichlet L-functions associated to some character $\chi$ of modulus $v_0$ and order $\ell$ and we need to evaluate them at $s = 1/2 + \varepsilon + i t$.
Indeed, if $\xi$ be any root of unity and we write $\xi = q^{- i \theta}$, then we have
\begin{equation*}
    \prod_{v} \left( 1 - \frac{\chi(v) \xi^{\deg{v}}}{N(v)^s} \right)^{-1} = \prod_{v} \left( 1 - \chi(v) q^{-(s+ i \theta) \deg{v}} \right)^{-1} = L(s + i \theta, \chi).
\end{equation*}

In the following theorem we prove that the Lindel\"{o}f Hypothesis is true for the L-functions $L(s, \chi)$ associated with non-trivial Dirichlet characters of $\F_q[X]$.
There are two main ingredients in our proof, the Riemann Hypothesis and~\cite[Theorem 8.1]{carneiro2010some}, an Erd\"{o}s--Tur\'{a}n-type inequality, proved by Carneiro and Vaaler, bounding the size of polynomials inside the unit circle.
This approach was suggested to us by Soundararajan who used the same approach in a paper in collaboration with Chandee~\cite{chandee-sound} to get similar bounds for $\zeta(\frac{1}{2} + it)$. We are very thankful for his suggestion and his help.
There are other bounds in the literature for $\log \left| L(1/2+it, \chi_{v_0})  \right|$, for example the bound proved by Altung and Tsimerman in \cite[p.45]{altug-tsimerman}
$$
\log \left| L(1/2, \chi_{v_0})  \right| \leq \frac{2g}{\log_q{(g)}} + 4q^{1/2} g^{1/2},
$$
where $q$ is prime. Then, the bound below improves the constant from 1 to the optimal constant $\frac{\log{2}}{2}$ (we recall that $d = 2g + 2$ for hyperelliptic curves).
Very  recently, similar bounds with the constant $\frac{\log{2}}{2}$ were obtained by Florea \cite[Corollary 8.2]{florea-four}  using a different proof based in similar ideas, inspired by the work of Carneiro and Chandee \cite{CC}.  Her proof also allows her to get better bounds for $\log \left| L(\alpha+it, \chi_{v_0})  \right|$ for $\alpha \geq 1/2$ (the L-function gets smaller as one moves away the critical line).

\begin{theorem}\label{thm:lh}
  Let $v_0$ be a finite place of $\FF_q(X)$ and denote by $\chi_{v_0}$ be the $\ell$-th power residue symbol, which is a Dirichlet character of modulus $v_0$.
Let $d$ be the degree of the conductor of the character, and
    let $L(s, \chi_{v_0})$ be the L-function attached to $\chi_{v_0}$.
    For any $s = \sigma + it$ with $\sigma \geq 1/2$, we have as $d \rightarrow \infty$,
    \begin{equation} \label{L1}
        \log \left| L(s, \chi_{v_0})  \right| \leq \left( \frac{\log 2}{2} + o(1) \right) \frac{d}{\log_q{d}}.
    \end{equation}
   Hence, for any  $\varepsilon > 0$, we have
    \begin{equation} \label{L2}
        L(s,  \chi_{v_0}) \ll_{q, \varepsilon} \left( q^d \right)^\varepsilon.
    \end{equation}
\end{theorem}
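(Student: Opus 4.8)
The plan is to follow the Chandee--Soundararajan strategy in the function field setting. First I would use the Riemann Hypothesis for $L(s,\chi_{v_0})$: since $\chi_{v_0}$ is a non-trivial Dirichlet character of conductor of degree $d$, the completed $L$-function is a polynomial $L(u,\chi_{v_0}) = \prod_{j=1}^{d-1}(1 - \alpha_j u)$ in $u = q^{-s}$ with $|\alpha_j| = q^{1/2}$ (up to the factor coming from the place at infinity, which I would handle separately). Taking logarithms, $\log|L(s,\chi_{v_0})|$ becomes a sum over the inverse roots, which I would rewrite in terms of $\log|1 - e^{i(\theta_j - \phi)}|$ where $q^{-s} = q^{-\sigma}e^{-i\phi}$ and the $\alpha_j = q^{1/2}e^{i\theta_j}$. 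The key point is that on the critical line $\sigma = 1/2$ this is (a constant times) a sum of $\log$ of distances from a point on the unit circle to the $d-1$ points $e^{i\theta_j}$, and for $\sigma > 1/2$ the quantity is smaller, so it suffices to treat $\sigma = 1/2$.

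The main tool is then~\cite[Theorem 8.1]{carneiro2010some}: an Erd\H{o}s--Tur\'an-type inequality of Carneiro and Vaaler that bounds $\sum_j \log|z - e^{i\theta_j}|$ for $z$ on the unit circle in terms of the degree $d$ and a parameter (the degree of an auxiliary trigonometric polynomial used as a majorant). Plugging this in, one gets a bound of the shape $\log|L(1/2+it,\chi_{v_0})| \le \frac{\log 2}{2}\cdot\frac{d}{K} + O(\text{something growing with }K)$, where $K$ is the free parameter. Optimizing $K$ — I expect the natural choice is $K \asymp \log_q d$, balancing the linear-in-$1/K$ main term against the error — yields~\eqref{L1}. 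The constant $\frac{\log 2}{2}$ comes out of the extremal problem solved in~\cite{carneiro2010some}, exactly as in~\cite{chandee-sound}. Finally, \eqref{L2} is immediate: $\log_q(q^d)^\varepsilon = \varepsilon d$, and $\frac{d}{\log_q d} = o(d)$, so the bound~\eqref{L1} gives $\log|L| = o(\varepsilon d \log q)$ hence $|L| \ll_{q,\varepsilon} (q^d)^\varepsilon$.

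I would organize the steps as: (i) set up the polynomial form of $L(u,\chi_{v_0})$ via RH, reducing to a sum over unit-circle points; (ii) reduce the case $\sigma \ge 1/2$ to $\sigma = 1/2$ by a monotonicity/positivity argument on each factor $|1 - \alpha_j q^{-s}|$; (iii) quote and apply the Carneiro--Vaaler inequality with a free degree parameter $K$; (iv) optimize $K = (1+o(1))\log_q d$; (v) deduce~\eqref{L2}.

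The step I expect to be the main obstacle is (iii)--(iv): correctly importing~\cite[Theorem 8.1]{carneiro2010some} with the right normalization so that the extremal constant really is $\frac{\log 2}{2}$ and not some other numerical value, and carefully tracking the secondary terms (including the contribution of the place at infinity and any boundary effects at $\sigma$ slightly above $1/2$) so that they are genuinely absorbed into the $o(1)$ when $K$ is chosen optimally. A secondary technical point is making sure the argument is uniform in $t$ and in the place $v_0$ (only the conductor degree $d$ should enter), since that uniformity is exactly what is needed in Sections~\ref{sec:hyperelliptic} and~\ref{sec:general-ell}.
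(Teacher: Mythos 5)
Your overall strategy coincides with the paper's: use the Riemann Hypothesis to write $L(u,\chi_{v_0})$ as a polynomial whose inverse roots $q^{1/2}e^{i\theta_j}$ give, after evaluating at $u=q^{-s}$ with $\sigma\ge 1/2$, a product $\prod_j(1-\alpha_j)$ with $|\alpha_j|\le 1$, and then apply the Carneiro--Vaaler inequality \cite[Theorem 8.1]{carneiro2010some} with a free parameter. Two points need attention. First, your proposed reduction of $\sigma\ge 1/2$ to $\sigma=1/2$ by factor-by-factor monotonicity is false: a factor $|1-q^{1/2-\sigma}e^{i\psi}|$ with $\psi$ close to $0$ \emph{increases} as $\sigma$ grows past $1/2$. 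The reduction is also unnecessary, since the Carneiro--Vaaler bound
\begin{equation*}
\sup_{|z|\le 1}\log|F_M(z)|\le \log 2\,\frac{M}{N+1}+\sum_{n=1}^{N}\frac1n\Bigl|\sum_{m=1}^{M}\alpha_m^n\Bigr|
\end{equation*}
holds for arbitrary $\alpha_m$ in the closed unit disk; the paper applies it directly to the points $\alpha_j=q^{1/2-\sigma}e^{i(\theta_j-t\log q)}$.

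Second, and more substantively, you leave the secondary term as ``something growing with $K$'' and propose to optimize at $K=(1+o(1))\log_q d$; with that choice the main term $\log 2\cdot d/(K+1)$ yields the constant $\log 2$, not $\tfrac{\log 2}{2}$. The missing ingredient is the estimate of the power sums appearing in the Carneiro--Vaaler bound: taking the logarithmic derivative of the Euler product (the explicit formula) gives
\begin{equation*}
\sum_{j}e^{in\theta_j}=-q^{-n/2}\sum_{\deg v\mid n}\deg v\,\bigl(\chi_{v_0}(v)\bigr)^{n/\deg v},
\end{equation*}
whence $\bigl|\sum_j e^{in\theta_j}\bigr|\le 1+q^{n/2}$ by the prime number theorem identity $q^n=\sum_{e\mid n}e\,\pi(e)$. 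The secondary term is therefore $\ll N+q^{N/2}/N$, which remains $o(d/\log_q d)$ for $N$ as large as $(2-o(1))\log_q d$; it is precisely this choice $N=\lfloor(2-f(d))\log_q d\rfloor$ with $f(d)\to 0$ slowly (the paper takes $f(d)=\log_q\log_q d/\log_q d$) that halves the constant to $\tfrac{\log 2}{2}$. Your deduction of \eqref{L2} from \eqref{L1} is fine, and the uniformity in $t$ and $v_0$ that you worry about is automatic, since the power-sum bound depends only on $d$.
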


\begin{proof}
    Let $v_0(X)$ be the polynomial of $\F_q[X]$ corresponding to the place $v_0$, and consider the curve
    \begin{equation*}
        C_{v_0} : Y^\ell = v_0(X).
    \end{equation*}
    Let $g$ be the genus of the curve. Then, $d-2 = 2g/(\ell-1)$, and the zeta function of the curve $C_{v_0}$ writes as
    \begin{equation*}
        Z_{C_{v_0}}(u) = \frac{\prod_{j=1}^{2g} \left( 1 - u e^{i \theta_{j}} \right)}{(1-u)(1-qu)} =
        \frac{\prod_{k=1}^{\ell-1} L(u, \chi_{v_0}^k)}{(1-u)(1-qu)}
        ,
    \end{equation*}
    where
    \begin{equation*}
        L(u, \chi_{v_0}^k) = \prod_{j=1}^{2g/(\ell-1)}
       \left( 1 - \sqrt{q} e^{i\theta_{k,j}} u\right),
    \end{equation*}
    renaming the roots of $Z_{C_{v_0}}(u)$.

    Without loss of generality, take $k=1$ and rewrite
 \begin{equation*}
        L(u, \chi_{v_0}) = \prod_{j=1}^{d-2}
       \left( 1 - \sqrt{q} e^{i\theta_{j}} u\right).
    \end{equation*}
    Evaluating at $u=q^{-s}$ for $s=\sigma+it$ with $\sigma \geq 1/2$, we have
    \begin{equation}
        \label{eqn:lchi}
        L(s, \chi_{v_0}) = \prod_{j=1}^{d-2} \left( 1 - e^{i (\theta_j- t\log{q})} q^{1/2-\sigma} \right).
    \end{equation}
    We consider the polynomial
    \begin{equation*}
        F(z) = \prod_{j = 1}^{d-2} \left(z -  e^{i (\theta_j- t\log{q})} q^{1/2-\sigma} \right),
    \end{equation*}
    and we notice that all $\alpha_j = q^{1/2-\sigma} e^{i (\theta_j- t\log{q})}$ are such that $|\alpha_j| \leq 1$ since $\sigma \geq 1/2$.
    We now use~\cite[Theorem 8.1]{carneiro2010some} which says that for
    \begin{equation*}
        F_M(z)= \prod_{m=1}^M (z - \alpha_m)
    \end{equation*}
    where $| \alpha_m | \leq 1$ for $1 \leq m \leq M$, we have  for any positive integer $N$ that
    \begin{align} \label{CV}
        \sup_{|z| \leq 1} \log | F_M(z) | \leq \log{2} \frac{M}{N+1}  + \sum_{n=1}^N \frac{1}{n} \left| \sum_{m=1}^M \alpha_m^n \right|.
    \end{align}

    We then have to evaluate the sums of powers
    \begin{equation*}
    \sum_{j=1}^{d-2} \alpha_j^n \ll \sum_{j=1}^{d-2} e^{i n \theta_j}.\end{equation*}

    Taking the logarithm derivative on both sides of~\eqref{eqn:lchi} (similarly to the
    proofs of Lemma~\ref{lemma:hyperelliptic} and~\ref{lemma:traces-ell}), we derive the following identity for $n \geq 1$
    \begin{equation*}
        \sum_{j=1}^{d-2} e^{i n \theta_{j}}
        =
        - q^{-\frac{n}{2}} \sum_{\deg v \divides  n} \deg v \left(\chi_{v_0}  (v) \right)^{\frac{n}{\deg v}},
    \end{equation*}
    where the sum is over all places $v$ of $\FF_q(X)$ (including the place at infinity).
    Therefore, using\eqref{eqn:pnt},
    \begin{align*}
        \left |  \sum_{j=1}^{d-2} e^{i n \theta_{k,j}} \right|
        & = q^{-\frac{n}{2}} \left( 1 + q^n \right) \leq 1 + q^{\frac{n}{2}}
    \end{align*}

    Replacing the bound above in~\eqref{CV} with $M = 2g/(\ell-1) = d-2$, we get
    \begin{align*}
        \sup_{|z| \leq 1} \log | F(z) |
        &\leq \log{2} \frac{d-2}{N + 1}  + \sum_{n=1}^{N} \frac{1+q^{\frac{n}{2}}}{n}\\
        & \leq \log{2} \frac{d-2}{N + 1} + \log 2 N + 2 \frac{q^{\frac{N}{2}}}{N}
    \end{align*}
    The theorem follows by taking $N = \lfloor \left(2 - f(d) \right) \log_q d \rfloor$, where $f(d)$ is any positive function $f(d)$ such that $f(d) = {o}(1)$ and $e^{- f(d) \log_q{d}} = {o}(1)$, for example $f(d) = \frac{\log_q{\log_q{d}}}{\log_q{d}}$.
    Without loss of generality assume $N > 0$, and we have
    \begin{align*}
        \sup_{|z| \leq 1} \log | F(z) |  &\leq  \log{2} \; \frac{d}{(2 - f(d)) \log_q d} + {o} \left( \frac{d}{\log_q{d}} \right) \\
        & \leq \left( \frac{\log 2}{2} + {o}(1) \right) \frac{d}{\log_q d},
    \end{align*}
    which shows \eqref{L1}, and \eqref{L2} follows.
\end{proof}

\bibliographystyle{alpha}
\bibliography{biblio}

\newcommand{\etalchar}[1]{$^{#1}$}
\begin{thebibliography}{BDF{\etalchar{+}}16}

\bibitem[AT14]{altug-tsimerman}
Salim~Ali Altug and Jacob Tsimerman.
\newblock Metaplectic ramanujan conjecture over function fields with
  applications to quadratic forms.
\newblock {\em International Mathematics Research Notices}, No. 13:3465--3558,
  2014.

\bibitem[BDF{\etalchar{+}}16]{BDFKLOW}
Alina Bucur, Chantal David, Brooke Feigon, Nathan Kaplan, Matilde Lalin, Ekin
  Ozman, and Melanie Wood.
\newblock The distribution of points on cyclic covers of genus g.
\newblock {\em International Mathematics Research Notices}, No. 14:4297--4340,
  2016.

\bibitem[BF14]{Bui-Florea}
H.~M. Bui and Alexandra Florea.
\newblock Zeros of quadratic dirichlet l-functions in the hyperelliptic
  ensemble.
\newblock {\em Preprint}, 2014.
\newblock {\tt arXiv:1605.07092}.

\bibitem[BG01]{bg}
Bradley~W. Brock and Andrew Granville.
\newblock More points than expected on curves over finite field extensions.
\newblock {\em Finite Fields Appl.}, 7(1):70--91, 2001.
\newblock Dedicated to Professor Chao Ko on the occasion of his 90th birthday.

\bibitem[CC11]{CC}
Emmanuel Carneiro and Vorropan Chandee.
\newblock Bounding $\zeta(s)$ in the critical strip.
\newblock {\em Journal of Number Theory}, 131:363--384, 2011.

\bibitem[Chi15]{chinis}
Iakovos~Jake Chinis.
\newblock Traces of high powers of the frobenius class in the moduli space of
  hyperelliptic curves.
\newblock {\em Preprint}, 2015.
\newblock {\tt arXiv:1510.06350}.

\bibitem[CS11]{chandee-sound}
Vorrapan Chandee and K.~Soundararajan.
\newblock Bounding {$\vert \zeta(\frac12+it)\vert $} on the {R}iemann
  hypothesis.
\newblock {\em Bull. Lond. Math. Soc.}, 43(2):243--250, 2011.

\bibitem[CV10]{carneiro2010some}
Emanuel Carneiro and Jeffrey Vaaler.
\newblock Some extremal functions in fourier analysis. {II}.
\newblock {\em Transactions of the American Mathematical Society},
  362(11):5803--5843, 2010.

\bibitem[DS94]{eigenvalues}
Persi Diaconis and Mehrdad Shahshahani.
\newblock On the eigenvalues of random matrices.
\newblock {\em J. Appl. Probab.}, 31A:49--62, 1994.
\newblock Studies in applied probability.

\bibitem[DW88]{datskovsky1988density}
Boris Datskovsky and David~J Wright.
\newblock Density of discriminants of cubic extensions.
\newblock {\em J. reine angew. Math}, 386:116--138, 1988.

\bibitem[Flo16]{florea-four}
Alexandra Florea.
\newblock The fourth moments of quadratic dirichelt l-functions over function
  fields.
\newblock {\em Preprint}, 2016.
\newblock {\tt arXiv:1609.01262}.

\bibitem[FPS16]{FPS}
Daniel Fiorilli, James Parks, and Anders S\"{o}dergren.
\newblock Traces of high powers of the frobenius class in the moduli space of
  hyperelliptic curves.
\newblock {\em Preprint}, 2016.
\newblock {\tt arXiv:1601.06833}.

\bibitem[FR10]{fr}
Dmitry Faifman and Ze{\'e}v Rudnick.
\newblock Statistics of the zeros of zeta functions in families of
  hyperelliptic curves over a finite field.
\newblock {\em Compos. Math.}, 146(1):81--101, 2010.

\bibitem[KR09]{KURU}
P{\"a}r Kurlberg and Ze{\'e}v Rudnick.
\newblock The fluctuations in the number of points on a hyperelliptic curve
  over a finite field.
\newblock {\em J. Number Theory}, 129(3):580--587, 2009.

\bibitem[KS99]{katz-sarnak}
Nicholas~M. Katz and Peter Sarnak.
\newblock {\em Random matrices, {F}robenius eigenvalues, and monodromy},
  volume~45 of {\em American Mathematical Society Colloquium Publications}.
\newblock American Mathematical Society, Providence, RI, 1999.

\bibitem[Ros02]{rosen2002number}
M.~Rosen.
\newblock {\em Number Theory in Function Fields}.
\newblock Graduate Texts in Mathematics. Springer, 2002.

\bibitem[Rud10]{rudnick}
Ze{\'e}v Rudnick.
\newblock Traces of high powers of the {F}robenius class in the hyperelliptic
  ensemble.
\newblock {\em Acta Arith.}, 143(1):81--99, 2010.

\bibitem[TX14]{thorne-xiong}
Frank Thorne and Maosheng Xiong.
\newblock Distribution of zeta zeroes for cyclic trigonal curves over a finite
  field.
\newblock {\em Preprint}, 2014.

\bibitem[Yan09]{yang2009distribution}
Andrew Yang.
\newblock {\em Distribution problems associated to zeta functions and invariant
  theory}.
\newblock Princeton University, 2009.
\newblock PhD Thesis under Peter Sarnak.

\bibitem[Zha]{Zhao}
Yongqiang Zhao.
\newblock On sieve methods for varieties over finite fields.
\newblock {\em Preprint}.

\end{thebibliography}

\end{document}